\newtheorem{thm}{Theorem}[section]
\newtheorem{prop}[thm]{Proposition}
\newtheorem{lemma}[thm]{Lemma}
\newtheorem*{mainthm*}{Main Theorem}
\newtheorem*{mainlemma*}{Main Lemma}
\newtheorem{cor}[thm]{Corollary}
\newtheorem*{conj*}{Conjecture}
\theoremstyle{definition}
\newtheorem{dfn}[thm]{Definition}
\newtheorem{constr}[thm]{Construction}
\newtheorem{ax}[thm]{Axiom}
\newtheorem{sys}[thm]{System}
\newtheorem*{que*}{Question}
\newtheorem*{mque*}{Main Question}
\newtheorem*{rem*}{Remark}
\newtheorem*{Copernican*}{Copernican Principle}
\newcommand{\lang}{\mathcal{L}}
\newcommand{\sent}{\mathrm{Sent}}
\newcommand{\crsmb}{\mathbf{crsm}}
\newcommand{\SSy}{\mathrm{SSy}}
\newcommand{\udot}[1]{\text{\d{${#1}$}}}
\newcommand{\gquote}[1]{\ulcorner #1 \urcorner}
\newcommand{\negd}{\mathop{\udot{\neg}}}
\newcommand{\wedged}{\mathrel{\udot{\wedge}}}
\newcommand{\foralld}{\mathop{\udot{\forall}}}
\newcommand{\df}{\mathrm{df}}
\newcommand{\var}{\mathrm{Var}}
\newcommand{\VA}{\mathrm{VA}}
\newcommand{\Con}{\mathrm{Con}}
\newcommand{\R}{\mathrm{R}}
\newcommand{\Prv}{\mathrm{Pr}}
\newcommand{\Iso}{\mathrm{Iso}}
\newcommand{\restr}{\hspace{-4pt}\restriction}
\newcommand{\restrp}{\hspace{-4pt}\restriction + \hspace{2pt}}
\newcommand{\Modrm}{\mathrm{Mod}}
\newcommand{\Unirm}{\mathrm{Uni}}
\newcommand{\Trm}{\mathrm{T}}
\newcommand{\Sat}{\mathsf{Sat}}
\newcommand{\Tr}{\mathsf{Tr}}
\newcommand{\Mod}{\mathsf{Mod}}
\newcommand{\Uni}{\mathsf{Uni}}
\newcommand{\Unib}{\mathbf{Uni}}
\newcommand{\Tb}{\mathbf{T}}
\newcommand{\self}{\mathsf{self}}
\newcommand{\crsm}{\text{crsm}}
\newcommand{\Rev}{\mathrm{Rev}}
\newcommand{\dt}{\hspace{2pt}}
\newcommand{\PA}{\mathsf{PA}}
\newcommand{\ZF}{\mathsf{ZF}}
\newcommand{\ZFC}{\mathsf{ZFC}}
\newcommand{\CT}{\mathsf{CT}}
\newcommand{\FS}{\mathsf{FS}}
\newcommand{\CM}{\mathsf{CM}}
\newcommand{\GR}{\mathsf{GR}}
\newcommand{\SP}{\mathsf{SP}}
\newcommand{\MS}{\mathsf{MS}}
\newcommand{\GL}{\mathsf{GL}}
\newcommand{\HM}{\mathsf{HM}}
\newcommand{\ConTh}{\mathsf{Con}}
\newcommand{\RTh}{\mathsf{R}}
\newcommand{\NEC}{\mathsf{NEC}}
\newcommand{\CONEC}{\mathsf{CONEC}}
\newcommand{\Sep}{\mathsf{Sep}}
\newcommand{\Rep}{\mathsf{Rep}}
\title{The Copernican Multiverse of Sets}
\author{
Paul K. Gorbow and Graham E. Leigh \\
\\
	\small University of Gothenburg \\
	\small Department of Philosophy, Linguistics, and Theory of Science \\
	\small Box 200, 405 30 G\"OTEBORG, Sweden \\
}
\date{}                                           %
\begin{document}

\maketitle

\begin{abstract}
We develop an untyped framework for the multiverse of set theory. $\ZF$ is extended with semantically motivated axioms utilizing the new symbols $\Uni(\mathcal{U})$ and $\Mod(\mathcal{U, \sigma})$, expressing that $\mathcal{U}$ is a universe and that $\sigma$ is true in the universe $\mathcal{U}$, respectively. Here $\sigma$ ranges over the augmented language, leading to liar-style phenomena that are analysed. The framework is both compatible with a broad range of multiverse conceptions and suggests its own philosophically and semantically motivated multiverse principles. In particular, the framework is closely linked with a deductive rule of Necessitation expressing that the multiverse theory can only prove statements that it also proves to hold in all universes. We argue that this may be philosophically thought of as a {\em Copernican principle} that the background theory does not hold a privileged position over the theories of its internal universes. 

Our main mathematical result is a lemma encapsulating a technique for locally interpreting a wide variety of extensions of our basic framework  in more familiar theories. We apply this to show, for a range of such semantically motivated extensions, that their consistency strength is at most slightly above that of the base theory $\ZF$, and thus not seriously limiting to the diversity of the set-theoretic multiverse. We end with case studies applying the framework to two multiverse conceptions of set theory: arithmetic absoluteness and Joel D. Hamkins' multiverse theory.
\end{abstract}

\section{Introduction}

$\ZF$ set theory serves as a foundation for mathematics, but has also turned out to be interesting in itself as a field of mathematical study. Much of the interest lies in that it raises questions that are not only undecidable, but also lacking clear-cut intuitive answers and demanding deep mathematical developments. The continuum hypothesis is a primary historical example: It seems implausible to reach a consensus on affirming or denying it, and it motivated two techniques central to set theory: the inner model and forcing constructions. It is natural to view these techniques as enabling constructions of set-theoretic universes from other set-theoretic universes, thus taking a multiverse view of the subject matter of set theory, rather than adopting the universe view that there is a single absolute universe of sets. In the words of Hamkins, an advocate of the multiverse view (\citeyear[p. 418]{Ham12}): 

\begin{displayquote}
A large part of set theory over the past half-century has been about constructing as many different models of set theory as possible /.../ Would you like to live in a universe where CH holds, but $\Diamond$ fails? Or
where $2^{\aleph_n} = \aleph_{n+2}$ for every natural number $n$? Would you like to have rigid Suslin trees?
Would you like every Aronszajn tree to be special? Do you want a weakly compact cardinal $\kappa$ for which $\Diamond_\kappa(\mathrm{REG})$ fails? Set theorists build models to order. 
\end{displayquote}

Hamkins follows this perspective on set-theoretic practice with his argument for adopting the multiverse view:

\begin{displayquote}
This abundance of set-theoretic possibilities poses a serious difficulty for the universe view, for if one holds that there is a single absolute background concept of set, then one
must explain or explain away as imaginary all of the alternative universes that set theorists seem to have constructed. This seems a difficult task, for we have a robust experience in those worlds, and they appear fully set theoretic to us. The multiverse view, in contrast, explains this experience by embracing them as real, filling out the vision hinted at in our mathematical experience, that there is an abundance of set-theoretic worlds into which our mathematical tools have allowed us to glimpse.
\end{displayquote}

Methodologically, it makes sense to represent the universes of sets as models of a theory of sets, as that makes them accessible to the well-developed techniques of model theory. This raises two questions: 
\begin{enumerate}
\item Which set theory?
\item Which models of that set theory?
\end{enumerate}

In answer to the first question the authors have decided on limiting scope to $\ZF$. This is the most utilized set theory, established in the mathematical community as a robust foundation of mathematics. All of the results of this paper go through for extensions of $\ZF$ (axiom of choice, large cardinals, $\dots$). Although one development of this paper uses full $\ZF$,\footnote{The  proof of Theorem \ref{Thm: rec sat iso} requires full $\ZF$. This is used in the proof of Theorem \ref{Thm: CM + NEC + self-iso}, a consistency result.} the authors conjecture that for most of the results much weaker fragments suffice.

On the second question this paper takes a liberal approach. In particular, scope is not limited to well-founded models. The framework is intended to be applicable to a wide range of multiverse conceptions, for example the conceptions that every universe is of the form $V_\alpha$, that every universe is well-founded, that every universe is countable and recursively saturated (and therefore ill-founded), or that every model is a universe. 

Why consider ill-founded universes? There is a sense in which a model $\mathcal{M}$ of set theory can be situated as an element in two different models $\mathcal{N}_0$ and $\mathcal{N}_1$ of set theory, such that $\mathcal{N}_0$ satisfies that $\mathcal{M}$ is well-founded while $\mathcal{N}_1$ satisfies that $\mathcal{M}$ is ill-founded.\footnote{Using Definition \ref{Dfn: externalization}, the precise sense is that $\mathcal{M} = \mathcal{A}_{\mathcal{N}_0} = \mathcal{B}_{\mathcal{N}_1}$, for some $\mathcal{A} \in \mathcal{N}_0$ and $\mathcal{B} \in \mathcal{N}_1$, such that $\mathcal{N}_0$ satisfies that $\mathcal{A}$ is well-founded while $\mathcal{N}_1$ satisfies that $\mathcal{B}$ is ill-founded.} Thus, we may think of the property of well-foundedness as depending on the set-theoretic background. In Hamkins's multiverse conception this is a key feature \cite[p. 438-9]{Ham12}:

\begin{displayquote}
The concept of well-foundedness [\dots] depends on the set-theoretic background, for different models of set theory can disagree on whether a structure is well-founded. [\dots] Indeed, every set-theoretic argument can take place in a model, which from the inside appears to be totally fine, but actually, the model is seen to be ill-founded from the external perspective of another, better model. Under the universe view, this problem terminates in the absolute set-theoretic background universe, which provides an accompanying absolute standard of well-foundedness. But the multiverse view allows for many different set-theoretic backgrounds, with varying concepts of the well-founded, and there seems to be no reason to support an absolute notion of well-foundedness.
\end{displayquote}

To approach the set-theoretic multiverse mathematically, we need a foundational theory to situate the universes in. Just as the foundational background theory of $\ZF$ is useful for studying groups and topological spaces, it is useful for studying set-theoretic universes. So we find ourselves in a situation of studying models of $\ZF$ from the background theory $\ZF$. The multiverse theorist may extend the background theory of $\ZF$ to a multiverse theory (in an expanded language), with axioms specifying properties of the multiverse. In such a background multiverse theory, it is natural to consider the universes as themselves being models of multiverse theories, having their own internal universes, and so on. This raises an important question:

\begin{mque*}\label{Qu: Main}
What is the relationship between the external universe of the background multiverse theory, and the universes internal to the background theory? Similarly, what is the relationship between each universe and the universes within that universe?
\end{mque*}

We shall investigate several responses to the Main Question. Most fundamentally, the authors propose that the background multiverse theory obeys the following principle:

\begin{Copernican*}
The background theory of the multiverse should not have a privileged position compared to the multiverse theories of the internal universes; specifically, if the background multiverse theory proves a statement, it should also prove that holds in all universes.
\end{Copernican*}

We have an analogy with the heliocentric model of the solar system: Earth corresponds to the universe of the background multiverse theory, as the basis for our point of view. The geocentric model gives earth a privileged central position as an absolute reference point, while the heliocentric model puts earth on a par with all of the planets. So the heliocentric model differs from the geocentric model in that it obeys the principle that for any appropriately fundamental assumption we make about our point of view, we are committed to making the same assumption about every other plausible point of view. Similarly, in the context of set theory, the authors propose the Copernican Principle as the constraint that for every assumption introduced by a multiverse theorist, s/he is committed to that it holds from the vantage point of an arbitrary universe of sets. The name is borrowed from a modern principle in physics, which Peacock states as ``that humans are not privileged observers''. Peacock applies the principle arguing ``if the universe appears isotropic about our position, it would also appear isotropic to observers in other galaxies'' (\citeyear[p. 66]{Pea98}). So for the physicist, the principle is a conceivably falsifiable statement about the uniformity of the physical universe; while for the theorist of the multiverse of sets, it is an a priori postulate. Below we explicate a formal deductive rule, $\NEC$, expressing this principle.

To approach the Main Question, we require a framework that makes sense of the notion of truth-in-a-universe. If the universes are mere models of $\ZF$, then the usual satisfaction-relation expressed in the language of set theory suffices. But as soon as we consider each universe to contain a multiverse in its own right, it is more natural to consider the universes as structures in the language of the multiverse theory.

The main contribution of this paper is an untyped framework for handling the notion of truth-in-a-$\ZF$-universe. It is intended to be applicable to just about any theory of the multiverse of sets. A primitive predicate $\Uni(\mathcal{U})$ is introduced to express that $\mathcal{U}$ is a universe, and the primitive relation $\Mod(\mathcal{U}, \sigma)$ is introduced to express that the $\lang_{\Uni, \Mod}$-statement $\sigma$ is true in the universe $\mathcal{U}$, where $\lang_{\Uni, \Mod}$ is the language of set theory, $\lang$, augmented with the symbols $\Uni$ and $\Mod$. The multiverse theories considered in this paper are expressed in $\lang_{\Uni, \Mod}$.

\subsection{Multiverse principles}\label{Subsec: Multiverse principles}

Now that the language $\lang_{\Uni, \Mod}$ and the intuitive intended meaning of its symbols has been briefly explained, the next task is to give natural and useful axioms for $\Uni$ and $\Mod$. Axioms for $\Uni$ specify what universes comprise the multiverse, what closure properties it satisfies, etc.; for example the multiverse axioms of \cite{Ham12}. In this paper we are focused on semantically motivated axioms, meant to be applicable to a wide range of multiverse conceptions. The application of this framework to Hamkins's multiverse is discussed in \S \ref{Subsec: Hamkins's multiverse}. Since $\Mod$ is an untyped semantic relation, it is not surprising that it is exposed to liar-style phenomena. Our Theorem \ref{Thm: liar} shows, e.g., that the schema $\big( \forall \mathcal{U} \dt (\Uni(\mathcal{U}) \rightarrow \Mod(\mathcal{U}, \gquote{\sigma}) \big) \rightarrow \sigma$, over $\lang_{\Uni, \Mod}$-statements $\sigma$, expressing that whatever holds in every universe also holds in the background universe, is inconsistent with a natural and mild theory in $\lang_{\Uni, \Mod}$. However, this contradiction is not derivable when this schema is restricted to $\lang$.

The basic theory introduced is called $\CM^-$ (Compositional satisfaction for the Multiverse).\footnote{The precise specification of $\CM^-$ is given in System \ref{Sys: CM}.} $\CM^-$ is formed by adding compositional semantically motivated $\lang_{\Uni, \Mod}$-axioms to the background theory $\ZF$, for each logical connective and quantifier, and by extending the Separation and Replacement schemas of $\ZF$ to $\lang_{\Uni, \Mod}$. For example, the compositional axiom for $\wedge$ is
\[
\text{if $\theta \in \lang_{\Uni, \Mod}$ is the conjunction of $\phi$ and $\psi$, then $\Mod(\mathcal{U}, \theta) \iff \Mod(\mathcal{U}, \phi) \wedge \Mod(\mathcal{U}, \psi)$.}
\] 
These are also called the {\em Tarskian laws of satisfaction}. The analogue in the present framework of the well-known Tarskian schema $\Tr(\gquote{\sigma}) \leftrightarrow \sigma$ (for $\sigma \in \lang$) would say roughly that $\sigma$ is true in every universe if, and only if, it holds in the background universe. So this would say that the multiverse is not very diverse, and certainly not closed under forcing, for example. But analogues of $\Tr(\gquote{\sigma}) \rightarrow \sigma$ (for $\sigma \in \lang$) and the rule of Necessitation $\vdash \sigma \Rightarrow \dt \vdash \Tr(\gquote{\sigma})$ (for $\sigma \in \lang_{\Uni, \Mod}$) are highly relevant. 

In $\CM^-$ we can prove the soundness principle that the set of statements true in any particular universe is deductively closed. $\CM$ is $\CM^-$ plus an axiom called $\textnormal{\sf Multiverse}_\ZF$ saying that every universe satisfies $\ZF$, which is just intended to set the scope of the present treatment. (For most of the results, the authors believe that natural generalizations to weak fragments of $\ZF$ are possible.) Theorem \ref{Thm: multiverse conservativity} shows that an extension of $\CM$ interpreting the G\"odel-L\"ob modal logic is conservative over $\ZF$. So by the soundness principle, $\textnormal{\sf Multiverse}_\ZF$, and G\"odel's second incompleteness theorem, $\CM$ does not prove the statement $\exists \mathcal{U} \dt \Uni(\mathcal{U})$, saying that there exists a universe. 

A flexible revision-semantic technique for expanding models of the background theory $\ZF$ to models of extensions of $\CM^-$ is developed. This technique builds on ideas from \cite{Gup82}, \cite{Her82a} and \cite{Her82b}, for circumventing truth-theoretic paradoxes. In short, one starts by setting parameters specifying the particular multiverse conception desired. Among other things, this pins down the interpretation of $\Uni$. Then the interpretation of $\Mod$ is determined by a revision-semantic process. Intuitively, a basic definition of truth-in-a-universe is supplied among the parameters, and this definition is revised step-by-step to more adequate definitions. Theorems \ref{Thm: Con^omega interprets NEC + Non-triv} and \ref{Thm: R^omega interprets T} show that some natural settings of the parameters lead to that further semantically motivated axioms and deductive rules are validated in the constructed model, more on this further below.

We introduce several axioms and deductive rules in response to the Main Question. The most fundamental such principle for this framework is the deductive rule of Necessitation, $\NEC$, which is a formal expression of the Copernican Principle: 
\[
\text{If $\sigma$ is provable, then $\forall \mathcal{U} \dt (\Uni(\mathcal{U}) \rightarrow \Mod(\mathcal{U}, \gquote{\sigma}))$ is provable,}
\]
where $\gquote{\sigma}$ is the G\"odel code of $\sigma$. Under mild assumptions on the parameters, $\NEC$ is validated in the revision-semantic model construction. Theorem \ref{Thm: GL interpretation} shows that $\CM + \NEC$ is conservative over $\ZF$.

Dually, the deductive rule of Co-Necessitation, $\CONEC$, states: 
\[
\text{If $\forall \mathcal{U} \dt (\Uni(\mathcal{U}) \rightarrow \Mod(\mathcal{U}, \gquote{\sigma}))$ is provable, then $\sigma$ is provable.}
\]
In the context of $\NEC$ as formalizing the Copernican Principle, $\CONEC$ may be thought of as expressing that the theory is maximal within the bounds of the Copernican Principle. On the other hand, as a stand-alone principle, $\CONEC$ can be used to boost the expressive power: For example, we will consider $\CM$ extended by $\CONEC$ and the statement that no universe satisfies a $\Sigma^0_1$-statement that does not already hold in the standard model of arithmetic in the background theory; in other words, a Turing machine that does not halt in the background theory, halts in no universe. In $\CM$ extended with this axiom we can use basic model-theoretic considerations to prove that every universe satisfies the Reflection schema iterated $\omega^\textrm{CK}_1$ times over $\ZF$.\footnote{See System \ref{Sys: Reflection} for the definition of this theory.} Now, by adding $\CONEC$ we can prove $\omega^\textrm{CK}_1$-iterated Reflection schema over $\ZF$ outright in the background theory. So in general, $\CONEC$ enables outright proofs of statements that are provably satisfied across a model-theoretically delimited multiverse, thus in some sense ``extracting the deductively accessible content'' of higher-order non-recursive properties. 

We write $\MS$ (Multiverse theory of Satisfaction) for the theory $\CM + \NEC + \CONEC$. This theory is analogous to the the Firedman---Sheard theory of truth ($\FS$) from \cite{FS87}. A revision-semantic technique for constructing models of $\CM + \NEC \text{ and/or } \CONEC$ (building on a technique from the aforementioned two papers) is embodied in the Main Lemma (in \S \ref{Sec: Interpret}) and its Corollary \ref{Cor: T interprets MS}.

We now proceed to discuss three axioms motivated by the Main Question that have a reflective character in that they assert that the background universe is in some sense reflected in the multiverse. We will establish bounds on the consistency strength of these in terms of iterated reflection principles. The reader is referred to Systems \ref{Sys: GR omega} and \ref{Sys: Reflection} for the definition of these principles.

A very basic multiverse axiom is $\textnormal{\sf Non-Triviality}$, $\exists \mathcal{U} \dt \Uni(\mathcal{U})$, saying that there is a universe. In the presence of $\NEC$, this also yields that every universe contains a universe, and so on. We show in Theorem \ref{Thm: Con^omega interprets NEC + Non-triv} (and in Theorem \ref{Thm: multiverse conservativity}) that $\CM + \textnormal{\sf Non-Triviality} + \NEC$ is locally interpreted in (and conservative over) the theory of iterated consistency over $\ZF$.

A stronger axiom motivated by the Main Question, called $\textnormal{\sf Self-Perception}$, expresses that the background universe is isomorphic (over the set theoretic language) to one of the internal universes. This embodies the idea that the universe of the background theory should also be available in its internal multiverse, and has a distinct reflective character. It turns out to be convenient to take the universes to be countable recursively saturated models when modelling $\CM + \textnormal{\sf Self-Perception} + \NEC$, a phenomenon that corresponds to the multiverse model of Gitman and Hamkins in (\citeyear{GH10}). This suggests that their multiverse theory would harmonize well with $\CM + \textnormal{\sf Self-Perception} + \NEC$, a hypothesis we explore briefly in subsection \ref{Subsec: Hamkins's multiverse}. In Theorem \ref{Thm: CM + NEC + self-iso}, we use the revision-semantic technique to interpret $\CM + \textnormal{\sf Self-Perception} + \NEC$ in the theory of $\omega$-iterated Global Reflection over $\ZF$. The latter is a natural untyped theory of truth, that mildly strengthens $\ZF$. All universes are countable recursively saturated in this interpretation.

$\textnormal{\sf Self-Perception}$ is closely related to the notion of condensible models studied by Enayat in (\citeyear{En20}). The definition of condensability is somewhat technical, involving an infinitary language: A model $\mathcal{M}$ of $\ZF$ is {\em condensible}, if there is some ordinal $\alpha \in \mathcal{M}$ such that $\mathcal{M} \cong \mathcal{M}(\alpha) \prec_{\mathbb{L}_\mathcal{M}} \mathcal{M}$, where $\mathcal{M}(\alpha)$ denotes the substructure of $\mathcal{M}$ of ranks below $\alpha$ and $\mathbb{L}_\mathcal{M}$ denotes the intersection of $\lang_{\omega_1, \omega}$ with the well-founded part of $\mathcal{M}$. In particular, Enayat positively answers a question that sprung from the present paper: Is there an $\omega$-standard model of $\ZF$ with unboundedly many ordinals $\alpha$ such that $\mathcal{M} \cong \mathcal{M}(\alpha) \prec \mathcal{M}$? Note that recursively saturated models are $\omega$-non-standard. Enayat's result means that the door also appears to be open for models of $\CM + \textnormal{\sf Self-Perception} + \NEC$ with $\omega$-standard universes.

We also introduce the axiom schema of $\textnormal{\sf Multiverse Reflection}$, stating for each sentence $\sigma$ in the language $\lang$ of set theory: $\big( \forall \mathcal{U} \dt (\Uni(\mathcal{U}) \rightarrow \Mod(\mathcal{U}, \gquote{\sigma})) \big) \rightarrow \sigma$. Over $\CM$, this principle is implied by $\textnormal{\sf Self-Perception}$ and implies $\textnormal{\sf Non-Triviality}$. Using the revision-semantic technique, we show in Theorem \ref{Thm: R^omega interprets T} (and Theorem \ref{Thm: multiverse conservativity}) that $\MS + \textnormal{\sf Multiverse Reflection}$ is locally interpreted in (and conservative over) the theory of $\omega$-iterated proof-theoretic reflection schema over $\ZF$.

The body of the paper ends with case studies, where we look at two independent multiverse conceptions through the lens of the framework we have developed. The first of these is a conception of the multiverse as being arithmetically absolute, in the sense that arithmetic truth does not vary across the multiverse. The second is a conception due to Hamkins which is fundamentally based on the principles that the multiverse is closed under the forcing and inner model techniques, and that every universe is countable and $\omega$-non-standard from the perspective of some other universe. We close with a concluding section reflecting on the contributions of the paper and the relevance of this framework for future research on the set-theoretic multiverse.

\section{Preliminaries}

\subsection{A term-calculus for representation of syntax} \label{subsection: Term-calculus}

We shall work with various recursively enumerable set theories, in languages obtained by adding finitely many new non-logical symbols to the usual language of set theory on the signature $\{\in\}$. We define a {\em set theory} to be any recursively enumerable system proving Mac Lane set theory (excluding Choice)\footnote{Its axioms are Extensionality, Null set, Pair, Union, Power set, Separation for $\Delta_0$-formulas, and Infinity.} 
or proving Kripke--Platek set theory with Infinity\footnote{Its axioms are Extensionality, Null set, Pair, Union, Separation for $\Delta_0$-formulas, Collection for $\Delta_0$-formulas, Foundation for $\Pi_1$-formulas, and Infinity.}, in a language with finitely many non-logical symbols including a term-calculus for arithmetic and G\"odel coding explained below. Our specific choice of set theories underlying this definition is somewhat arbitrary (even weaker theories may well suffice). Both of these theories are sufficient for constructing the structure of the natural numbers and implementing basic model theory; these are the important features for this paper.

Since we will be reasoning about syntactic objects, it is convenient to employ a G\"odel coding of syntax. Let $K$ be a language with finitely many non-logical symbols. In any set theory $T$ (in language $L$) under consideration, we can define the arithmetic functions needed to formulate a natural {\em G\"odel coding in $T$} of terms and formulas of $K$. Through the G\"odel coding, the ``grammatical structure'' of $K$ is coherently represented in $T$. The complicated details of this procedure are described in any rigorous account of G\"odel's incompleteness theorems. The gist is that for each syntactic object (symbol, term or formula) $s$ of $K$, there is a definable number $\gquote s$ in $L$ (the G\"odel code of $s$), which represents $s$ in $T$, and there are operations definable in $T$ corresponding to syntactic operations on such objects. The authors trust that the reader is familiar with this. 

It is customary in set theory to informally introduce defined constant, relation and function symbols to the language, in order to make the presentation more readable. For example, one may use a function symbol $+$, as if it belonged to the language and there was an axiom expressing that $+$ is addition on the finite von Neumann ordinals. In this paper we assume that a finite number of such symbols needed for arithmetic and G\"odel coding are already present in the language of every set theory, and that the appropriate axioms regulating them are available in every set theory. Here follows a semi-formal account of some of the main principles of this expanded language $L$ for a set theory $T$, also serving to specify the notation:
\begin{enumerate}
\item We have a constant $\underline{0}$ and function symbols $S, +, \times$ for the successor, addition and multiplication operations in arithmetic. For each $n \in \mathbb{N}$, $\underline{n}$ is shorthand for $S^n(\underline{0})$. 
\item Each variable, constant, relation or function symbol $s$ of $K$ is represented in $T$ by a numeral $\gquote{s}$ in $L$. \item Recursively, each term $f(t_1, \dots, t_n)$ of $K$ is represented by the term $\gquote{f}(\gquote{t_1}, \dots, \gquote{t_n})$ of $L$. Formally, the term $\gquote{f}(\gquote{t_1}, \dots, \gquote{t_n})$ is the result of applying a function symbol of $L$ to the numerals $\gquote{f}, \gquote{t_1}, \dots, \gquote{t_n}$. Moreover, $\gquote{f(t_1, \dots, t_n)}$ denotes a numeral that $T$ proves to equal $\gquote{f}(\gquote{t_1}, \dots, \gquote{t_n})$.
\item Each atomic formula $R(t_1, \dots, t_n)$ of $K$ is represented by the numeral $\gquote{R(t_0, \dots, t_n)}$ of $L$. Analogous remarks apply as in the case of terms described above.
\item The syntactic operations, standardly used to build up complex formulas from atomic formulas, are all available. For example:
\begin{enumerate}
\item $L$ has a function symbol $\negd$, such that for each $\phi$ in $K$, $\negd \gquote{\phi}$ represents $\neg \phi$. Moreover, $T$ proves that $\gquote{\neg \phi}=\negd \gquote{\phi}$.
\item $L$ has a function symbol $\wedged$, such that for each $\phi$ and each $\psi$ in $K$,  $\gquote{\phi} \wedged \gquote{\psi}$ represents $\phi \wedge \psi$. Moreover, $T$ proves that $\gquote{\phi \wedge \psi}=\gquote{\phi} \wedged \gquote{\psi}$.
\item $L$ has a function symbol $\foralld$, such that for each variable $v$ and each formula $\phi$ in $K$,  $\foralld \gquote{v} \dt \gquote{\phi}$ represents $\forall v \dt \phi$. Moreover, $T$ proves that $\gquote{\forall v \dt \phi}=\foralld \gquote{v} \dt \gquote{\phi}$.
\item For any $\phi$ in $K$, $\phi[t/x]$ denotes the formula obtained from $\phi$ by replacing each free occurrence of the variable $x$ by the term $t$ (if $t$ has variables, then their bound occurrences in $\phi$ are renamed as necessary). $L$ has a function symbol (written $-[-\udot{/}-]$) which represents this primitive recursive substitution operation. Moreover, $T$ proves that $\gquote{\phi(x)[y/x]} = \gquote{\phi(x)}[\gquote{y}\udot{/}\gquote{x}]$. Somewhat less formally, if $\phi$ has been introduced as $\phi(x)$, we may write $\phi(t)$ for the formula $\phi[t/x]$.
\end{enumerate}
\end{enumerate}
In the context of a set theory $T$ in a set-theoretic language $L$, $\Sigma^0_n$, $\Pi^0_n$ and $\Delta^0_n$ denote the usual arithmetic hierarchy as defined for $L$-formulas (all quantifiers are bounded to $\mathbb{N}$), up to equivalence in $T$. It is well-known that for any recursive system $S$, there is a $\Sigma^0_1$-formula $\Pr_{\udot S}$, representing $S$-provability in $T$. We write $\Con_{\udot S}$ for the sentence $\neg\Pr_{\udot S}(\gquote\bot)$, expressing that $S$ is consistent. In both cases, the dot under $S$ is sometimes omitted, when it can be inferred from the context.

As an example, consider this consequence of G\"odel's second incompleteness theorem: 
\[
\ZF \not\vdash \Con_{\udot{\ZF}}
\] 
From the perspective of the meta-theory, ``$\ZF$'' refers to a set of sentences (the object theory of $\ZF$) whereas ``$\udot{\ZF}$'' refers to a formula representing the recursive set of G\"odel codes of that set in the object theory $\ZF$.

Suppose now that a set theory $T'$ in language $L'$ is represented in a set theory $T$ in language $L$. Then $L'$ is G\"odel coded in $T$, as explained above. But $T'$, in turn, also G\"odel codes languages; say that $T'$ G\"odel codes the language $L''$. Note that the whole G\"odel coding of $L''$ in $T'$ is then carried along by the representation of $T'$ in $T$. For example, if $\phi$ is a formula in $L''$, then there is a term $\gquote \phi$ in $L'$ which represents $\phi$ in $T'$. If $\psi(x)$ is a formula of $L'$, we can then form the formula $\psi[\gquote \phi / x]$ of $L'$. This formula, in turn, is represented in $T$ by an $L$-term $\gquote{\psi[\gquote \phi / x]}$. So if $\theta(y)$ is an $L$-formula, we can form the $L$-formula $\theta[\gquote{\psi[\gquote \phi / x]} / y]$. Thus, G\"odel codes may be nested, as a set theory represents a set theory, which in turn represents a language.

\subsection{Miscellaneous logical preliminaries}\label{subsection: Misc prel}

$\lang$ is the language with the symbol ``$\in$'' along with a finite number of arithmetic and syntactic symbols as explained in Subsection \ref{subsection: Term-calculus}. We assume that $\ZF$ is formulated as an $\lang$-theory, with the natural axioms for defining the arithmetic and syntactic symbols of $\lang$. $\lang^+$ denotes any extension of $\lang$ with a finite number of new symbols.

If $L$ is a language and $S_1, \cdots, S_n$ are symbols, then $L_{S_1, \cdots, S_n}$ denotes the language obtained by augmenting $L$ with $S_1, \cdots, S_n$. The Separation schema applying to all formulas of a language $L$ is denoted $\Sep(L)$, and the Replacement schema applying to all formulas of a language $L$ is denoted $\Rep(L)$.

Any set theory suffices as meta-theory. Suppose that in the meta-theory we consider a definable set $A = \{x \mid \phi(x)\}$, such as a theory. We may then refer to the corresponding set within an object-theory, for example as follows: Using the symbol $A$ somewhat ambiguously, we write a statement of the form $\ZF \vdash \cdots \mathcal{M} \models \udot{A} \cdots$ for the more formally precise statement of the form $\ZF \vdash \cdots \exists X \dt \big( \forall x (x \in X \leftrightarrow \phi(x)) \wedge \forall x \in X \dt (\mathcal{M} \models x) \big) \cdots$. The dot under $A$ is occasionally omitted, when clear from the context. To illustrate, we might express a special case of G\"odel's completeness theorem within $T$ as $T \vdash \big( \Con_{{\ZF}} \leftrightarrow \exists \mathcal{M} (\mathcal{M} \models {\ZF}) \big)$. We say that a theory $T_1$ bounds the {\em consistency strength} of (or has at least as high consistency strength as) a theory $T_0$ if the consistency of $T_1$ implies the consistency of $T_0$.

An {\em interpretation} $\mathcal{I}$ from a language $L_0$ to a language $L_1$ is a function $\mathcal{I} : L_0 \rightarrow L_1$ which is generated (by structural recursion) from the non-logical symbols of $L_0$. Moreover, we say that $\mathcal{I}$ {\em interprets} or {\em validates} the $L_0$-system $T_0$ in the $L_1$-system $T_1$ if for any $L_0$-formula $\phi$, $T_1 \vdash \mathcal{I}(\phi)$ whenever $T_0 \vdash \phi$.

As default, we work with first-order languages and classical logic, but we will consider additional deductive rules (NEC and CONEC). $\phi \equiv \psi$ is the statement that $\phi$ and $\psi$ are identical formulas. If $S$ and $T$ are systems in languages both including $L$, then $S \equiv_L T$ is the statement that $S$ and $T$ have the same $L$-theorems. If $S$ is a system involving deductive rules, and $A$ is an axiom, then $S + A$ denotes the natural extension of $S$ in which these deductive rules may be applied to proofs also involving $A$. For example, in $\MS + \exists x \dt \Uni(x)$, we may use $\NEC$ to derive $\forall \mathcal{U} \in \Uni\dt \Mod(\mathcal{U}, \gquote{\exists x \dt \Uni(x)})$. 

It is sometimes notationally convenient to introduce classes of the form $A = \{ x \mid \phi(x) \}$ (the class of all sets $x$ such that $\phi(x)$), where $\phi$ is an $\lang^+$-formula. Then $x \in A$ may be regarded as an alternative notation for $\phi(x)$. Thus, we have no need to specify a formal class theory. For example $V = \{ x \mid \top\}$ is the class of all sets. 

Formally, $\var$ is the set of variables $\{{x}_1,  {x}_2, \dots \}$, indexed by the positive natural numbers. But we use other symbols informally (such as $x, y, p, f, \mathcal{U}, \cdots$) for variables as well. $\VA$ is the class of variable-assignments, $\{f \mid f : \var \rightarrow V \}$. If $a$ is a set (or a structure), then $\VA^a$ is the set $\{f \mid f : \var \rightarrow a\}$ of all variable-assignments to elements of (the underlying set of) $a$. If $f$ is a variable-assignment and $v$ is a variable, then $\VA_{f, v}$ is the set of all variable-assignments $g$, such that for all $u \in \var$, $u \neq v \rightarrow g(u) = f(u)$. Suppose that we are working in a set theory $T$ in a language $L$ containing terms $t_1, \cdots t_n$. Note that for $n < \omega$, $T$ proves from $v_1 \in \var, \cdots, v_n \in \var$ that there is a primitive recursive variable-assignment $f$ satisfying $f(v_1) = t_1, \cdots, f(v_n) = t_n$, and $\forall m \in \mathbb{N} \dt (m > n \rightarrow f(v_m) = \underline{0})$. Such a variable assignment $f$ is denoted $\langle v_1, \dots v_n \rangle \mapsto \langle t_1, \cdots t_n \rangle$ (or just $v_1 \mapsto t_1$ in the case $n = 1$).

We assume that model theory is set up so that any structure $\mathcal{M}$ uniquely determines its language, which we denote by $L(\mathcal{M})$, and we take the symbol ``$\mathcal{M}$'' to refer ambiguously to both the structure and its domain. Let $\mathcal{M}$ be an $L$-structure, $\phi$ a formula in $L$ and $f \in \VA^\mathcal{M}$. We use the arrow-notation $\vec{a}$ for finite tuples $\langle a_1, \cdots, a_n \rangle$, and the shorthand $\vec{a} \in \mathcal{M}$ for that each component $a_i$ of $\vec{a}$ is an element of $\mathcal{M}$. We write $\mathcal{M} \models(\phi, f)$ for the statement ``$\phi$ is true in $\mathcal{M}$ under the variable-assignment $f$'', as defined in the usual Tarskian semantics of first-order logic. If $\vec{a} \in \mathcal{M}$ and $\psi(\vec{x}) \in L$, then we write $\mathcal{M} \models \psi(\vec{a})$ for $\mathcal{M} \models (\psi, \vec{x} \mapsto \vec{a})$. We write $\mathcal{M} \models \phi$ for $\forall f \in \VA^\mathcal{M} \dt \mathcal{M} \models (\phi, f)$. If $K$ is a sublanguage of $L$, then $\mathcal{M}\restr_K$ denotes the reduct of $\mathcal{M}$ to $K$. We write $\mathcal{M} \equiv_K \mathcal{N}$ for the statement that $\mathcal{M}$ satisfy the same $K$-sentences as $\mathcal{N}$. We write $\mathcal{M} \cong_K \mathcal{N}$ for the statement that $\mathcal{M}\restr_K$ is isomorphic to $\mathcal{N}\restr_K$. When the subscripts are dropped, they are assumed to be $L(\mathcal{M})$.

We use abbreviations for certain variations of the quantifiers: 
\begin{enumerate}
\item  $\exists x \in y \dt \phi$  stands for  $\exists x \dt (x \in y \wedge \phi)$ . 
\item  $\forall x \in y \dt \phi$  stands for  $\forall x \dt (x \in y \rightarrow \phi)$. 
\item  $\exists^! x \dt \phi$  stands for  $\exists x \dt (\phi(x) \wedge \forall y \dt (\phi(y) \rightarrow x = y))$. 
\end{enumerate}
If $P$ is a predicate symbol, we may write $x \in P$ for $P(x)$. Similarly, we write  $\exists x \in P \dt \phi$  for  $\exists x \dt (P(x) \wedge \phi)$ , and so on.

We will introduce primitive relation symbols ``$\Sat$'', ``$\Uni$'' and ``$\Mod$''. Informally, $\Sat(\phi, f)$ expresses that $\phi$ is satisfied under the variable assignment $f$; $\Uni(\mathcal{U})$ expresses that $\mathcal{U}$ is a universe; and $\Mod(\mathcal{U}, \phi, f)$ expresses that $\phi$ is satisfied in the universe $\mathcal{U}$ under the variable assignment $f$. Recall that $\mathcal{L}_\Sat$ denotes the language $\mathcal{L}$ augmented with the symbol $\Sat$, while $\lang_{\Uni, \Mod}$ denotes $\mathcal{L}$ augmented with $\Mod$ and $\Uni$.

Again, we introduce some abbreviations:
\begin{enumerate}
\item  $\Sat(\phi)$  and  $\Tr(\phi)$  stand for  $\forall f \in \VA \dt \Sat(\phi, f)$.
\item  $\Mod(\mathcal{U}, \phi)$  stands for  $\forall f \in \VA^\mathcal{U} \dt \Mod(\mathcal{U}, \phi, f)$.
\item  $\Tr^\Box(\phi)$  stands for   $\forall \mathcal{U} \in \Uni\dt \forall f \in \VA^\mathcal{U} \dt \Mod(\mathcal{U}, \phi, f)$.
\item  $\Tr^\Diamond(\phi)$  stands for   $\neg \Tr^\Box(\dot\neg \phi)$.
\end{enumerate}

If $X$ is a formula, term or definable object in the language $L$ of the structure $\mathcal{M}$, then $X^\mathcal{M}$ denotes its interpretation in $\mathcal{M}$; e.g. $\phi^\mathcal{M} =_\df \{\vec{a} \in \mathcal{M} \mid \mathcal{M} \models \phi(\vec{a})\}$. 

Informally speaking, if $\mathcal{M}$ is a model of set theory, $\mathcal{N} \in \mathcal{M}$, and $\mathcal{M} \models $ ``$\mathcal{N}$ is a model of set theory'', so that $\mathcal{N}$ is an internal model of $\mathcal{M}$, then we may need to extract $\mathcal{N}$ into an external model that can be studied on a par with $\mathcal{M}$. This is achieved by the following formal definition.

\begin{dfn}\label{Dfn: externalization}
If $\mathcal{M}$ is a model of set theory and $a \in \mathcal{M}$, then 
\[
a_\mathcal{M} =_\df \{x \in \mathcal{M} \mid \mathcal{M} \models x \in a\}.
\] 
This notation is generalized in the cases that $a$ is considered as a relation or as a structure: If $\mathcal{M}$ is a model of set theory, $R \in \mathcal{M}$ and $\mathcal{M}$ satisfies that $R$ is an $\underline{n}$-ary relation (for a natural number $n$ under consideration), then 
\begin{align*}
R_\mathcal{M} =_\df \big\{ \langle x_1, \cdots, x_n \rangle \mid \hspace{2pt} & \exists p \in \mathcal{M} \dt [ \mathcal{M} \models p \in R \hspace{2pt} \wedge \\
& \bigwedge_{1 \leq i \leq n} \text{``$x_i$ is the $\underline{i}$:th component of $p$''}]\big\}.
\end{align*}
If $\mathcal{M}$ is a model of set theory, $\mathcal{N}, N, R_1, \cdots, R_n$ are elements of $\mathcal{M}$, and $\mathcal{M}$ satisfies that $\mathcal{N}$ is a structure with domain $N$ and relations $R_1, \cdots, R_n$ (of arities $\underline{r_1}, \cdots, \underline{r_n}$, respectively), then 
\[
\mathcal{N}_\mathcal{M} =_\df \langle N_\mathcal{M}, (R_1)_\mathcal{M}, \cdots, (R_n)_\mathcal{M} \rangle.
\]
In either of the above cases $a_\mathcal{M}$ is called the {\em $\mathcal{M}$-externalization of $a$}.
\end{dfn}

For example, $\omega^\mathcal{M}$ denotes the {\em element} of $\mathcal{M}$ such that $\mathcal{M} \models \phi(\omega)$, where $\phi$ defines $\omega$. On the other hand, $(\omega^\mathcal{M})_\mathcal{M}$ denotes the {\em subset} $\{a \in \mathcal{M} \mid \mathcal{M} \models a \in \omega\}$ of $\mathcal{M}$, consisting of all $a$ such that $\mathcal{M} \models a \in \omega$.

\subsection{Recursive saturation}\label{subsection: Rec sat}

A {\em type $p(\vec{x})$, over a theory $T$} in a language $L$, is a set of $L$-formulas such that $T \cup p$ is consistent when the variables $\vec{x}$ are considered as fresh constant symbols. If $\mathcal{M} \models T$, then $p(\vec{x})$ is {\em realized} in $\mathcal{M}$ if there is $\vec{a} \in \mathcal{M}$, such that for all $\phi(\vec{x}) \in p$, we have $\mathcal{M} \models \phi(\vec{a})$. A {\em type $p(\vec{x}, \vec{b})$, over $\mathcal{M}$ with parameters $\vec{b} \in \mathcal{M}$}, is a type over $\mathrm{Th}(\mathcal{M}, \vec{b})$ (the theory of $\mathcal{M}$ with parameters $\vec{b}$). Such a type $p(\vec{x}, \vec{b})$ is {\em recursive} if it is a recursive set (under some fixed G\"odel coding of the formulas as natural numbers). A structure $\mathcal{M}$ is {\em recursively saturated} if it realizes every recursive type over $\mathcal{M}$. A {\em $\crsm$} is a countable recursively saturated model.

\begin{thm}[Completeness of the $\crsm$-semantics]\label{Thm: rec sat complete}
Let $\mathcal{M}$ be a countable model in a recursive language. There is a countable recursively saturated elementary extension of $\mathcal{M}$. In particular, every consistent theory in a recursive language is modelled by a $\crsm$.
\end{thm}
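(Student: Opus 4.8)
The plan is to build the desired extension $\mathcal{M}'$ as the union of an elementary chain of countable models $\mathcal{M} = \mathcal{M}_0 \preceq \mathcal{M}_1 \preceq \mathcal{M}_2 \preceq \cdots$, arranged so that every recursive type over $\mathcal{M}_n$ (with parameters in $\mathcal{M}_n$) is already realized in $\mathcal{M}_{n+1}$. Since the language is recursive there are only countably many recursive sets of (codes of) formulas, and since each $\mathcal{M}_n$ is countable there are only countably many finite parameter tuples; hence at each stage there are only countably many recursive types to accommodate. The key construction is therefore the single step from $\mathcal{M}_n$ to $\mathcal{M}_{n+1}$.

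For the one-step construction, enumerate the recursive types $\{p_i(\vec{x}, \vec{b}_i)\}_{i<\omega}$ over $\mathcal{M}_n$, introduce for each $i$ a fresh tuple of constants $\vec{c}_i$, and consider the theory $T^\ast$ consisting of the elementary diagram of $\mathcal{M}_n$ together with all sentences $\phi(\vec{c}_i, \vec{b}_i)$ for $\phi \in p_i$. To see that $T^\ast$ is consistent I would appeal to compactness: a finite subset mentions finitely many types $p_{i_1}, \dots, p_{i_k}$ and finitely many formulas from each. For a single type $p_i$, the consistency of $\mathrm{Th}(\mathcal{M}_n, \vec{b}_i) \cup p_i$ together with the completeness of $\mathrm{Th}(\mathcal{M}_n, \vec{b}_i)$ gives $\mathcal{M}_n \models \exists \vec{x} \bigwedge_{\phi \in F} \phi(\vec{x}, \vec{b}_i)$ for every finite $F \subseteq p_i$; as the constant tuples $\vec{c}_{i_1}, \dots, \vec{c}_{i_k}$ are distinct, these finitely many existential statements can be witnessed independently inside $\mathcal{M}_n$ itself, so $\mathcal{M}_n$ expands to a model of the finite subset. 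By compactness $T^\ast$ has a model $\mathcal{N}$, which satisfies the elementary diagram of $\mathcal{M}_n$ and hence restricts to an elementary extension of $\mathcal{M}_n$, and in which every $p_i$ is realized by $\vec{c}_i^{\,\mathcal{N}}$. Applying the downward L\"owenheim--Skolem theorem to the countable set $\mathcal{M}_n \cup \{\vec{c}_i^{\,\mathcal{N}} : i < \omega\}$ yields a countable elementary substructure $\mathcal{M}_{n+1} \preceq \mathcal{N}$ containing it. Since $\mathcal{M}_n \subseteq \mathcal{M}_{n+1} \preceq \mathcal{N}$ and $\mathcal{M}_n \preceq \mathcal{N}$, the standard Tarski--Vaught corollary gives $\mathcal{M}_n \preceq \mathcal{M}_{n+1}$, and each $p_i$ is realized in $\mathcal{M}_{n+1}$.

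Now set $\mathcal{M}' = \bigcup_{n<\omega} \mathcal{M}_n$. By the elementary chain theorem $\mathcal{M}_n \preceq \mathcal{M}'$ for every $n$; in particular $\mathcal{M} \preceq \mathcal{M}'$, and $\mathcal{M}'$ is countable as a countable union of countable sets. To verify recursive saturation, let $p(\vec{x}, \vec{b})$ be a recursive type over $\mathcal{M}'$. The finitely many parameters $\vec{b}$ lie in some $\mathcal{M}_n$, and since $\mathcal{M}_n \preceq \mathcal{M}'$ we have $\mathrm{Th}(\mathcal{M}_n, \vec{b}) = \mathrm{Th}(\mathcal{M}', \vec{b})$, so $p$ is also a recursive type over $\mathcal{M}_n$; by construction it is realized in $\mathcal{M}_{n+1} \preceq \mathcal{M}'$, hence in $\mathcal{M}'$. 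Thus $\mathcal{M}'$ is a countable recursively saturated elementary extension of $\mathcal{M}$. Finally, the ``in particular'' clause follows by taking, for any consistent theory $T$ in a recursive language, a countable model $\mathcal{M} \models T$ (by the completeness theorem and downward L\"owenheim--Skolem) and passing to its $\crsm$ elementary extension $\mathcal{M}'$, which still models $T$.

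The step I expect to be the main obstacle is the one-step simultaneous realization: correctly verifying the consistency of $T^\ast$ — where it is the completeness of each parameter-theory $\mathrm{Th}(\mathcal{M}_n, \vec{b}_i)$ that lets every finite fragment of a type be witnessed in $\mathcal{M}_n$ outright — and then threading the countability requirement through the downward L\"owenheim--Skolem step while preserving both the realizations and the elementarity $\mathcal{M}_n \preceq \mathcal{M}_{n+1}$.
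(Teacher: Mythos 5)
Your proof is correct, but it takes a genuinely different route from the paper's. The paper gives no argument of its own: it cites Theorem 2.4.1 of Chang--Keisler \cite{CK90}, whose proof (as the paper itself notes in the footnote to the proof of Lemma \ref{Lemma: GR^omega interprets SP^omega}) is essentially a Henkin construction, carried out by recursion on an enumeration of the recursive language and on an enumeration of its recursive sets of formulas, so that recursive types get realized as the canonical term model is built. Your argument is instead the classical elementary-chain construction: realize all (countably many) recursive types over $\mathcal{M}_n$ simultaneously, by compactness applied to the elementary diagram of $\mathcal{M}_n$ together with disjoint fresh constant tuples --- where, as you correctly isolate, completeness of each $\mathrm{Th}(\mathcal{M}_n, \vec{b}_i)$ is what lets every finite fragment of a type be witnessed inside $\mathcal{M}_n$ itself --- then cut down by downward L\"owenheim--Skolem while keeping $\mathcal{M}_n$ and the realizing tuples, and finally take the union of the chain, checking saturation via the fact that any finite parameter tuple lands in some $\mathcal{M}_n$. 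All of these steps are sound. Comparing what each approach buys: your version is self-contained and proves the elementary-extension clause of the theorem directly, with the ``in particular'' clause as a corollary, whereas the cited theorem is stated for theories and yields the extension form by applying it to an elementary diagram. Conversely, the Henkin-style construction is canonical enough to be carried out \emph{definably} inside a model of set theory, and the paper exploits exactly this feature later (in the proof of Lemma \ref{Lemma: GR^omega interprets SP^omega}) to obtain a definable $\crsm$ modelling the set of true sentences; your chain construction makes arbitrary choices at each step (a compactness model, then a L\"owenheim--Skolem hull), so it would need additional bookkeeping to serve that later purpose.
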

\begin{proof}
See the proof of Theorem 2.4.1 in \cite{CK90}.
\end{proof}

Let $\mathcal{M}$ be a model of a set theory $T$. The interpretations of the numerals in $\mathcal{M}$ are called the {\em standard natural numbers of $\mathcal{M}$}. For each $n < \omega$, let us make the identification $n = \underline{n}^\mathcal{M}$. We say that $\mathcal{M}$ is {\em $\omega$-non-standard} if there is $c \in (\omega^\mathcal{M})_\mathcal{M} \setminus \omega$. Such a $c$ is said to be a {\em non-standard number of $\mathcal{M}$}. 

\begin{prop}\label{Prop: rec sat omega-non-standard}
If $\mathcal{M}$ is a recursively saturated model of a set theory, then $\mathcal{M}$ is $\omega$-non-standard.
\end{prop}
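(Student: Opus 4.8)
The plan is to exhibit a single recursive one-variable type over $\mathcal{M}$ whose realization is forced to be a non-standard natural number, and then invoke recursive saturation directly. Concretely, I would consider the type
\[
p(x) = \{\, x \in \omega \,\} \cup \{\, x \neq \underline{n} \mid n < \omega \,\},
\]
where each $\underline{n}$ is the closed $\lang$-term $S^n(\underline{0})$, so that every formula in $p(x)$ is a genuine $\lang$-formula in the single free variable $x$ with no parameters. The point of $p$ is that any realization must be an internal natural number (by $x \in \omega$) that evades all the standard numerals (by the conditions $x \neq \underline{n}$), which is exactly the definition of a non-standard number.

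First I would check that $p(x)$ is recursive: a formula lies in $p$ exactly when it is the formula $x \in \omega$ or has the syntactic shape $x \neq S^n(\underline{0})$ for some $n$, and both conditions are decidable under the fixed G\"odel coding. Next I would verify that $p(x)$ is a type over $\mathcal{M}$, i.e.\ that it is finitely satisfiable in $\mathcal{M}$. A finite subset of $p$ imposes at most the constraints $x \in \omega$ together with $x \neq \underline{n_1}, \dots, x \neq \underline{n_k}$ for finitely many indices. Since $T$ is a set theory it proves Infinity, and hence proves that $\omega$ has an element exceeding $\underline{m}$ for each standard $m$; taking $m = \max(n_1, \dots, n_k)$, the sentence $\exists x \in \omega \dt (x > \underline{m})$ is a theorem of $T$ and therefore holds in $\mathcal{M}$. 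Any such witness lies in $(\omega^\mathcal{M})_\mathcal{M}$ and differs from each of $\underline{n_1}^\mathcal{M}, \dots, \underline{n_k}^\mathcal{M}$, so the finite subset is realized; thus $\mathrm{Th}(\mathcal{M}) \cup p$ is consistent and $p$ is a recursive type over $\mathcal{M}$.

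Finally, recursive saturation supplies an element $c \in \mathcal{M}$ realizing $p(x)$. Then $\mathcal{M} \models c \in \omega$, so $c \in (\omega^\mathcal{M})_\mathcal{M}$, while $\mathcal{M} \models c \neq \underline{n}$ for every $n < \omega$ gives $c \neq \underline{n}^\mathcal{M} = n$ for all such $n$ under the identification $n = \underline{n}^\mathcal{M}$. Hence $c \in (\omega^\mathcal{M})_\mathcal{M} \setminus \omega$, witnessing that $\mathcal{M}$ is $\omega$-non-standard. The only genuinely load-bearing step is the finite-satisfiability verification, and the substantive input there is merely that a set theory proves Infinity, so that $\omega$ is internally infinite; everything else is bookkeeping about the coding and the definition of $\omega$-non-standardness.
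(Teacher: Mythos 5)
Your proof is correct and follows essentially the same route as the paper: the paper's one-line proof realizes the recursive type $\{x \in \mathbb{N}\} \cup \{\underline{n} < x \mid n \in \mathbb{N}\}$, which differs from your $\{x \in \omega\} \cup \{x \neq \underline{n} \mid n < \omega\}$ only in using $\underline{n} < x$ in place of $x \neq \underline{n}$. Your write-up simply makes explicit the recursiveness and finite-satisfiability checks (via Infinity) that the paper leaves implicit.
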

\begin{proof}
By recursive saturation, $\mathcal{M}$ realizes the type $\{x \in \mathbb{N}\} \cup \{\underline{n} < x \mid n \in \mathbb{N}\}$.
\end{proof}

Suppose that $\mathcal{M}$ is $\omega$-non-standard. We say that a subset $A$ of $\omega$ is {\em coded in $\mathcal{M}$}, if there is (a {\em code}) $a \in (\omega^\mathcal{M})_\mathcal{M}$, such that $A = \{n \in \mathbb{N} \mid \mathcal{M} \models \underline{n} < a\}$. We define the {\em standard system of $\mathcal{M}$} as
\[
\SSy(\mathcal{M}) =_\df \{A \subseteq \omega \mid \text{``$A$ is coded in $\mathcal{M}$''}\}.
\]

The following result is due to Wilmers (\citeyear{Wil75}), employing Friedman's back-and-forth technique (\citeyear{Fri73}):
\begin{thm}[Canonicity of countable recursively saturated models] \label{Thm: rec sat iso}
Let $\mathcal{M}$ and $\mathcal{N}$ be $\crsm$s modelling $\ZF$. If $\mathcal{M} \equiv_\lang \mathcal{N}$ and $\SSy(\mathcal{M}) = \SSy(\mathcal{N})$, then $\mathcal{M} \cong_\lang \mathcal{N}$. 
\end{thm}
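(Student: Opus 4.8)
The plan is to construct an $\lang$-isomorphism $F : \mathcal{M} \to \mathcal{N}$ by a Friedman-style back-and-forth, building it as the union of an increasing chain of finite partial maps $\vec{a} \mapsto \vec{b}$ (with $\vec{a} \in \mathcal{M}$, $\vec{b} \in \mathcal{N}$) that are \emph{type-preserving}, meaning $\mathrm{tp}^{\mathcal{M}}(\vec{a}) = \mathrm{tp}^{\mathcal{N}}(\vec{b})$, where $\mathrm{tp}^{\mathcal{M}}(\vec{a}) =_\df \{\gquote{\phi} \mid \mathcal{M} \models \phi(\vec{a})\} \subseteq \omega$ is the complete $\lang$-type of the tuple. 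Type-preservation forces $a_i = a_j \Leftrightarrow b_i = b_j$ (equality is a formula), so the chain defines a well-defined injection that preserves every atomic $\lang$-formula, hence an $\lang$-isomorphism once it is made onto. The base case is the empty map, which is type-preserving exactly because $\mathcal{M} \equiv_\lang \mathcal{N}$. The two engines of the induction are recursive saturation (to realize the types we need) and the hypothesis $\SSy(\mathcal{M}) = \SSy(\mathcal{N})$ (to guarantee that the type we must transfer is actually available in the other model).

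The first preliminary step I would isolate is that in a recursively saturated model the complete type of any finite tuple lies in the standard system. Given $\vec{a} \in \mathcal{M}$, consider, for each $\lang$-formula $\phi$, the formula $\sigma_\phi(z, \vec{x})$ asserting ``$\gquote{\phi}$ is in the set coded by $z$ if and only if $\phi(\vec{x})$''. The family $\{\sigma_\phi \mid \phi \in \lang\}$ is recursive in $\phi$, and after substituting $\vec{a}$ for $\vec{x}$ it is finitely satisfiable in $\mathcal{M}$, since for any finite subfamily a witness is any $z$ coding the corresponding finite set of true formula-codes. By recursive saturation this recursive type is realized by some $c^\ast \in (\omega^\mathcal{M})_\mathcal{M}$, and $c^\ast$ codes $\mathrm{tp}^{\mathcal{M}}(\vec{a})$; thus $\mathrm{tp}^{\mathcal{M}}(\vec{a}) \in \SSy(\mathcal{M})$.

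The hard part is the forth step, and I expect it to be the crux of the argument. Suppose $\vec{a} \mapsto \vec{b}$ is type-preserving and $c \in \mathcal{M}$ is given; I must produce $d \in \mathcal{N}$ with $\mathrm{tp}^{\mathcal{M}}(\vec{a}c) = \mathrm{tp}^{\mathcal{N}}(\vec{b}d)$. Put $P = \mathrm{tp}^{\mathcal{M}}(\vec{a}, c)$. By the previous paragraph $P \in \SSy(\mathcal{M}) = \SSy(\mathcal{N})$, so $P$ is coded in $\mathcal{N}$ by some $e \in (\omega^\mathcal{N})_\mathcal{N}$. The obstacle is that $P$ need not be recursive, so recursive saturation does not directly apply to the transferred type $\{\phi(\vec{b}, y) \mid \gquote{\phi} \in P\}$. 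The device is to fold the code $e$ into the type as a parameter: for each $\phi(\vec{x}, y)$ let $\psi_\phi(y, w, \vec{x})$ express ``if $\gquote{\phi}$ is in the set coded by $w$, then $\phi(\vec{x}, y)$''. The family $\{\psi_\phi \mid \phi\}$ is recursive in $\phi$, so after substituting $w = e$ and $\vec{x} = \vec{b}$ it is a \emph{recursive} type over $\mathcal{N}$ with parameters $e, \vec{b}$. Its finite satisfiability follows from type-preservation: for any finite $F$, $\mathcal{M} \models \exists y \bigwedge_{\phi \in F,\ \gquote{\phi} \in P} \phi(\vec{a}, y)$ is witnessed by $c$, so the code of this existential statement lies in $\mathrm{tp}^{\mathcal{M}}(\vec{a}) = \mathrm{tp}^{\mathcal{N}}(\vec{b})$, furnishing a witness in $\mathcal{N}$. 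By recursive saturation of $\mathcal{N}$ some $d$ realizes the type, and unwinding the implications gives $\gquote{\phi} \in P \Rightarrow \mathcal{N} \models \phi(\vec{b}, d)$, i.e. $\mathrm{tp}^{\mathcal{M}}(\vec{a}c) \subseteq \mathrm{tp}^{\mathcal{N}}(\vec{b}d)$; as both are complete types they coincide.

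The back step is entirely symmetric, using $\SSy(\mathcal{N}) = \SSy(\mathcal{M})$ and recursive saturation of $\mathcal{M}$. Finally I would enumerate $\mathcal{M}$ and $\mathcal{N}$, which are countable, and interleave forth and back steps so that every element of $\mathcal{M}$ eventually enters a domain and every element of $\mathcal{N}$ eventually enters a range; the union of the resulting chain of type-preserving finite maps is then a type-preserving bijection $F : \mathcal{M} \to \mathcal{N}$, which is the required $\lang$-isomorphism. The type-coding and transfer argument in the forth step is the substantive point; the coding lemma and the final assembly are routine bookkeeping.
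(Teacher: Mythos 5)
Your proposal is correct and follows essentially the same route as the paper, which for this theorem simply cites Wilmers' result as proved by Friedman's back-and-forth technique (Theorem 7.14 of \cite{KG19}): a back-and-forth on finite type-preserving maps, in which recursive saturation places the complete type of any tuple into the standard system, and the hypothesis $\SSy(\mathcal{M}) = \SSy(\mathcal{N})$ lets a code for that type be used as a parameter in a \emph{recursive} type realized on the other side. One small point to tidy: the realizer $c^\ast$ of your coding type only pins down membership of standard \emph{formula} codes, so the set it codes may contain extraneous standard numbers; to conclude $\mathrm{tp}^{\mathcal{M}}(\vec{a}) \in \SSy(\mathcal{M})$ you should either add to the type the guard ``every element of the set coded by $z$ is (internally) a code of an $\lang$-formula in the displayed free variables'' or pass inside $\mathcal{M}$ to the code of the intersection with the (recursive) set of formula codes --- either fix is immediate.
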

\begin{proof}
See the proof of Theorem 7.14 in \cite{KG19}.
\end{proof}

\begin{rem*} As far as the authors can see, the proof of the above Theorem requires both the full Separation and Replacement schemas of $\ZF$, as well as its Foundation axiom.
\end{rem*}

We define the class
\[
\crsmb =_\df \{\mathcal{M} \mid \mathcal{M}\restr_\lang \models \ZF \wedge \textnormal{``$\mathcal{M}$ is a $\crsm$''}\}.
\]

\subsection{Systems of satisfaction over $\ZF$}

Before embarking on developing a framework for a notion of truth-in-a-universe relevant to the set-theoretic multiverse, we shall go through some related systems of truth over $\ZF$. An intuitive philosophical perspective is that systems of truth capture various absolute notions of truth, as motivated by the universe view of set theory, while our framework for truth-in-a-universe captures various relative notions of truth, as motivated by the multiverse view of set theory. From a mathematical perspective, it is interesting to relate these two approaches. Moreover, since we will generalize techniques that have been developed for studying systems of truth, these provide a relevant context for viewing our results.

Right at the start of the endeavour to axiomatize truth, one faces the choice between introducing (to the base language of set theory) a unary truth-predicate $\Tr(\sigma)$, applying to sentences $\sigma$, or a binary satisfaction-relation $\Sat(\phi, f)$, applying to formulas $\phi$ and variable-assignments $f : \var \rightarrow V$. In the former option, $\sigma$ needs to range over a class-sized language where there is a constant-symbol $c_x$ corresponding to each $x \in V$. The authors have chosen the latter option. As a general heuristic, one is justified to expect any theory of satisfaction to be interpretable in a corresponding theory of truth; the idea being to interpret $\Sat(\phi(x), f)$ by $\Tr(\phi[c_{f(x)}/x])$. Fujimoto has made a comprehensive study of theories of truth over set theory, following the former option (\citeyear{Fuj12}). 

\begin{sys}[$\CT$]\label{Sys: CT}
Let $S$ be a set theory in $\lang^+$. The system $\CT(S)\restr$ (for Compositional Truth) consists of these axioms in the language $\lang^+_\Sat$:
\[
\begin{array}{ll}
\mathsf{Base} & S  \\
\CT_= & \forall y_0, y_1 \dt \big( \Sat(\gquote{x_0 = x_1}, \langle \gquote{x_0}, \gquote{x_1} \rangle \mapsto \langle y_0, y_1 \rangle) \leftrightarrow y_0 = y_1 \big) \\
\CT_\in & \forall y_0, y_1 \dt \big( \Sat(\gquote{x_0 \in x_1}, \langle \gquote{x_0}, \gquote{x_1} \rangle \mapsto \langle y_0, y_1 \rangle) \leftrightarrow y_0 \in y_1 \big) \\
\CT_\neg & \forall \phi \in \lang^+_\Sat \dt \forall f \in \VA \dt (\Sat(\udot\neg \phi, f) \leftrightarrow \neg \Sat(\phi, f)) \\
\CT_\wedge & \forall \phi, \psi \in \lang^+_\Sat \dt \forall f \in \VA \dt (\Sat(\phi \wedged \psi, f) \leftrightarrow \Sat(\phi, f) \wedge \Sat(\psi, f)) \\
\CT_\forall & \forall \phi \in \lang^+_\Sat \dt \forall f \in \VA \dt (\Sat(\udot\forall u \dt \phi, f) \leftrightarrow \forall g \in \VA_{f, u} \dt \Sat(\phi, g)) \\
\end{array}
\]
We write $\CT\restr$ for $\CT(\ZF)\restr$. The axioms of the form $\CT_-$ are called {\em compositional axioms}. By basic logic, $\CT\restr$ also proves the axioms $\CT_\vee$, $\CT^\rightarrow$ and $\CT_\exists$ (analogously defined). We use phrases such as ``$\Sat$ is $\vee$-compositional'' to express that we have $\CT_\vee$, for example.

$\CT$ is $\CT\restrp \Sep(\lang_\Sat) + \Rep(\lang_\Sat)$. 
\end{sys}

A routine induction argument in the meta-theory shows this Proposition: 
\begin{prop}\label{Prop: T biconditionals}
For all $\lang$-formulas $\phi(x)$, $\CT\restr \hspace{3pt} \vdash \Sat(\gquote{\phi(x)}, \gquote{x} \mapsto y) \leftrightarrow \phi(y)$.
\end{prop}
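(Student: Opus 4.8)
The plan is to prove the Proposition by an \emph{external} induction on the structure of $\phi$: for each fixed $\lang$-formula $\phi$ we exhibit a single finite $\CT\restr$-derivation of the corresponding biconditional, rather than carrying out an induction inside the theory. This is precisely why the result holds already for $\CT\restr$ and does not require the extended schemas $\Sep(\lang_\Sat)$ and $\Rep(\lang_\Sat)$ of full $\CT$: each instance is derived separately, in finitely many steps invoking only the compositional axioms and the base theory $\ZF$, and at no point do we quantify over $\lang_\Sat$-formulas (which would need $\Sat$-induction).

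To make the induction close I would first strengthen the statement to cover formulas with several free variables and an arbitrary variable-assignment. Concretely, I would prove by induction on $\phi$ that for every $\lang$-formula $\phi$ whose free variables are among $x_{i_1}, \dots, x_{i_n}$,
\[
\CT\restr \vdash \forall f \in \VA \dt \big( \Sat(\gquote{\phi}, f) \leftrightarrow \phi(f(\gquote{x_{i_1}}), \dots, f(\gquote{x_{i_n}})) \big),
\]
where $\phi(f(\gquote{x_{i_1}}), \dots, f(\gquote{x_{i_n}}))$ denotes the result of substituting the set-terms $f(\gquote{x_{i_j}})$ for the free variables of the fixed metatheoretic formula $\phi$. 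The stated Proposition is the special case $n = 1$, obtained by instantiating $f$ with the assignment $\gquote{x} \mapsto y$, for which $f(\gquote{x}) = y$. Using an arbitrary $f$ (rather than only finite-support assignments) is the decisive design choice, since it lets the induction hypothesis apply to the modified assignments produced in the quantifier case and so dispenses with a separate coincidence lemma.

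The base case treats atomic $\phi$. For $\phi \equiv x_{i_a} = x_{i_b}$ the biconditional is an instance of $\CT_{=}$, and for $\phi \equiv x_{i_a} \in x_{i_b}$ an instance of $\CT_{\in}$; here the atomic axioms are used in their general form, applying to an arbitrary atom and an arbitrary $f$, so that $\Sat(\gquote{\phi}, f)$ reduces to the relation between $f(\gquote{x_{i_a}})$ and $f(\gquote{x_{i_b}})$. The connective cases are routine: $\CT_{\neg}$ with the induction hypothesis gives the $\neg$-case, $\CT_{\wedge}$ applied with the \emph{same} $f$ to both conjuncts gives the $\wedge$-case (here one takes the list $x_{i_1}, \dots, x_{i_n}$ to contain the free variables of both conjuncts), and the derived axioms $\CT_{\vee}$, $\CT^{\rightarrow}$ handle $\vee$ and $\rightarrow$.

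The crux is the quantifier case, say $\phi \equiv \forall u \dt \psi$ with $\FV(\psi) \subseteq \{x_{i_1}, \dots, x_{i_n}, u\}$. By $\CT_{\forall}$ we have $\Sat(\gquote{\forall u \dt \psi}, f) \leftrightarrow \forall g \in \VA_{f, u} \dt \Sat(\gquote{\psi}, g)$. Each $g \in \VA_{f, u}$ agrees with $f$ off $\gquote{u}$, so $g(\gquote{x_{i_j}}) = f(\gquote{x_{i_j}})$ while $g(\gquote{u})$ ranges over all of $V$ (using the base theory to produce, for each set $z$, a witnessing $g$). Applying the induction hypothesis for $\psi$ to each such $g$ rewrites the right-hand side as $\forall z \dt \psi(f(\gquote{x_{i_1}}), \dots, f(\gquote{x_{i_n}}), z)$, which is exactly $(\forall u \dt \psi)(f(\gquote{x_{i_1}}), \dots, f(\gquote{x_{i_n}}))$; the $\exists$-case is dual via $\CT_{\exists}$. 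I expect this step to be the main obstacle: one must set up the generalization with an arbitrary assignment and then carefully match the object-level quantifier against the metatheoretic quantification over assignments differing only at $u$, checking that the witnessing $g$ fall under the induction hypothesis.
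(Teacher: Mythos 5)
Your proof is correct and is precisely the argument the paper has in mind: the paper's entire proof is the remark that a routine induction in the meta-theory suffices, and your external induction on $\phi$ --- strengthened to several free variables and arbitrary assignments so that the $\CT_\forall$ case closes, with each instance a single finite derivation requiring neither $\Sep(\lang_\Sat)$ nor $\Rep(\lang_\Sat)$ --- is exactly that routine argument. Your caveat that the base case uses $\CT_=$ and $\CT_\in$ ``in their general form'' (arbitrary variable-atoms, arbitrary $f$) is moreover a genuine necessity rather than a convenience: the assignments in $\VA_{f,u}$ produced by $\CT_\forall$ are not of the canonical finite-support shape appearing in the literal statement of System~\ref{Sys: CT}, whose axioms leave the satisfaction of atoms under all other assignments unconstrained, so the general reading is the one under which the Proposition is provable at all, and is surely the intended one.
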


The theory of satisfaction $\CT\restrp \Sep(\lang_\Sat)$ corresponds to the theory of truth $\mathsf{TC}\restrp \Sep^+$ in \cite[\S 4]{Fuj12}. It is straightforward to interpret the former in the latter. Using this, it follows from Theorem 20 in \cite[\S 4.1]{Fuj12} that $\CT\restrp \Sep(\lang_\Sat)$ is conservative over $\ZF$. In contrast, in $\CT$ we have access to the Reflection theorem for $\lang_\Sat$-formulas, enabling us to prove that there is a $V_\alpha$ modelling $\ZF$. See \cite[\S 4.1]{Fuj12} for more details and refinements.

\begin{prop}\label{Prop: CT induction}
$\CT\restrp \Sep(\lang_\Sat) \vdash \textnormal{``Transfinite Induction over $\lang_\Sat$''}$
\end{prop}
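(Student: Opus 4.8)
The plan is to reduce transfinite induction for an arbitrary $\lang_\Sat$-formula to the single Foundation axiom of the base theory $\ZF$. The key observation is that Foundation (Regularity) is one $\lang$-sentence concerning only membership, hence already available in $\CT\restr$; what the new hypothesis $\Sep(\lang_\Sat)$ buys us is the ability to \emph{localize} a would-be counterexample to a set carved out by an $\lang_\Sat$-formula. So the role of $\Sep(\lang_\Sat)$ is isolated to exactly one step, and everything else is standard $\ZF$ reasoning over the membership language.

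Concretely, fix an $\lang_\Sat$-formula $\phi(x, \vec{p})$ with parameters and assume the induction hypothesis $\forall x \dt \big( (\forall y \in x \dt \phi(y)) \rightarrow \phi(x) \big)$; the goal is $\forall x \dt \phi(x)$. Arguing for contradiction, suppose $\neg \phi(a)$ for some $a$. First I would pass from the (possibly proper-class) collection of counterexamples to a set by forming the transitive closure $t = \mathrm{TC}(\{a\})$. This exists in $\ZF$, and crucially its construction (iterating Union along $\omega$ via Replacement) uses only $\lang$-formulas, so no appeal to $\Sep(\lang_\Sat)$ is needed here. Then comes the one step that genuinely requires the extended Separation: using $\Sep(\lang_\Sat)$ I form $b = \{x \in t \mid \neg \phi(x, \vec{p})\}$. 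Since $a \in t$ and $\neg \phi(a)$, we have $a \in b$, so $b \neq \emptyset$. Now Foundation yields an $\in$-minimal $c \in b$, i.e.\ $c \cap b = \emptyset$. As $t$ is transitive and $c \in t$, every $y \in c$ lies in $t$, and minimality gives $y \notin b$, hence $\phi(y)$; so $\forall y \in c \dt \phi(y)$. The induction hypothesis then forces $\phi(c)$, contradicting $c \in b$.

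The ordinal-indexed form of transfinite induction follows by the identical method, replacing $t$ with a successor ordinal $\alpha_0 + 1$ serving as the bound, taking $b = \{\beta \in \alpha_0 + 1 \mid \neg \phi(\beta)\}$ via $\Sep(\lang_\Sat)$, and using the $\in$-least element of $b$ (the ordinals being well-ordered, provably in $\ZF$) in place of an $\in$-minimal one. I do not anticipate a serious obstacle: the entire content is the remark that Foundation is a single membership-sentence already present in $\CT\restr$, so that extending induction from $\lang$ to $\lang_\Sat$ costs precisely one instance of $\lang_\Sat$-Separation per formula. The only point demanding a little care is to confirm that the auxiliary constructions used to bound the counterexamples—transitive closure, or the successor ordinal—are definable by $\lang$-formulas and therefore do not covertly smuggle in further instances of $\Sep(\lang_\Sat)$ beyond the single one making $b$ a set.
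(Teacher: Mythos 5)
Your proposal is correct and takes essentially the same approach as the paper: the paper's proof is precisely your ordinal-indexed paragraph, forming $S = \{\xi \leq \alpha \mid \neg\phi(\xi)\}$ by $\Sep(\lang_\Sat)$ and taking its least element to refute the induction hypothesis. Your first paragraph additionally proves the more general $\in$-induction schema via transitive closure and Foundation, but this is the same mechanism (localize the counterexamples with a single instance of $\Sep(\lang_\Sat)$, then pick a minimal one), so it is a mild strengthening rather than a different route.
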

\begin{proof}
Let $\phi(x) \in \lang_\Sat$. Assuming $\neg \phi(\alpha)$, for some ordinal $\alpha$, we shall refute the corresponding induction hypothesis. By $\Sep(\lang_\Sat)$, the set $S = \{\xi \leq \alpha \mid \neg\phi(\xi)\}$ exists, and by assumption it is non-empty. Let $\beta$ be the least ordinal in $S$. Then $\neg \phi(\beta)$ and $\forall \xi < \beta \dt \phi(\xi)$, refuting the induction hypothesis.
\end{proof}

\begin{sys}[$\GR^\omega$]\label{Sys: GR omega}
Let $S$ be a set theory in $\lang^+$. Here we present the systems of iterated Global Reflection over $S$, denoted $\GR^\alpha(S)$, for ordinals $\alpha \leq \omega$. For any set theory $T$ in the language $\lang_\Sat$, the axiom of {\em Global Reflection} over $T$ is 
\[
\begin{array}{ll}
\GR_{\udot T} & \forall \phi \in \udot\lang^+_\Sat \dt (\Prv_{\udot T}(\phi) \rightarrow \Tr(\phi)).
\end{array}
\]
(The dot under $T$ is sometimes omitted, when it can be inferred from the context.)

Recursively, for each $\alpha \leq \omega$, we define the system $\GR^\alpha(S)$: 
\begin{align*}
\GR^0(S) &=_\df \CT(S) \restrp \Sep(\lang^+_\Sat) \\
\GR^{\alpha+1}(S) &=_\df \GR^\alpha(S) + \GR_{\GR^{\alpha}} \\
\GR^\omega(S) &=_\df \bigcup_{n < \omega} \GR^n(S)
\end{align*}
We write $\GR^\alpha$ for $\GR^\alpha(\ZF)$.
\end{sys}

\begin{rem*} Observe that $\GR^\alpha$ is defined with $\CT \restrp \Sep(\lang_\Sat)$ as base case, thus without Replacement for formulas with the Satisfaction predicate. The reason for this is that it is intended to express iterated Global Reflection over $\ZF$, which $\CT \restrp \Sep(\lang_\Sat)$ conservatively extends. If it were defined with $\CT$ as base case, then it would not be morally ``over $\ZF$'', since $\CT$ proves strong reflection principles of its own.
\end{rem*}

\begin{sys}[$\FS$]
The systems $\FS\hspace{-3pt}\restriction$ and $\FS$ (for Friedman--Sheard) are obtained by adding these rules of proof to $\CT\hspace{-3pt}\restriction$ and $\CT$, respectively:
\[
\begin{array}{ll}
\NEC & \text{For each $\phi \in \sent(\lang_\Sat)$: If $\FS \vdash \phi$, then $\FS \vdash \Tr(\gquote{\phi})$.} \\
\CONEC & \text{For each $\phi \in \sent(\lang_\Sat)$: If $\FS \vdash \Tr(\gquote{\phi})$, then $\FS \vdash \phi$.} \\
\end{array}
\]
Recall that $\Tr(\theta)$ is defined as $\forall f \in \VA \dt \Sat(\theta, f)$.
\end{sys}
Given a set-theoretic system $S$ in language $L$, the following rule will be considered:
\[
\begin{array}{ll}
\textnormal{\sf Reflection rule} & \text{For each $\phi$ in $L$: If $S \vdash \Pr_S(\gquote{\phi})$, then $S \vdash \phi$.}
\end{array}
\]

\begin{dfn}
A set-theoretic system $S$ in language $L$ is $\omega$-inconsistent if there is an $L$-formula $\uranus(x)$ such that:
\begin{align*}
S &\vdash \forall x \dt (\uranus(x) \rightarrow x \in \mathbb{N}) \\
S &\vdash \exists x \dt \uranus(x) \\
\text{For each } n \in \mathbb{N} \text{, } S &\vdash \neg\uranus(\underline{n})
\end{align*} 
We say that $\uranus(x)$ {\em witnesses the $\omega$-inconsistency of $S$}.\footnote{The symbol ``$\uranus$'' (Uranus)  used here is meant to help the reader remember that its extension consists of non-standard numbers: The arrow may be taken to point outwards from the standard model $\mathbb{N}$.}
\end{dfn}

\begin{prop}\label{Prop: omega-consistent reflection rule}
If $\ZF$ is $\omega$-consistent, then $\ZF$ is closed under the {\sf Reflection rule}.
\end{prop}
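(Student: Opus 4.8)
The plan is to argue by contraposition: I will show that any failure of the \textnormal{\sf Reflection rule} for $\ZF$ produces a witness to the $\omega$-inconsistency of $\ZF$. So suppose, toward a contradiction with $\omega$-consistency, that there is an $\lang$-sentence $\phi$ with $\ZF \vdash \Prv_{\ZF}(\gquote{\phi})$ but $\ZF \not\vdash \phi$. My goal is to construct a formula $\uranus(x)$ satisfying the three clauses in the definition of $\omega$-inconsistency.

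The natural candidate is the proof relation underlying $\Prv_{\ZF}$. Write $\mathrm{Prf}_{\ZF}(x, y)$ for the primitive recursive relation ``$x$ codes a $\ZF$-derivation of the sentence with code $y$'', coded in $\ZF$ as in \S\ref{subsection: Term-calculus}, so that $\Prv_{\ZF}(y)$ is by definition $\exists x \dt \mathrm{Prf}_{\ZF}(x, y)$. I set $\uranus(x) :\equiv \mathrm{Prf}_{\ZF}(x, \gquote{\phi})$ and check the three conditions. The first, $\ZF \vdash \forall x \dt (\uranus(x) \rightarrow x \in \mathbb{N})$, holds because the proof relation is arithmetical in its first argument and $\ZF$ proves that any witness is a (code, hence a) natural number. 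The second, $\ZF \vdash \exists x \dt \uranus(x)$, is literally the hypothesis $\ZF \vdash \Prv_{\ZF}(\gquote{\phi})$. The third condition is where $\ZF \not\vdash \phi$ is used: since $\phi$ has no $\ZF$-proof, no standard number $n$ codes one, so the decidable statement $\mathrm{Prf}_{\ZF}(\underline{n}, \gquote{\phi})$ is numeralwise false for every $n \in \mathbb{N}$, and by numeralwise representability of primitive recursive relations $\ZF \vdash \neg\mathrm{Prf}_{\ZF}(\underline{n}, \gquote{\phi})$, i.e. $\ZF \vdash \neg\uranus(\underline{n})$, for each $n \in \mathbb{N}$. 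Hence $\uranus$ witnesses the $\omega$-inconsistency of $\ZF$, contradicting the assumption. Therefore, under $\omega$-consistency, $\ZF \vdash \Prv_{\ZF}(\gquote{\phi})$ forces $\ZF \vdash \phi$, which is exactly closure under the \textnormal{\sf Reflection rule}.

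The only step that is not pure bookkeeping about the coding apparatus — and thus the main (though entirely standard) obstacle — is the third clause: the passage from the external fact ``$n$ is not a $\ZF$-proof of $\phi$'' to the internal theorem $\ZF \vdash \neg\mathrm{Prf}_{\ZF}(\underline{n}, \gquote{\phi})$. This rests on the numeralwise decidability and representability of the primitive recursive proof relation in any set theory in the sense fixed in \S\ref{subsection: Term-calculus} (essentially $\Sigma^0_1$-completeness applied to a decidable predicate), which $\ZF$ certainly supports. Note that no separate appeal to the consistency of $\ZF$ is needed, since representability already delivers each instance $\ZF \vdash \neg\uranus(\underline{n})$ directly from $\ZF \not\vdash \phi$, and $\omega$-consistency subsumes consistency in any case.
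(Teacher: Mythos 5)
Your proposal is correct and is essentially the paper's own argument: both rest on numeralwise representability of the primitive recursive proof relation (giving $\ZF \vdash \neg\mathrm{Prf}_{\ZF}(\underline{n}, \gquote{\phi})$ for every standard $n$ whenever $\ZF \not\vdash \phi$) followed by an appeal to $\omega$-consistency to rule out $\ZF \vdash \Prv_{\ZF}(\gquote{\phi})$; you merely phrase it by contradiction, explicitly exhibiting the witness $\uranus$, where the paper argues by contraposition. If anything, your version is slightly more careful than the paper's, whose final line should read $\ZF \not\vdash \Prv_{\ZF}(\gquote{\phi})$ rather than $\ZF \vdash \neg\Prv_{\ZF}(\gquote{\phi})$.
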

\begin{proof}
Suppose $\ZF \not\vdash \phi$. Then $\ZF \vdash \neg \Pr_\ZF(\underline{n}, \gquote{\phi})$, for every standard $n \in \mathbb{N}$ (where $\Pr_\ZF(\underline{n}, \gquote{\phi})$ is the formula expressing that $\underline{n}$ is the G\"odel code of a proof of $\phi$). Now, by $\omega$-consistency, $\ZF \vdash \neg \Pr_\ZF(\gquote{\phi})$.
\end{proof}

Let $c$ be a fresh constant symbol. Note that the schema $\{\underline{m} < c < \omega \mid m \in \mathbb{N}\}$, expressing that there is a non-standard natural number, yields $\omega$-inconsistency when added to a set-theoretic system (proof: take $x = c$ as $\uranus$).

\begin{prop}
$\GR^\omega$ and $\FS$ are $\omega$-inconsistent. $\GR^\omega + \GR_{\GR^\omega}$ and $\FS + \GR_{\FS}$ are inconsistent.
\end{prop}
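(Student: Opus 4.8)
The plan is to treat all four assertions by a single McGee-style argument built around one self-referential ``infinite liar''. Using the syntactic term-calculus, iterate the truth predicate: for a fixed sentence $\gamma$ write $\sigma_0 \equiv \gamma$ and $\sigma_{n+1} \equiv \Tr(\gquote{\sigma_n})$, where $\gquote{\sigma_n}$ abbreviates a primitive-recursive coding term in $n$ and $\gquote{\gamma}$. By the diagonal lemma I take $\gamma$ to be (literally) the sentence $\exists n \in \mathbb{N} \dt \neg \sigma_{n+1}$, so that $\gamma \leftrightarrow \exists n \dt \neg \sigma_{n+1}$. I will show that each of $\GR^\omega$ and $\FS$ proves $\gamma$, and also proves $\sigma_{k+1}$ for every standard $k$. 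Then $\uranus(n) :\equiv n \in \mathbb{N} \wedge \neg \sigma_{n+1}$ witnesses $\omega$-inconsistency: $\exists n \dt \uranus(n)$ is just $\gamma$, while $\neg \uranus(\underline{k})$ is $\sigma_{k+1}$, so both $\vdash \exists n \dt \uranus(n)$ and $\vdash \neg\uranus(\underline k)$ (each $k$) hold.

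The first half, $\vdash \gamma$, is proved purely from the compositional axioms $\CT\restr$ common to both theories, with no use of the deductive rules. Applying $\CT_\exists$, $\CT_\wedge$ and $\CT_\neg$ to the fixed sentence $\gamma$, and using that the code of $\Tr(\gquote{\sigma_n})$ is, by the coding, the next term $\gquote{\sigma_{n+1}}$, one obtains the shifted equivalence $\sigma_1 \leftrightarrow \exists n \dt \neg \sigma_{n+2}$. Now reason inside the theory: $\neg\gamma$ gives $\forall n \dt \sigma_{n+1}$, hence both $\sigma_1$ and $\forall n \dt \sigma_{n+2}$; the latter contradicts $\sigma_1 \leftrightarrow \exists n \dt \neg \sigma_{n+2}$. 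Thus $\vdash \gamma$ in both theories. The second half, $\vdash \sigma_{k+1}$ for each standard $k$, is where the theories run parallel but by different engines. In $\FS$ it is immediate: apply $\NEC$ to $\vdash \gamma$ repeatedly. In $\GR^\omega$ the reflection axioms play the role of $\NEC$: if $\GR^\omega \vdash \sigma_k$ then, proofs being finite, $\GR^m \vdash \sigma_k$ for some $m$, so $\GR^\omega \vdash \Prv_{\GR^m}(\gquote{\sigma_k})$ by $\Sigma^0_1$-completeness, and $\GR_{\GR^m}$ (an axiom of $\GR^{m+1} \subseteq \GR^\omega$) then yields $\Tr(\gquote{\sigma_k})$, which is $\sigma_{k+1}$. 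Iterating from $\vdash \gamma$ gives $\vdash \sigma_{k+1}$ for all $k$, completing the first statement.

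For the inconsistency of $\GR^\omega + \GR_{\GR^\omega}$ and $\FS + \GR_\FS$ I would internalise this external iteration. Both theories prove the formalised form of their iteration principle as a single $\forall\phi$-statement: provability in $S$ of $\phi$ entails provability in $S$ of $\Tr(\phi)$ (for $\FS$ this is formalised $\NEC$, a theorem because $\NEC$ is a rule of the system; for $\GR^\omega$ it is the formalisation of the reflection-driven step above). Internal induction on $k$ then yields $\forall k \dt \Prv_{\udot S}(\gquote{\sigma_k})$ from $\Prv_{\udot S}(\gquote{\gamma})$, which holds as $\vdash \gamma$. Instantiating the newly added axiom $\GR_{\udot S}$ at these codes gives $\forall k \dt \Tr(\gquote{\sigma_k})$, i.e. $\forall n \dt \sigma_{n+1}$, which is $\neg\gamma$; with $\vdash \gamma$ this is a contradiction. (Equivalently, since $\GR_{\udot S}$ at $\gquote{\bot}$ proves $\Con_{\udot S}$, the collapse can be routed through the consistency statement.)

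The main obstacle is the derivation of $\vdash \gamma$ in the middle of the second paragraph: pushing $\Tr$ through the skeleton of $\gamma$. The clauses of $\CT\restr$ are given only for the connectives, quantifiers and the $\in$/$=$ atoms; there is \emph{no} clause evaluating $\Sat$ on a formula whose atoms already contain $\Sat$, so nested occurrences of $\Tr$ cannot be disquoted. The computation must therefore be arranged so that $\Tr$ is only ever commuted past the logical and arithmetical structure of $\gamma$, each surviving nested occurrence $\Tr(\gquote{\sigma_n})$ being \emph{absorbed} into the next coding term $\gquote{\sigma_{n+1}}$ rather than reduced. Checking that the satisfaction clauses behave correctly through the bound numerical variable, which occurs only inside the coding term and never in a satisfaction slot, is the delicate bookkeeping on which the whole result depends.
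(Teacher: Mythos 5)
Your overall architecture is the one the paper intends: the paper proves this proposition by adapting McGee's paradox from Halbach, and its only substantive addition to that citation is exactly your mechanism for $\GR^\omega$ (finiteness of proofs, formalized $\Sigma^0_1$-completeness, then the axiom $\GR_{\GR^m}$) standing in for $\NEC$; your internalisation of the iteration for the two inconsistency claims is likewise the standard route. The genuine gap is in the step you flag as ``delicate bookkeeping'', and it is not bookkeeping. In this paper $\Tr$ is \emph{defined} from the binary satisfaction relation, $\Tr(x) \equiv \forall f \in \VA \dt \Sat(x,f)$, and the quantifier axiom $\CT_\forall$ is stated with variable assignments, not with numeral substitution. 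Write $h(n)$ for your coding term $\gquote{\sigma_n}$. When you commute $\Tr$ past the existential quantifier of $\gamma \equiv \exists n \in \mathbb{N} \dt \neg\Tr(h(n))$, what $\CT_\exists$, $\CT_\wedge$, $\CT_\neg$ leave you with is $\exists a \in \mathbb{N} \dt \neg\Sat(\gquote{\Tr(h(n))}, n \mapsto a)$: satisfaction of the \emph{open} formula $\Tr(h(n))$ under an assignment. Your ``absorption'' step must identify this with $\neg\sigma_{a+2}$, i.e.\ with $\neg\Tr(h(a+1))$, where $h(a+1)$ codes the \emph{closed} sentence $\Tr(h(\underline{a}))$. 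Unwinding both sides as far as the axioms allow, that identification reduces to a substitution-regularity principle for coded $\Sat$-atoms, roughly $\Sat(\gquote{\Sat(h(n),f)}, \langle n,f\rangle \mapsto \langle a,b\rangle) \leftrightarrow \Sat(\gquote{\Sat(h(\underline{a}),f)}, f \mapsto b)$. No axiom of $\CT\restr$ says anything about coded $\Sat$-atoms (the atomic clauses are only $\CT_=$ and $\CT_\in$), and the principle can genuinely fail: in any model of $\ZF$, interpret $\Sat$ by the internal compositional recursion whose atomic clause declares a coded atom $\Sat(s,t)$ true iff $s$ is a closed term; this yields a model of $\CT$ in which the left side is false and the right side true. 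So the shifted equivalence $\sigma_1 \leftrightarrow \exists n \dt \neg\sigma_{n+2}$ is not derivable from the compositional axioms, contrary to your claim, and your derivation of $\vdash \gamma$ collapses; nor do $\NEC$ or the Global Reflection axioms obviously repair it, since they are equally silent about coded $\Sat$-atoms.

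This failure is precisely what the paper's modification (1) exists to avoid: every occurrence in Halbach's argument of ``truth of all substituted instances'', $\forall \vec{t} \dt T(\phi[\vec{t}/\vec{x}])$, must be replaced by ``satisfaction under all numerical assignments'', $\forall \vec{y} \in \omega \dt \Sat(\phi, \gquote{\vec{x}} \mapsto \vec{y})$. Concretely, the iteration cannot be your sequence of sentences $\sigma_{n+1} = \Tr(\gquote{\sigma_n})$; instead one diagonalises a single formula $\chi(v)$ satisfying, provably, $\chi(0) \leftrightarrow \Tr(\gquote{\gamma})$ and $\chi(n+1) \leftrightarrow \Sat(\gquote{\chi}, v \mapsto n)$, with $\gamma \equiv \exists n \in \mathbb{N} \dt \neg\chi(n)$. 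Then the compositional unwinding of $\Tr(\gquote{\gamma})$ stops at $\Sat(\gquote{\chi}, v \mapsto a)$ --- the same code, and a provably identical assignment, as is supplied by the instances of $\forall n \dt \chi(n)$ --- so the contradiction is obtained by Leibniz's law on the background relation $\Sat$ rather than by an unavailable substitution principle; and the ascent through the iterates is performed by applying $\NEC$ (or your $\GR$-simulation of it) to the provable sentences $\forall v \dt (v = \underline{k} \rightarrow \chi(v))$ and unwinding with $\CT_\forall$, $\CT_\rightarrow$ and the Tarski biconditionals for the $\lang$-formula $v = \underline{k}$. With that reformulation your two claims, and your treatment of $\GR^\omega + \GR_{\GR^\omega}$ and $\FS + \GR_{\FS}$, become the paper's proof. (A small further point: your parenthetical shortcut through $\Con_{\udot S}$ does not work as stated, since $S + \GR_{\udot S} \vdash \Con_{\udot S}$ is not by itself a contradiction; the McGee machinery is really needed.)
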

\begin{proof}
This is a corollary of McGee's paradox, see \cite{McG85}, and can be proved analogously as Theorem 13.9 and Corollary 14.39 in \cite{Hal14}, respectively. The arguments in \cite{Hal14} are written for theories of truth over $\PA$, not for theories of satisfaction over $\ZF$. But they go through with these natural modifications:
\begin{enumerate}
\item Replacing instances of the truth predicate ``$T$'' by our defined predicate ``$\Tr$'', except for instances quantifying over terms, of the form $\forall \vec{t} \dt T(\phi \udot[ \vec{t} \udot/ \gquote{\vec{x}} \udot])$, which are replaced by $\forall \vec{y} \in \omega \dt \Sat(\phi, \gquote{\vec{x}} \mapsto \vec{y})$, where $\vec{y}$ is fresh.
\item Replacing quantifiers of the form ``$\forall x$'' by ``$\forall x \in \omega$''.
\end{enumerate}

These arguments rely on that $\GR^\omega$ admits $\NEC$, which we proceed to show: Let $\sigma \in \sent(\lang_\Sat)$ and suppose that $\GR^\omega \vdash \sigma$. Then there is $k < \omega$, such that $\GR^k \vdash \sigma$. since this proof can be represented in $\GR^\omega$, we have $\GR^\omega \vdash \Pr_{\GR^k}(\gquote{\sigma})$, so that by Global Reflection, $\GR^\omega \vdash \Tr(\gquote{\sigma})$. Since $\GR^\omega \vdash \CT\restr$, it follows that $\GR^\omega \vdash \sigma$, as desired.
\end{proof}

So if we were to naturally extend the definition of $\GR^\alpha$ to all ordinals $\alpha$, then we would get that $\GR^\alpha$ is inconsistent for all $\alpha > \omega$.

Later on we will introduce a multiverse axiom, called $\textnormal{\sf Self-Perception}$, to the effect that the universe of the background theory is isomorphic (over $\lang$) to one of its internal universes; this axiom is motivated by the idea that the universe of the background theory should be available in its multiverse. The following lemmas establish technical results needed to validate that axiom.

\begin{lemma}\label{Lemma: Internal model iso}
Let $\mathcal{U}$ be a $\crsm$ of $\GR_0$ and let $\mathcal{V} \in \mathcal{U}$, such that $\mathcal{U}$ satisfies 
\[
\mathcal{V} \models \{\sigma \in \sent(\lang_\Sat) \mid \Tr(\sigma)\}.
\]
Then $\mathcal{U} \cong_\lang \mathcal{V}_\mathcal{U}$.
\end{lemma}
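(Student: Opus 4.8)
The goal is to apply the Canonicity Theorem (Theorem \ref{Thm: rec sat iso}) to conclude $\mathcal{U} \cong_\lang \mathcal{V}_\mathcal{U}$. That theorem requires four things of the two structures: both are $\crsm$s modelling $\ZF$, they are $\lang$-elementarily equivalent, and they have the same standard system. So the plan is to verify each of these four hypotheses for $\mathcal{U}$ and the externalization $\mathcal{V}_\mathcal{U}$, with the compositional and reflection axioms of $\GR_0$ doing the work of transferring truth between the internal $\Tr$-predicate and genuine satisfaction.

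Let me sketch the four verifications. First, $\mathcal{U}$ is a $\crsm$ of $\ZF$ by hypothesis (it is a $\crsm$ of $\GR_0$, which extends $\ZF$). For $\mathcal{V}_\mathcal{U}$: the hypothesis says $\mathcal{U}$ believes $\mathcal{V} \models \{\sigma \in \sent(\lang_\Sat) \mid \Tr(\sigma)\}$, so in particular $\mathcal{V}$ satisfies (internally) every $\lang$-theorem of $\ZF$ that $\Tr$ certifies. Using the compositional axioms $\CT_=, \CT_\in, \CT_\neg, \CT_\wedge, \CT_\forall$ of $\GR_0$, one shows that the externalized satisfaction of $\mathcal{V}_\mathcal{U}$ agrees with the internal $\Mod$/satisfaction relation of $\mathcal{V}$ inside $\mathcal{U}$ on genuine (standard) formulas — this is the key bridge, and it is established by a metatheoretic induction on the standard $\lang$-formula $\phi$ showing $\mathcal{V}_\mathcal{U} \models \phi(\vec a) \iff \mathcal{U} \models (\mathcal{V} \models \gquote{\phi}(\vec a))$. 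Granting that bridge, $\mathcal{V}_\mathcal{U} \models \ZF$ follows since $\ZF \subseteq \Tr$ (each axiom is a theorem, and $\GR_0$'s reflection gives $\Tr$ of it), and $\mathcal{V}_\mathcal{U}$ inherits recursive saturation and countability from $\mathcal{U}$ because $\mathcal{U}$ is itself a $\crsm$ (a recursive type over $\mathcal{V}_\mathcal{U}$ pulls back, via the bridge, to a recursive type realized in $\mathcal{U}$, hence in $\mathcal{V}_\mathcal{U}$).

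For elementary equivalence $\mathcal{U} \equiv_\lang \mathcal{V}_\mathcal{U}$: for a standard $\lang$-sentence $\sigma$, we have $\mathcal{U} \models \sigma$ iff (by Proposition \ref{Prop: T biconditionals}, the internal $\Tr$-biconditionals available in $\GR_0 \supseteq \CT\restr$) $\mathcal{U} \models \Tr(\gquote{\sigma})$, iff $\mathcal{U} \models (\mathcal{V} \models \gquote{\sigma})$ by the defining hypothesis on $\mathcal{V}$, iff $\mathcal{V}_\mathcal{U} \models \sigma$ by the bridge. For the standard systems, $\SSy(\mathcal{U}) = \SSy(\mathcal{V}_\mathcal{U})$: since $\mathcal{V}_\mathcal{U}$ sits definably inside $\mathcal{U}$ (its elements are exactly the $\mathcal{U}$-elements of the internal domain $N$, and $\mathcal{U}$ can refer to codes of subsets of $\omega$ in $\mathcal{V}$ via $\Tr$-certified arithmetic), a subset of $\omega$ coded in one is coded in the other; one direction uses that $\mathcal{V}$'s coded sets are visible to $\mathcal{U}$, the other that $\mathcal{U}$'s $\omega$ and $\mathcal{V}$'s $\omega$ agree on standard cuts because the bridge preserves arithmetic truth.

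The main obstacle I anticipate is the bridge lemma — carefully matching the externalized Tarskian satisfaction of $\mathcal{V}_\mathcal{U}$ against the internal $\Tr$-relation of $\mathcal{U}$ restricted to $\mathcal{V}$. The subtlety is that $\Tr$ inside $\mathcal{U}$ ranges over $\mathcal{U}$'s (possibly nonstandard) notion of $\lang_\Sat$-formula, whereas genuine satisfaction in $\mathcal{V}_\mathcal{U}$ only concerns standard $\lang$-formulas; the induction must stay on the standard, external side and invoke the compositional axioms only at standard formulas, where $\GR_0$ guarantees they behave Tarskian. One must also take care that the variable-assignment bookkeeping (translating $f \in \VA^{\mathcal{V}_\mathcal{U}}$ to codes $\mathcal{U}$ recognizes, using the $\langle \vec v \rangle \mapsto \langle \vec t \rangle$ machinery) goes through uniformly. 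Once the bridge is in place, the three Canonicity hypotheses fall out routinely and Theorem \ref{Thm: rec sat iso} delivers $\mathcal{U} \cong_\lang \mathcal{V}_\mathcal{U}$.
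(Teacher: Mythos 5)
Your plan has the same skeleton as the paper's proof: both reduce the lemma to Theorem \ref{Thm: rec sat iso} by checking that $\mathcal{V}_\mathcal{U}$ is a $\crsm$ modelling $\ZF$, that $\mathcal{U} \equiv_\lang \mathcal{V}_\mathcal{U}$, and that $\SSy(\mathcal{U}) = \SSy(\mathcal{V}_\mathcal{U})$. Your elementary-equivalence chain is exactly the paper's (absoluteness of $\models$ for standard formulas, Proposition \ref{Prop: T biconditionals}, and the hypothesis on $\mathcal{V}$ --- though note the hypothesis only gives the implication from $\Tr(\gquote{\sigma})$ to ``$\mathcal{V} \models \gquote{\sigma}$''; the converse requires $\CT_\neg$, which the paper invokes explicitly). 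Your pull-back-of-recursive-types argument for recursive saturation of $\mathcal{V}_\mathcal{U}$ is a sound, self-contained alternative to the paper's citation of Lemma 2.2 of \cite{GH10}. One small slip: $\GR^0$ is $\CT\restrp \Sep(\lang_\Sat)$ and contains no Global Reflection axiom, so ``$\GR_0$'s reflection'' is not what certifies the $\ZF$ axioms as true --- Proposition \ref{Prop: T biconditionals} does; this is harmless, since $\mathcal{V}_\mathcal{U} \models \ZF$ follows anyway from the elementary equivalence you establish.

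The genuine gap is in the standard-system step, specifically the inclusion $\SSy(\mathcal{U}) \subseteq \SSy(\mathcal{V}_\mathcal{U})$. A code $a$ for a set $A \in \SSy(\mathcal{U})$ is a nonstandard element of $(\omega^\mathcal{U})_\mathcal{U}$ which in general does \emph{not} lie in $\mathcal{V}_\mathcal{U}$, so your bridge --- which relates $\mathcal{V}_\mathcal{U} \models \phi(\vec{a})$ to $\mathcal{U} \models$ ``$\mathcal{V} \models \gquote{\phi}(\vec{a})$'' only for \emph{standard} $\phi$ with parameters \emph{from} $\mathcal{V}_\mathcal{U}$ --- cannot be applied to $a$ at all; and ``the standard cuts agree'' is vacuous (both are $\mathbb{N}$) and transfers no codes. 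What is needed is the device at the heart of the paper's argument: the internal numeral map $j$ sending $x \in (\omega^\mathcal{U})_\mathcal{U}$ to the unique $y$ with $\mathcal{U} \models y = \underline{x}^{\mathcal{V}}$, together with the fact that $j$ is an (initial) embedding, so that $j(a) \in \mathcal{V}_\mathcal{U}$ codes the same standard set $A$. Verifying that $j$ preserves the coding relation is not a standard-formula matter: the relevant internal sentences, of the form $\gquote{\underline{n} < \underline{a}}$ with $a$ nonstandard, are \emph{nonstandard} sentences, so one must argue inside $\mathcal{U}$ --- by induction using the compositional axioms and $\Sep(\lang_\Sat)$, cf.\ Proposition \ref{Prop: CT induction} --- that every internally true arithmetic fact about numerals belongs to $\mathbf{Tr}$ and hence holds in $\mathcal{V}$. (Alternatively one can bypass $j$: by recursive saturation of $\mathcal{U}$, realize the recursive type over $(\mathcal{U}, a, \mathcal{V})$ asserting that $x$ is an internal-to-$\mathcal{V}$ subset of $\omega$ such that, for each standard $n$, $\mathcal{V}$ satisfies $\gquote{\underline{n} \in x}$ if and only if $n$ is in the set coded by $a$; a realization is then a code for $A$ in $\mathcal{V}_\mathcal{U}$. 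But some such idea must be supplied; your sketch contains none.) Your other direction, $\SSy(\mathcal{V}_\mathcal{U}) \subseteq \SSy(\mathcal{U})$ via definability of $\mathcal{V}_\mathcal{U}$'s satisfaction over $\mathcal{U}$ plus Separation truncated at a nonstandard bound, is fine, and is a reasonable substitute for the paper's initiality argument.
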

Recall that $\mathcal{V}_\mathcal{U}$ is the $\mathcal{U}$-externalization of $\mathcal{V}$, see Definition \ref{Dfn: externalization}.
\begin{proof}
We shall establish $\mathcal{U} \cong_\lang \mathcal{V}_\mathcal{U}$ by invoking Theorem \ref{Thm: rec sat iso}. Thus we need to show that $\mathcal{V}_\mathcal{U}$ is a $\crsm$, that $\SSy(\mathcal{U}) = \SSy(\mathcal{V}_\mathcal{U})$ and that $\mathcal{U} \equiv_\lang \mathcal{V}_\mathcal{U}$.

Note that $\mathcal{U}$ is $\omega$-non-standard, by $\mathcal{U} \in \crsmb$ and Proposition \ref{Prop: rec sat omega-non-standard}. That $\mathcal{V}_\mathcal{U}$ is a $\crsm$ now follows from Lemma 2.2 in \cite{GH10}.\footnote{That Lemma is stated for $\ZFC$, but it is easily seen that its proof does not make use of Choice.}

$(\omega^\mathcal{U})_\mathcal{U}$ is mapped initially into $(\omega^{(\mathcal{V}_\mathcal{U})})_{(\mathcal{V}_\mathcal{U})}$ by an embedding $j$ (for each $x \in (\omega^\mathcal{U})_\mathcal{U}$, $j(x)$ is defined as the unique $y \in \mathcal{U}$ such that $\mathcal{U} \models y = \underline{x}^\mathcal{V}$). Therefore, we obtain $\SSy(\mathcal{U}) = \SSy(\mathcal{V}_\mathcal{U})$ as follows: Let $A \in \SSy(\mathcal{U})$, coded by $a \in (\omega^\mathcal{U})_\mathcal{U}$. Since $j$ is an embedding, $j(a)$ is a code for $A$ in $(\omega^{(\mathcal{V}_\mathcal{U})})_{(\mathcal{V}_\mathcal{U})}$. Conversely, let $B \in \SSy(\mathcal{V}_\mathcal{U})$, coded by $b \in (\omega^{(\mathcal{V}_\mathcal{U})})_{(\mathcal{V}_\mathcal{U})}$. Since $j$ is initial and $\mathcal{U}$ is $\omega$-non-standard, there is a non-standard $c \in (\omega^\mathcal{U})_\mathcal{U}$, such that $j(c) \leq^{\mathcal{V}_\mathcal{U}} b$. So since $j$ is an embedding, $c$ is a code for $B$ in $(\omega^\mathcal{U})_\mathcal{U}$. 

To see $\mathcal{U} \equiv_\lang \mathcal{V}_\mathcal{U}$, let $\phi$ be a sentence of $\mathcal{L}$. By absoluteness of $\models$ for standard formulas, 
\[\mathcal{V}_\mathcal{U} \models \phi \iff \mathcal{U} \models \text{``} \mathcal{V} \models (\ulcorner \phi \urcorner) \text{''}.\]
Since $\Sat$ satisfies the Tarski-biconditionals for $\lang$, we have
\[\mathcal{U} \models \phi \iff \mathcal{U} \models \Tr(\ulcorner \phi \urcorner).\]
Moreover, by $\CT_\neg$ and the condition of the Lemma,
\[\mathcal{U} \models \Tr(\ulcorner \phi \urcorner) \iff \mathcal{U} \models \text{``} \mathcal{V} \models (\ulcorner \phi \urcorner) \text{''}.\]
By combining these we obtain $\mathcal{U} \equiv_\lang \mathcal{V}_\mathcal{U}$, as desired.
\end{proof}

\begin{rem*} $\mathcal{U} \models \ZF$ is needed for the proof of this lemma, as it relies on Theorem \ref{Thm: rec sat iso}. Thus, the authors do not expect it to generalize to $\GR_0(S)$, unless $S \vdash \ZF$.
\end{rem*}

\begin{lemma}\label{Lemma: GR internal model}
Let $k < \alpha \leq \omega$, and let $\mathcal{U} \models \GR^\alpha$. Then there is $\mathcal{V} \in \mathcal{U}$, such that $\mathcal{U}$ satisfies
\[\mathcal{V} \in \crsmb \wedge \mathcal{V} \models \{\sigma \in \sent(\lang_\Sat) \mid \Tr(\sigma)\}.\]
In particular, $\mathcal{U}$ satisfies that $\mathcal{V} \models \GR^k$. Moreover, if $\mathcal{U} \in \crsmb$, then $\mathcal{U} \cong_\lang \mathcal{V}_\mathcal{U}$.
\end{lemma}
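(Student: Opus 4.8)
The plan is to work entirely inside $\mathcal{U}$, producing the desired $\mathcal{V}$ by applying the internal Completeness Theorem to the $\mathcal{U}$-definable theory of all true $\lang_\Sat$-sentences, and then saturating. Write $T^* =_\df \{\sigma \in \sent(\lang_\Sat) \mid \Tr(\sigma)\}$, which is a genuine set in $\mathcal{U}$ because $\GR^\alpha$ contains $\GR^0$ and hence $\Sep(\lang_\Sat)$. Since $k < \alpha \leq \omega$, the axiom $\GR_{\GR^k}$ lies in $\GR^{k+1} \subseteq \GR^\alpha$, so $\mathcal{U}$ satisfies $\forall \phi \dt (\Prv_{\GR^k}(\phi) \rightarrow \Tr(\phi))$. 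As every axiom of $\GR^k$ is $\GR^k$-provable, $\mathcal{U}$ thereby believes that each such axiom lies in $T^*$; in particular all $\lang$-axioms of $\ZF$ lie in $T^*$.

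The heart of the argument is to show $\mathcal{U} \models \Con_{T^*}$. First I would establish the \emph{internal soundness} of $\Tr$, namely that $\mathcal{U}$ proves $\forall \psi \dt (\Prv_{T^*}(\psi) \rightarrow \Tr(\psi))$, i.e. that the set of true sentences is deductively closed. This is proved by induction on the length of a (possibly $\mathcal{U}$-nonstandard) derivation: the compositional axioms $\CT_=$, $\CT_\in$, $\CT_\neg$, $\CT_\wedge$, $\CT_\forall$ (and their derived companions) show that logical axioms are true and that modus ponens and generalization preserve truth, while the premises drawn from $T^*$ are true by construction. The induction ranges over an $\lang_\Sat$-formula and so is licensed by the transfinite induction of Proposition \ref{Prop: CT induction}, available since $\GR^0$ contains $\CT\restr + \Sep(\lang_\Sat)$. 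Combining internal soundness with $\mathcal{U} \models \neg\Tr(\gquote{\bot})$ (an instance of the Tarski biconditionals, Proposition \ref{Prop: T biconditionals}) yields that $T^*$ proves no contradiction, i.e. $\mathcal{U} \models \Con_{T^*}$.

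With consistency in hand I would invoke the Completeness Theorem inside $\mathcal{U}$: since $T^*$ is a set in the recursive, hence countable, language $\lang_\Sat$, $\mathcal{U}$ obtains a countable model $\mathcal{V}_0 \models T^*$. Applying Theorem \ref{Thm: rec sat complete} inside $\mathcal{U}$ produces a countable recursively saturated elementary extension $\mathcal{V} \succ \mathcal{V}_0$. As $T^*$ is a set of sentences, $\mathcal{V} \models T^*$, and since $\ZF \subseteq T^*$ we get $\mathcal{V}\restr_\lang \models \ZF$, so $\mathcal{V} \in \crsmb$ from $\mathcal{U}$'s perspective. Note that Theorem \ref{Thm: rec sat complete} cannot be applied to $T^*$ directly, as $T^*$ is not recursive; this is why the model is first built by completeness and only then saturated. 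This $\mathcal{V}$ is the required witness, and because $\GR^k \subseteq T^*$ while $\mathcal{V} \models T^*$, $\mathcal{U}$ satisfies $\mathcal{V} \models \GR^k$.

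Finally, for the "moreover" clause, suppose $\mathcal{U} \in \crsmb$. Then $\mathcal{U}$ is a $\crsm$ with $\mathcal{U}\restr_\lang \models \ZF$ and $\mathcal{U} \models \GR^\alpha \supseteq \GR^0$, and we have just shown that $\mathcal{U}$ satisfies $\mathcal{V} \models T^*$; these are exactly the hypotheses of Lemma \ref{Lemma: Internal model iso}, which delivers $\mathcal{U} \cong_\lang \mathcal{V}_\mathcal{U}$. The main obstacle is the consistency step: internal soundness of compositional truth is delicate precisely because the derivations it must traverse may be nonstandard, and it is the full strength of Separation for $\lang_\Sat$, via Proposition \ref{Prop: CT induction}, that makes the induction go through.
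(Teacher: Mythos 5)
Your proposal is correct in substance and reaches the same three conclusions by the same overall skeleton (form $\mathbf{Tr}$ by $\Sep(\lang_\Sat)$, prove its consistency inside $\mathcal{U}$, apply Theorem \ref{Thm: rec sat complete}, read off $\GR^k \subseteq \mathbf{Tr}$ from $\GR_{\GR^k}$, and finish with Lemma \ref{Lemma: Internal model iso}), but the central consistency step is argued by a genuinely different route. The paper never proves internal soundness of $\Tr$: it observes that if $\sigma$ is any finite conjunction of members of $\mathbf{Tr}$, then $\Sat(\sigma)$ by $\wedge$-compositionality, while a proof of $\udot\neg\sigma$ in $\GR^0 \supseteq \ZF + \Sep(\lang_\Sat)$ (in particular, in pure logic) would give $\Sat(\udot\neg\sigma)$ by the Global Reflection axiom $\GR_{\GR^0}$ --- available precisely because $k < \alpha$ forces $\alpha \geq 1$ --- contradicting $\neg$-compositionality; consistency of $\mathbf{Tr}$ follows with no induction on derivations at all. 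You instead prove outright that $\mathbf{Tr}$ is deductively closed, by induction on (possibly nonstandard) derivation length licensed by Proposition \ref{Prop: CT induction}, and then get consistency from $\neg\Tr(\gquote{\bot})$. This is heavier machinery --- it needs truth of all logical axioms, hence a substitution lemma and treatment of atomic formulas with complex terms, none of which the paper's $\CT_-$ axioms address explicitly --- but it has a direct precedent in the paper's own Lemma \ref{Lemma: CM soundness}, so it is acceptable at the paper's level of rigor. What your route buys is the slightly sharper information that $\Con(\mathbf{Tr})$ is already provable from $\GR^0$ resources alone, with the Global Reflection axioms entering only to place $\GR^k$ (and in particular $\udot{\ZF}$, needed for $\mathcal{V} \in \crsmb$) inside $\mathbf{Tr}$; what the paper's route buys is a two-line argument immune to the delicacies of internal soundness. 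One small correction: your claim that Theorem \ref{Thm: rec sat complete} ``cannot be applied to $T^*$ directly, as $T^*$ is not recursive'' misreads that theorem --- its final clause applies to every consistent theory in a \emph{recursive language}, with no recursiveness assumption on the theory itself; your completeness-then-saturate detour merely re-proves that clause, so nothing is damaged, but it is unnecessary.
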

\begin{proof}
We work in $\mathcal{U}$. From $\ZF + \Sep(\lang_\Sat)$ we get that the set $\mathbf{Tr} = \{\phi \in \sent(\lang_\Sat) \mid \Tr(\phi)\}$ exists. For the first statement, by completeness of the $\crsm$-semantics (Theorem \ref{Thm: rec sat complete}), it suffices to establish $\Con(\mathbf{Tr})$; and for this it suffices to establish $\Con(\sigma)$, where $\sigma$ is an arbitrary finite conjunction of sentences in $\mathbf{Tr}$. By $\wedge$-compositionality of $\Sat$, we have $\Sat(\sigma)$. By $\GR_{\ZF + \Sep(\lang_\Sat)}$, we have $\Prv_{\ZF + \Sep(\lang_\Sat)}(\udot\neg\sigma) \rightarrow \Sat(\udot\neg\sigma)$, and by $\neg$-compositionality of $\Sat$, we have $\Sat(\udot\neg\sigma) \leftrightarrow \neg \Sat(\sigma)$. So since $\Sat(\sigma)$, we obtain $\Prv_{\ZF + \Sep(\lang_\Sat)}(\udot\neg\sigma) \rightarrow \bot$, whence $\Con(\sigma)$. By Theorem \ref{Thm: rec sat complete}, we can let $\mathcal{V}$ be a model of $\mathbf{Tr}$ in $\crsmb$.

It follows from $\GR_{\GR^k}$ that $\GR^k \subseteq \mathbf{Tr}$, yielding the second statement.

The last statement now follows from Lemma \ref{Lemma: Internal model iso}.
\end{proof}

\begin{ax}
Let $\iota$ be a function symbol and $\self$ be a constant symbol. $\Iso(x)$ denotes an $\lang^+_\iota$-formula expressing that $x$ is an $\lang^+$-structure and that $\iota$ is an $\in$-isomorphism from the universe $V$ onto $x$. We shall study this axiom in $\lang^+_{\iota, \self}$:
\[
\Iso(\self)
\]
(This formulation is chosen over $\exists x \dt \Iso(x)$, as it is convenient to have a reference to a witness.)
\end{ax}

By the $\in$-isomorphism property, and the absoluteness of $\models$ for standard formulas, we have:
\begin{prop}\label{Prop: iso elementarily equiv}
For each $\phi(\vec{x}) \in \lang$, $\ZF + \Iso(\self) \vdash (\self \models \gquote{\phi(\iota(\vec{x}))}) \leftrightarrow \phi(\vec{x})$.
\end{prop}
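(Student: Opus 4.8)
The plan is to fix the $\lang$-formula $\phi$ and prove the biconditional by an \emph{external} induction on the syntactic complexity of $\phi$, drawing on exactly the two ingredients flagged in the preceding remark: the compositionality of the $\ZF$-definable satisfaction relation of the set-structure $\self$ when applied to a standard coded formula (``absoluteness of $\models$ for standard formulas''), and the content of $\Iso(\self)$, namely that $\iota$ is a bijection of $V$ onto the domain of $\self$ carrying $\in$ to $\in^\self$. I read $\self \models \gquote{\phi(\iota(\vec x))}$ as $\self \models (\gquote{\phi}, \vec x \mapsto \iota(\vec x))$, i.e. the internal satisfaction of the code of $\phi$ in $\self$ under the assignment sending each $x_i$ to $\iota(x_i) \in \self$. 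Because $\phi$ is fixed in the meta-theory, $\gquote{\phi}$ is a standard numeral, so $\ZF$ proves that this satisfaction unfolds clause by clause along the build-up of $\phi$; the induction then matches each such clause against $\phi(\vec x)$ itself.

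First I would dispatch the atomic and Boolean cases. For $x_i = x_j$ the unfolded clause reads $\iota(x_i) = \iota(x_j)$, equivalent to $x_i = x_j$ by injectivity of $\iota$; for $x_i \in x_j$ it reads $\iota(x_i) \in^\self \iota(x_j)$, equivalent to $x_i \in x_j$ by the $\in$-isomorphism property. Atomic formulas involving the arithmetic and syntactic symbols of $\lang$ are treated identically, using that these symbols are $\in$-definable and hence that their interpretations are transported by $\iota$ (so that $\iota$ commutes with term evaluation). The connectives $\neg$ and $\wedge$ follow immediately from the corresponding compositional clauses of the internal satisfaction relation together with the induction hypothesis and classical meta-logic.

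The heart of the argument is the quantifier case $\phi \equiv \exists y\,\psi(y,\vec x)$. The compositional clause gives $\self \models \gquote{\phi(\iota(\vec x))}$ iff there is $b \in \self$ with $\self \models \gquote{\psi(b,\iota(\vec x))}$. For the direction from $\self$ to $V$, the \emph{onto} part of $\Iso(\self)$ lets me write such a witness as $b = \iota(y)$ for some $y \in V$, and the induction hypothesis turns $\self \models \gquote{\psi(\iota(y),\iota(\vec x))}$ into $\psi(y,\vec x)$, yielding $\exists y\,\psi(y,\vec x)$. Conversely, any $y$ with $\psi(y,\vec x)$ gives, by the induction hypothesis, an element $\iota(y) \in \self$ witnessing the internal existential. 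Combining the cases closes the induction and hence establishes the schema.

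I expect the surjectivity of $\iota$ used in the quantifier case to be the only genuine obstacle: it is what forces the word ``onto'' into the definition of $\Iso$ and is precisely the property that a mere elementary embedding would not supply. Two further points deserve care. The induction must be run externally, one instance per $\phi$, so that no satisfaction predicate for $\lang$ over $V$ is required; the only Tarskian satisfaction invoked is that of the set-structure $\self$, which $\ZF$ defines outright, and its compositional clauses are exactly what absoluteness for standard formulas provides. Finally, nowhere does the argument appeal to Separation or Replacement for $\lang^+_{\iota,\self}$-formulas, which is consistent with $\iota$ being a class function lying outside the scope of the schemas of $\ZF$.
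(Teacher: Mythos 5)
Your proof is correct and matches the paper's intended argument: the paper derives this proposition in one line from exactly the two ingredients you use, namely the $\in$-isomorphism property of $\iota$ (injectivity and preservation of $\in$ for the atomic cases, surjectivity for the quantifier case) and the absoluteness of $\models$ for standard formulas (your external, formula-by-formula Tarskian unfolding). Your write-up simply makes explicit the induction the paper leaves implicit, so there is nothing to correct.
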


\begin{sys}\label{Sys: SR}
Let $S$ be a set theory in $\lang^+$. By recursion, for each $\alpha \leq \omega$, we define the system $\SP^\alpha(S)$ (standing for Self-Perception):
\begin{align*}
\SP^0(S) &=_\df \GR^0(S) = \CT(S) \restrp \Sep(\lang^+_\Sat) \\
\SP^{\alpha+1}(S) &=_\df \GR^{\alpha+1}(S) + \self \in \crsmb + \Iso(\self) + \self \models \mathbf{Tr} \cup \udot{\SP}^{\underline{\alpha}}(\udot S) \\
\SP^\omega(S) &=_\df \bigcup_{n < \omega} \SP^n(S),
\end{align*}
where $\mathbf{Tr} =_\df \{\sigma \in \sent(\lang_\Sat) \mid \Tr(\sigma)\}$.
We write $\SP^\alpha$ for $\SP^\alpha(\ZF)$.
\end{sys}

\begin{rem*} A clarificatory note on the role of the languages in $\SP^\alpha$. Let $\alpha \geq 1$. The language of $\SP^\alpha$ is $\lang_{\Sat, \iota, \self}$. $\SP^\alpha$ proves Separation for $\lang_{\Sat, \self}$, and it proves Replacement for $\lang_\self$ (since $\self$ can be treated as a parameter in these schemas). 
\end{rem*}

\begin{lemma}\label{Lem: model of CT expands to Iso}
Let $\alpha \leq \omega$. Every $\crsm$ $\mathcal{U}$ of $\GR^\alpha$ expands to a model $\mathcal{U}^*$ of $\SP^\alpha$. Moreover, if $\mathcal{U}$ is a definable model, then we can also obtain that $\mathcal{U}^*$ is definable.
\end{lemma}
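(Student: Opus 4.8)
The plan is to induct on $\alpha \leq \omega$, proving the statement in a form that is a theorem of $\ZF$ so that it may be reapplied \emph{inside} a model at lower levels. The case $\alpha = 0$ is immediate, since $\SP^0 = \GR^0$ involves no new symbols and $\mathcal{U}^* = \mathcal{U}$ works. In every other case the recipe is uniform: interpret the constant $\self$ by a suitable internal witness $\mathcal{V}^* \in \mathcal{U}$, and interpret the function symbol $\iota$ by an \emph{external} $\lang$-isomorphism $h \colon \mathcal{U} \cong_\lang (\mathcal{V}^*)_\mathcal{U}$, regarded as a (total) function on $\mathcal{U}$. The conceptual heart of the construction is that $h$, although not definable in $\mathcal{U}$ — were it definable, Replacement would send $V$ onto the set $\dom(\self)$ and collapse the universe — is nonetheless a perfectly good interpretation of $\iota$; this is exactly why $\SP^\alpha$ withholds $\iota$ from its Separation and Replacement schemas, keeping them at $\lang_{\Sat,\self}$ and $\lang_\self$ respectively.

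For the successor step $\alpha = \beta + 1$, observe first that $\mathcal{U} \in \crsmb$, since $\GR^{\beta+1} \vdash \ZF$ and $\mathcal{U}$ is a $\crsm$. Applying Lemma \ref{Lemma: GR internal model} with $k = \beta$ gives $\mathcal{V} \in \mathcal{U}$ with $\mathcal{U} \models (\mathcal{V} \in \crsmb \wedge \mathcal{V} \models \mathbf{Tr} \wedge \mathcal{V} \models \GR^\beta)$ together with an isomorphism $h \colon \mathcal{U} \cong_\lang \mathcal{V}_\mathcal{U}$. Working inside $\mathcal{U}$, the element $\mathcal{V}$ is a definable-with-parameter $\crsm$ of $\GR^\beta$, so by the induction hypothesis in its definable form (a theorem of $\ZF$, hence available in $\mathcal{U}$) it expands to a definable $\mathcal{V}^* \models \SP^\beta$; as the new relation $\iota^{\mathcal{V}^*}$ is then a definable subclass of the set $\dom(\mathcal{V}) \times \dom(\mathcal{V})$, Separation makes $\mathcal{V}^*$ an element of $\mathcal{U}$. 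Since $\mathcal{V}^*$ shares its $\lang$-reduct with $\mathcal{V}$ and its $\lang_\Sat$-reduct satisfies $\mathbf{Tr}$, setting $\self^{\mathcal{U}^*} = \mathcal{V}^*$ and $\iota^{\mathcal{U}^*} = h$ validates each conjunct of $\SP^{\beta+1}$: the base $\GR^{\beta+1}$ is inherited from $\mathcal{U}$, $\self \in \crsmb$ and $\self \models \mathbf{Tr} \cup \SP^\beta$ hold by construction, and $\Iso(\self)$ holds because $h$ is an $\in$-isomorphism onto $(\mathcal{V}^*)_\mathcal{U}$.

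The delicate case is the limit $\alpha = \omega$: here $\SP^\omega$ requires $\self$ to model $\SP^n$ for \emph{every} standard $n$, a self-referential demand that cannot be discharged by invoking the $\omega$-case internally. I would instead exploit the recursive saturation of $\mathcal{U}$. Consider the recursive type
\[
p(x) = \{\, \text{``}x \in \crsmb\text{''},\ \text{``}x \models \mathbf{Tr}\text{''}\,\} \cup \{\, \text{``}x \models \SP^n\text{''} : 1 \leq n < \omega \,\}
\]
over $\mathcal{U}$, each condition being a single $\lang_\Sat$-formula since $\mathbf{Tr}$ and each $\SP^n$ are definable. Its finite realizability is precisely the already-established finite cases applied inside $\mathcal{U}$: for fixed $n$, Lemma \ref{Lemma: GR internal model} supplies an internal $\mathcal{V}_0 \models \mathbf{Tr}$, whence $\mathcal{V}_0 \models \GR^n$, and the case $\alpha = n$ of the lemma (a $\ZF$-theorem) expands $\mathcal{V}_0$ to a model of $\SP^n$ still satisfying $\mathbf{Tr}$. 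Realizing $p$ by some $\mathcal{V}^* \in \mathcal{U}$ and setting $\self^{\mathcal{U}^*} = \mathcal{V}^*$, $\iota^{\mathcal{U}^*} = h$ — now with $h \colon \mathcal{U} \cong_\lang (\mathcal{V}^*)_\mathcal{U}$ furnished by Lemma \ref{Lemma: Internal model iso}, which applies as $\mathcal{U} \models \mathcal{V}^* \models \mathbf{Tr}$ and $\mathcal{U}$ is a $\crsm$ of $\GR^0$ — yields a model of every $\SP^n$, hence of $\SP^\omega$.

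Finally I would check the schemas and the definability clause. Since $\self$ occurs only as a constant, $\mathcal{U}^*$ inherits $\Sep(\lang_{\Sat,\self})$ and $\Rep(\lang_\self)$ from $\mathcal{U}$, and the absence of $\iota$ from these schemas is what averts the collapse noted above. For the ``moreover'' clause, definability must be threaded through the whole induction: when $\mathcal{U}$ is definable, the witnesses of Lemmas \ref{Lemma: GR internal model} and \ref{Lemma: Internal model iso} can be chosen definably (using the canonical constructions underlying Theorems \ref{Thm: rec sat complete} and \ref{Thm: rec sat iso}), and the type-realization at the limit can be made a definable choice; this definable form is also exactly what legitimizes the internal appeals to the induction hypothesis in the successor and limit steps. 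I expect the limit case to be the main obstacle, as it is there that the self-reference of $\SP^\omega$ must be reconciled with a single internal witness — which recursive saturation accomplishes only because its finite approximations are guaranteed by the finite cases.
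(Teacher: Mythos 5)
Your proposal is correct and follows essentially the same route as the paper's own proof: induction on finite $\alpha$ using Lemma \ref{Lemma: GR internal model} and the internally applied ($\ZF$-internalized) induction hypothesis, with $\self$ interpreted by the internal witness and $\iota$ by the external isomorphism, then recursive saturation to realize the type $\{x \in \crsmb \wedge x \models \mathbf{Tr} \cup \udot{\SP}^{\underline{n}} \mid n < \omega\}$ at $\alpha = \omega$, followed by Lemma \ref{Lemma: Internal model iso}, with definability threaded through exactly as in the paper. Your additional observations (why $\iota$ must be excluded from the Separation and Replacement schemas, and why the expansion is a set of $\mathcal{U}$) are correct refinements of points the paper leaves implicit.
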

\begin{proof}
We start by showing the case $\alpha < \omega$ by induction. The base case $\alpha = 0$ is trivial. The induction hypothesis is that if $\mathcal{U} \models \GR^{\alpha}$, then $\mathcal{U}$ expands to a model of $\SP^\alpha$; and if $\mathcal{U}$ is definable, then the expansion is definable. Let $\mathcal{U} \models \GR^{\alpha + 1}$. Applying Lemma \ref{Lemma: GR internal model}, we find a $\crsm$ $\mathcal{V}$ in $\mathcal{U}$, such that $\mathcal{U} \cong_\lang \mathcal{V}_\mathcal{U}$ (as witnessed by an $\lang$-isomorphism $i : \mathcal{U} \rightarrow \mathcal{V}_\mathcal{U}$) and $\mathcal{U}$ satisfies that $\mathcal{V} \models \mathbf{Tr} \cup \udot{\GR}^{\underline{\alpha}}$. So by the induction hypothesis applied in $\mathcal{U}$, $\mathcal{U}$ satisfies that $\mathcal{V}$ expands to a model $\mathcal{W}$ of $\udot{\SP}^{\underline{\alpha}}$. Let $\mathcal{U}^*$ be the model obtained from $\mathcal{U}$ by interpreting $\self$ by $\mathcal{W}$ and interpreting $\iota$ by $i$. It is immediate from the construction that $\mathcal{U}^* \models \SP^{\alpha + 1}$.

Now to the case that $\alpha = \omega$: Assume that $\mathcal{U} \models \GR^\omega$. Working in $\mathcal{U}$, let $\mathbf{Tr} = \{\phi \in \sent(\lang_\Sat) \mid \Tr(\phi)\}$. By the above, 
\[
\{x \in \crsmb \wedge x \models \mathbf{Tr} \cup \udot{\SP}^{\underline{n}} \mid n < \omega\}
\]
is a recursive type over $\mathcal{U}$. So since $\mathcal{U}$ is recursively saturated, it is realized by some $\mathcal{W}$ in $\mathcal{U}$. It now follows from Lemma \ref{Lemma: Internal model iso} that $\mathcal{U} \cong_\lang \mathcal{W}_\mathcal{U}$. So just as in the former case, $\mathcal{U}$ can be expanded to a model $\mathcal{U}^*$ of $\SP^\omega$.

Assume now that $\mathcal{U}$ is definable. $\mathcal{W}$ can then be defined as the least element of a definable enumeration of $\mathcal{U}$ that satisfies the appropriate conditions. An isomorphism witnessing $\mathcal{U} \cong_\lang \mathcal{W}_\mathcal{U}$ can also be defined: As seen from a close look at the proof of Theorem \ref{Thm: rec sat iso}, the isomorphism is constructed by recursion on enumerations of $\mathcal{U}$ and $\mathcal{W}_\mathcal{U}$, both of which can be chosen definable since $\mathcal{U}$ and $\mathcal{W}_\mathcal{U}$ are definable and countable.
\end{proof}

\begin{lemma}\label{Lemma: GR^omega interprets SP^omega}
$\GR^\omega$ interprets $\SP^\omega$.
\end{lemma}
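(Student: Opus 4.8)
The plan is to interpret $\SP^\omega$ in $\GR^\omega$ by \emph{relativization to a definable model}. Since $\SP^\omega = \bigcup_{n<\omega}\SP^n$ and interpretations preserve provability, it suffices to exhibit a single $\lang_{\Sat,\iota,\self}$-structure $\mathcal{V}^*$, definable in $\GR^\omega$, such that for \emph{each} standard $n$ one has $\GR^\omega \vdash (\mathcal{V}^* \models \udot{\SP}^{\underline n})$, where $\models$ is the $\ZF$-definable satisfaction relation for set models and $\udot{\SP}^{\underline n}$ is the represented theory $\SP^n$. The interpretation $\mathcal{I}$ is then the relativization $\phi \mapsto \phi^{\mathcal{V}^*}$, generated by structural recursion from the assignments $\in \mapsto {\in}^{\mathcal{V}^*}$, $\Sat \mapsto \Sat^{\mathcal{V}^*}$, $\self \mapsto \self^{\mathcal{V}^*}$, $\iota \mapsto \iota^{\mathcal{V}^*}$. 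Indeed, given $\psi$ with $\SP^\omega \vdash \psi$, I fix $n$ with $\SP^n \vdash \psi$, so $\GR^\omega \vdash \Prv_{\udot{\SP}^{\underline n}}(\gquote\psi)$; provable soundness of set-model satisfaction (a theorem of $\ZF \subseteq \GR^\omega$) then yields $\GR^\omega \vdash \psi^{\mathcal{V}^*}$, as required.

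To build the $\lang_\Sat$-reduct $\mathcal{V}$ of $\mathcal{V}^*$, I would first rerun, in the background theory instead of inside a model, the consistency argument from the proof of Lemma~\ref{Lemma: GR internal model}: using $\wedge$- and $\neg$-compositionality of $\Sat$ together with the axiom $\GR_{\GR^0}$, every finite conjunction of sentences of $\mathbf{Tr} = \{\sigma \in \sent(\lang_\Sat) \mid \Tr(\sigma)\}$ is consistent, so $\GR^\omega \vdash \Con(\mathbf{Tr})$. By Theorem~\ref{Thm: rec sat complete} and a definable choice of a least witness, $\GR^\omega$ proves the existence of a \emph{definable} $\crsm$ $\mathcal{V}$ with $\mathcal{V} \models \mathbf{Tr}$. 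Since each $\GR_{\GR^n}$ is an axiom of $\GR^\omega$, every $\GR^n$-theorem lies in $\mathbf{Tr}$, whence $\GR^\omega \vdash (\mathcal{V} \models \udot{\GR}^{\underline n})$ for each standard $n$.

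I would then expand $\mathcal{V}$ to $\mathcal{V}^*$ by carrying out the $\omega$-case of the proof of Lemma~\ref{Lem: model of CT expands to Iso} \emph{inside} $\GR^\omega$, applied to this definable $\mathcal{V}$. The recursive type $p(x) = \{x \in \crsmb\} \cup \{x \models \mathbf{Tr} \cup \udot{\SP}^{\underline m} \mid m < \omega\}$ is finitely satisfiable in $\mathcal{V}$: for each standard $n$, from $\mathcal{V} \models \udot{\GR}^{\underline{n+1}}$ and the formalized Lemmas~\ref{Lemma: GR internal model} and~\ref{Lem: model of CT expands to Iso} one gets $\mathcal{V} \models \exists x (x \in \crsmb \wedge x \models \mathbf{Tr} \cup \udot{\SP}^{\underline n})$. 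As $\mathcal{V}$ is recursively saturated, $p$ is realized by a definable $\mathcal{W} \in \mathcal{V}$, and Lemma~\ref{Lemma: Internal model iso} supplies a definable $\lang$-isomorphism $i$ witnessing $\mathcal{V} \cong_\lang \mathcal{W}_\mathcal{V}$. Interpreting $\self$ by $\mathcal{W}$ and $\iota$ by $i$ yields $\mathcal{V}^*$, and $\GR^\omega$ proves that $\mathcal{V}^*$ satisfies $\self \in \crsmb$, $\Iso(\self)$ and $\self \models \mathbf{Tr} \cup \udot{\SP}^{\underline{n-1}}$; combined with $\mathcal{V} \models \udot{\GR}^{\underline n}$ this delivers $\GR^\omega \vdash (\mathcal{V}^* \models \udot{\SP}^{\underline n})$ for each standard $n$.

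The main obstacle is to keep the entire argument \emph{schematic} in $n$. One establishes $\GR^\omega \vdash (\mathcal{V}^* \models \udot{\SP}^{\underline n})$ separately for each standard $n$, and must avoid the single uniform statement asserting that $\mathcal{V}^*$ models $\udot{\SP}^{\underline n}$ for all internal $n$ at once: that would give $\mathcal{V}^* \models \SP^\omega$ internally, hence $\mathcal{V}^* \models \GR^\omega$ and $\Con(\GR^\omega)$, impossible by G\"odel's second theorem and precisely the inconsistency recorded by $\GR^\omega + \GR_{\GR^\omega} \vdash \bot$. The non-uniformity is unavoidable because $\GR^\omega$ proves $\GR_{\GR^n}$ for each $n$ but $\GR^\omega \not\vdash \GR_{\GR^\omega}$; realizing $p$ by recursive saturation respects this, since it only provides the standard-indexed instances $\mathcal{W} \models \udot{\SP}^{\underline m}$ and never an internal quantification over all $m$. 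A secondary point is verifying that the expansion remains in $\crsmb$ and that the external isomorphism $i$ may serve as the interpretation of $\iota$; both follow as in the proof of Lemma~\ref{Lem: model of CT expands to Iso} via resplendency of countable recursively saturated models.
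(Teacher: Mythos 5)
Your reduction in the opening paragraph is fine, and most of your scaffolding (a $\GR^\omega$-definable $\crsm$ of the truth set $\mathbf{Tr}$, expanded by an internal model plus an isomorphism, then interpretation via satisfaction in the expanded structure) is exactly the paper's. The gap is the step that produces $\mathcal{W}$: you cannot realize the type $p(x)=\{x\in\crsmb\}\cup\{x \models \mathbf{Tr}\cup\udot{\SP}^{\underline m} \mid m<\omega\}$ ``by recursive saturation'' on the strength of the schematic facts you have. For $\mathcal{W}$ to be definable, and for the statements $\mathcal{V}\models\text{``}\mathcal{W}\in\crsmb \wedge \mathcal{W}\models \mathbf{Tr}\cup\udot{\SP}^{\underline n}\text{''}$ to be \emph{theorems} of $\GR^\omega$ (your final assembly needs both), the appeal to recursive saturation must take place inside $\GR^\omega$; but there, ``$\mathcal{V}$ is recursively saturated'' speaks of \emph{internal} types. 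Its hypothesis is the single internal statement $\forall m\in\omega \dt \mathcal{V}\models\exists x\,(x\in\crsmb \wedge x\models \mathbf{Tr}\cup\SP^{m})$ with $m$ ranging over all internal (including nonstandard) numbers, and its conclusion is the equally internal statement that $\mathcal{W}$ satisfies \emph{all} instances. Your per-standard-$n$ theorems (one separate proof for each $n$, via $\mathcal{V}\models\udot{\GR}^{\underline{n+1}}$) do not add up to that hypothesis; worse, neither the hypothesis nor the conclusion is provable, since either one yields, inside $\GR^\omega$, a model of every internal $\GR^{m}$ and hence $\Con(\udot{\GR}^{\omega})$ --- the very consequence your ``obstacle'' paragraph correctly rules out by G\"odel's second incompleteness theorem. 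So that paragraph refutes your own construction: the claim that realizing $p$ ``only provides the standard-indexed instances and never an internal quantification over all $m$'' is backwards, because internal recursive saturation provides nothing but the internal quantification; while realizing the type externally, in the meta-theory about an actual model, forfeits both the definability of $\mathcal{W}$ and provability in $\GR^\omega$, hence yields no interpretation at all.

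What the paper does instead --- and what is missing from your proof --- is a device that converts the schematic facts into a single internal application: the $\omega$-inconsistency of $\GR^\omega$. Fix $\uranus(x)$ with $\GR^\omega\vdash\exists x<\omega\dt\uranus(x)$ but $\GR^\omega\vdash\neg\uranus(\underline n)$ for each standard $n$. Working in $\GR^\omega$, let $d$ be the least witness, and using $\Sep(\lang_\Sat)$ let $c\leq d$ be maximal with $\mathcal{S}\models\GR^{c}$, where $\mathcal{S}$ is the definable $\crsm$ of $\mathbf{Tr}$. Then $c$ is one \emph{internal} index satisfying $\GR^\omega\vdash \underline n< c$ for every standard $n$, so the internalized Lemma \ref{Lem: model of CT expands to Iso} can be applied once, at level $c$, to produce a definable expansion $\mathcal{S}'$ with $\mathcal{S}'\models\udot{\SP}^{c}$; since for each standard $n$ it is provable that every axiom of $\udot{\SP}^{\underline n}$ is a theorem of $\udot{\SP}^{c}$, the single map $\sigma\mapsto(\mathcal{S}'\models\gquote{\sigma})$ interprets all of $\SP^\omega$. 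Your argument becomes correct once the schematic type-realization step is replaced by this (or an equivalent) nonstandard-cutoff construction.
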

\begin{proof}
Since $\GR^\omega$ is $\omega$-inconsistent, there is a formula $\uranus(x)$ such that $\GR^\omega \vdash \exists x < \omega \dt\uranus(x)$, but for each $n \in \mathbb{N}$, $\GR^\omega \vdash \neg\uranus(\underline{n})$.

We start by working in $\GR^\omega$. By $\ZF + \Sep(\lang_\Sat)$, the theory $\mathbf{Tr} = \{\sigma \in \sent(\lang_\Sat) \mid \Tr(\sigma)\}$ of truth is a set; and by the argument starting the proof of Lemma \ref{Lemma: GR internal model}, it is a consistent theory. So there is a definable $\mathcal{S} \in \crsmb$ that models truth.\footnote{This follows from the proof of Theorem 2.4.1 in \cite{CK90}. The key observation to see that the model is definable is that it is essentially a Henkin-construction by recursion on an enumeration of a recursive language and on an enumeration of the set of all recursive subsets of that language, both of which can be chosen definable. } Since $\mathcal{S}$ is a model of truth, the Global Reflection axioms allow us to prove $\mathcal{S} \models \GR^{\underline{n}}$, for each standard natural number $n$. Let $d < \omega$ be the minimal number such that $\uranus(d)$, and let $c$ be the maximal number such that $c \leq d$ and $\mathcal{S} \models \GR^c$. Note that for each standard natural number $n$, we can prove $\underline{n} < c$. By Lemma \ref{Lem: model of CT expands to Iso}, $\mathcal{S}$ expands to a definable model $\mathcal{S}'$ of $\SP^c$. 

Working in the meta-theory, it follows that $\GR^\omega$ interprets $\SP^\omega$ by an interpretation $\mathcal{J}$ mapping each sentence $\sigma$ in the language of $\SP^\omega$ to the $\lang_\Sat$-sentence $\mathcal{S}'  \models \gquote{\sigma}$.
\end{proof}

\section{Revision-semantic truth-in-a-universe}\label{sec: construction}

We shall now go through the key revision-semantic technique introduced in this paper, which may be used to construct a variety of untyped truth-in-a-universe relations for the multiverse of set theory. Revision-semantics was independently invented in \cite{Gup82}, \cite{Her82a} and \cite{Her82b}. In \cite{FS87}, the axiomatic theory of truth $\FS$ was presented and shown to be validated by a model constructed through such a revision process. The revision process starts with an arbitrary extension $S_0$ of truth, and recursively defines $S_{n + 1}$ as the theory of the structure $(\mathbb{N}, S_n)$. In particular, the theory of $\mathbb{N}$ is a subset of $S_1$, and the liar sentence is in $S_{n+1}$ iff it is not in $S_{n}$. 

The construction in this paper is somewhat different in that it is intensional. We start with a more-or-less arbitrary formula {\em defining} truth-in-a-universe, and revise the definition in a revision-semantic fashion. The construction can be modified by adjusting parameters. For example, we shall see that certain conditions on the parameters result in that the eventual definition of truth-in-a-universe validates the multiverse theory $\MS$, introduced in System \ref{Sys: MS}. This theory is analogous to $\FS$, but is actually weaker. The parameters need to satisfy some basic conditions as specified in this definition:

\begin{dfn}\label{dfn: appropriate}
Let $\Trm_n(\phi)$, $\Unirm_n(\mathcal{U})$ and $\Modrm_0(\mathcal{U}, \phi, f)$ be formulas of the meta-language ($\lang$), in the free variables $\{n, \phi\}$, $\{n, \mathcal{U}\}$ and $\{\mathcal{U}, \phi, f\}$, respectively.\footnote{Even though the $n$ is notationally in subscript-position, it is a free variable of the formulas $\Trm$ and $\Unirm$. This pattern will also be used for the formula $\Modrm$ introduced below.} For each $n \in \mathbb{N}$: 
\begin{align*}
\Tb_n &=_\df \{\phi \mid \Trm_n(\phi)\} \\
\Tb_\omega &=_\df \bigcup_{n < \omega} \Tb_n \\
\Unib_n &=_\df \{\mathcal{U} \mid \Unirm_n(\mathcal{U})\} \\
\Unib_\omega &=_\df \bigcap_{n < \omega} \Unib_n
\end{align*}
$(\Tb_n)_{n \in \mathbb{N}}$ is intended to be a sequence of first-order set theories, and $(\Unib_n)_{n \in \mathbb{N}}$ is intended to be a sequence of classes of models, as formally specified below. Let $\lang^\Trm, \lang^\Rev$ be recursive languages. We say that $\Trm, \Unirm, \Modrm_0$ are {\em revision parameters (in $\lang^\Trm, \lang^\Rev$)} if the following conditions (closed under $\forall n \in \mathbb{N}$, where appropriate) are provable in the meta-theory and in a given set theory (as object theory):
\begin{enumerate}
\item $\lang \subseteq \lang^\Rev \subseteq \lang^\Trm$
\item ``The symbols $\Uni, \Mod$ do not appear in $\lang^\Trm$.''
\item ``$\Tb_0$ is a set theory in $\lang^\Trm$.''
\item $ \Tb_{n+1} \vdash \Tb_{n} $
\item $ \Unirm_0(\mathcal{U}) \rightarrow \text{``$\mathcal{U}$ is an $\lang^\Trm$-structure.''}  $
\item $ \Unib_{n+1} \subseteq \Unib_n $
\item $ \Modrm_0(\mathcal{U}, \phi, f) \rightarrow \Unirm_0(\mathcal{U}) \wedge \phi \in \lang^\Rev_{\Uni, \Mod} \wedge f \in \VA^{\mathcal{U}} $
\end{enumerate}
\end{dfn}

In the construction below we shall see how, given revision parameters, an untyped revision-semantic truth-in-a-universe predicate can be defined as an $\lang$-formula $\Modrm_n(\mathcal{U}, \phi, f)$, with this intended reading of the variables: $n$ is the stage in the revision process, $\mathcal{U}$ is a universe, $\phi$ is a formula in (the representation of) $\lang^\Rev_{\Uni, \Mod}$ and $f$ is an assignment of variables to elements of $\mathcal{U}$. So $\lang^\Trm$ is the language of the theories $\Tb_n$, $\lang^\Rev$ is any sublanguage of $\lang^\Trm$, and $\lang^\Rev_{\Uni, \Mod}$ is the language undergoing revision. The $\lang$-formula $\Modrm^\circlearrowright$ is also introduced as a variant of $\Modrm_n$. Actually, only the $\Unirm_n$ and $\Modrm_0$ parameters influence the construction. The $\Trm_n$ parameter comes into play later on, in the Main Lemma (in \S \ref{Sec: Interpret}), where we show (under certain conditions on the revision parameters) that the $\Modrm_n$ formula satisfies desirable semantically motivated axioms when constructed in the theory $\Tb_\omega$. 

The construction may intuitively be thought of as a recursive procedure, where $\Modrm_0$ is a more-or-less arbitrary truth-in-a-universe relation and each $\Modrm_{n+1}$ revises $\Modrm_n$ into a more adequate relation. It turns out to be efficient to perform the construction using G\"odel's fixed-point lemma. It is in fact possible to choose $\Modrm_0$ such that it gets revised to itself (see the definition of $\Modrm^\circlearrowright$ below). This phenomenon contrasts with the revision-semantics ordinarily used to construct a model of the Friedman--Sheard theory of truth, where the revision-operation has no fixed-point (see Lemma 14.9(iii) in \cite{Hal14}). A key difference between the present revision-process and that one is that the former is intensional and the latter is extensional. In the present framework we start with an arbitrary formula {\em defining} truth-in-a-universe and revise it to more adequate definitions, while in the other framework one starts with an arbitrary {\em extension} of truth and revise it more adequate extensions. The move from extensional to intensional revision-semantics is highly relevant for the present framework.

\begin{constr}[Construction of Revision-semantics for the Multiverse]\label{Constr: Multiverse semantics}
Let $\Trm_n(\phi)$, $\Unirm_n(\mathcal{U})$ and $\Modrm_0(\mathcal{U}, \phi, f)$ be revision parameters.

By G\"odel's fixed-point lemma, there is an $\lang$-formula $\Modrm_n(\mathcal{U}, \phi, f)$, in the free variables $n,\mathcal{U}, \phi, f$, such that provably:
\[
\Modrm_n(\mathcal{U}, \phi, f) \leftrightarrow \left(
	\begin{aligned}
	& n \in \mathbb{N} \wedge \Unirm_n(\mathcal{U}) \wedge \phi \in \lang^\Rev_{\Uni, \Mod} \wedge f \in \VA^{\mathcal{U}} \\
	\wedge & \big(n = 0 \rightarrow \Modrm_0(\mathcal{U}, \phi, f) \big) \\
	\wedge & \big( n>0 \rightarrow \langle \mathcal{U}\restr_{\lang^\Rev}, \gquote{\Unirm_{\underline{n-1}}}^\mathcal{U}, \gquote{\Modrm_{\underline{n-1}}}^\mathcal{U} \rangle \models (\phi, f) \big)
	\end{aligned}
	\right) \tag{$\dagger$}
\]
Recall that if $\phi(\vec{x})$ is a formula in the language of a structure $\mathcal{M}$, then $\phi^\mathcal{M} = \{\vec{a} \in \mathcal{M} \mid \mathcal{M} \models \phi(\vec{a})\}$. Above, this notation is used for formulas in a represented language, hence the G\"odel quotes, $\gquote{}$. Working in $\ZF$, $\langle \mathcal{U}\restr_{\lang^\Rev}, \gquote{\Unirm_{\underline{n-1}}}^\mathcal{U}, \gquote{\Modrm_{\underline{n-1}}}^\mathcal{U} \rangle$ is the expansion of $\mathcal{U}\restr_{\lang^\Rev}$ to $\lang^\Rev_{\Uni, \Mod}$, interpreting $\gquote{\Mod}$ by $\{\vec{a} \in \mathcal{U} \mid \mathcal{U} \models \gquote{\Modrm_{\underline{n-1}}}(\vec{a})\}$, and interpreting $\gquote{\Uni}$ by $\{u \in \mathcal{U} \mid \mathcal{U} \models \gquote{\Unirm_{\underline{n-1}}}(u)\}$.

Formally, we now have two references for the expression ``$\Modrm_0$'', the formula $\Modrm_0$ and the formula $\Modrm_n$, with the variable assignment $n \mapsto 0$. However, it is clear that these are equivalent.

The above construction works for a very wide range of choices for $\Modrm_0$. But by the fixed-point lemma, we can choose $\Modrm_0$ to be ``equivalent to its own revision'', so that $\Modrm_n$ turns out to be constant with respect to $n$. Indeed, there is an $\lang$-formula $\Modrm^\circlearrowright$, such that provably:
\[
\Modrm^\circlearrowright(\mathcal{U}, \phi, f) \leftrightarrow \left(
	\begin{aligned}
	& \Unirm_0(\mathcal{U}) \wedge \phi \in \lang^\Rev_{\Uni, \Mod} \wedge f \in \VA^{\mathcal{U}} \\
	\wedge & \langle \mathcal{U}\restr_{\lang^\Rev}, \gquote{\Unirm_{\underline{0}}}^\mathcal{U}, \gquote{\Modrm^\circlearrowright}^\mathcal{U} \rangle \models (\phi, f) 
	\end{aligned}
	\right)
\]
\begin{flushright} 
End of Construction.
\end{flushright}
\end{constr}

For the reader familiar with the extensional revision-procedure used to construct a model of $\FS$ (this construction is reasonably well-known for $\FS$ formulated over $\PA$, see \cite[ch. 14.1]{Hal14} for a detailed treatment), note how the recursive call in the fixed-point formula of our construction operates on an intension (the formula $\Modrm$), which is interpreted in internal models. In contrast, the recursive call of the revision procedure for constructing a model of $\FS$ (say as a theory of truth over arithmetic) operates on an extension (the set of true sentences from the previous step), which is obtained from the external model. The authors take this to explain why it is possible to define a truth-in-a-universe relation $\Modrm^\circlearrowright$ which is fixed by the revision-procedure. In the extensional revision-semantics, this is not possible simply because the liar sentence must switch truth-value in the external model at every step of the revision. 

Necessity was the mother of the intensional revision-semantics of this paper; the authors do not see any way to construct models of ``the Copernican multiverse of sets'' (as formalized by various theories in this paper, e.g. $\CM + \textsf{Non-Triviality} + \NEC$) by the extensional approach. Conversely, the authors do not see that the intensional approach can replace the extensional approach, as the former relies on that the intension acted upon in the recursive call is interpreted in an internal model. For arithmetic this may be a serious obstacle, as arithmetic does not have that kind of internal models.

Some conditions and rules for revision parameters, relevant for showing that the $\Modrm_n$ formula constructed as above satisfies a desirable semantically motivated theory (see the Main Lemma in \S \ref{Sec: Interpret}), are shown in Figure \ref{Fig: Revision}. If one of the rules holds, we say that the revision parameters admit it. Essentially, if the revision parameters admit $\NEC^*$ or $\CONEC^*$, then the multiverse theory interpreted admits $\NEC$ or $\CONEC$, respectively. In practice it is often easier to work with the other rule and conditions in Figure \ref{Fig: Revision}, using this Lemma:

\begin{figure}
\caption{Rules and conditions for the revision parameters}
\label{Fig: Revision}

\vspace{6pt}
\begin{center} 
{\bf Revision rules}
\end{center}
\[
\begin{array}{ll}
\NEC^* & \forall n \in \mathbb{N} \dt \forall \phi \in \lang^\Rev \dt \big( (\Tb_{n} \vdash \phi) \rightarrow ( \Tb_{n+1} \vdash \forall \mathcal{U} \in \Unib_{n+1} \dt (\mathcal{U} \models \gquote{\phi})) \big) \\
\CONEC^* & \forall n \in \mathbb{N} \dt \forall \phi \in \lang^\Rev \dt \big( ( \Tb_{n+1} \vdash \forall \mathcal{U} \in \Unib_{n+1} \dt (\mathcal{U} \models \gquote{\phi})) \rightarrow \Tb_{n+2} \vdash \phi \big) \\
\textnormal{\sf Reflection rule}^* & \forall n \in \mathbb{N} \dt \forall \phi \in \lang^\Rev \dt \big(( \Tb_{n+1} \vdash \Pr_{\Tb_{n}}(\gquote{\phi})) \rightarrow \Tb_{n+2} \vdash \phi \big)
\end{array}
\]

\begin{center} 
{\bf Revision conditions}
\end{center}
\[
\begin{array}{ll}
\textnormal{\sf Soundness}^* & \forall n \in \mathbb{N} \dt \forall \mathcal{U} \in \Unib_{n+1} \dt (\mathcal{U} \models \Tb_{n}) \\
\textnormal{\sf Completeness}^* & \forall n \in \mathbb{N} \dt \forall \phi \in \lang^\Rev \dt \big( ( \forall \mathcal{U} \in \Unib_{n+1} \dt (\mathcal{U} \models \phi)) \rightarrow \Tb_{n}\vdash \phi \big) 
\end{array}
\]

\end{figure}

\begin{lemma}\label{Lemma: Admissible rules on revision parameters}
Let $\Trm, \Unirm, \Modrm_0$ be revision parameters. 
\begin{enumerate}[{\rm (a)}]
\item If $\textnormal{\sf Soundness}^*$ is provable, then the parameters admit $\NEC^*$.
\item If $\textnormal{\sf Completeness}^*$ is provable and the parameters admit the $\textnormal{\sf Reflection rule}^*$, then the parameters admit $\CONEC^*$.
\end{enumerate}
\end{lemma}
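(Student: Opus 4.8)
The plan is to obtain each rule from its semantic counterpart by running the formalized soundness/completeness theorem for first-order logic inside the appropriate theory $\Tb_{n+1}$; these are available because every base set theory under consideration implements basic model theory. Throughout I use that, by conditions 3--4 of Definition \ref{dfn: appropriate}, each $\Tb_m$ extends a fixed base set theory, so whenever $\textnormal{\sf Soundness}^*$ or $\textnormal{\sf Completeness}^*$ is provable it is in particular a theorem of $\Tb_{n+1}$, and that $\Pr_{\Tb_n}$, $\Unib_{n+1}$ and $\Tb_n$ are meaningful over the $\lang^\Trm$-structures in $\Unib_{n+1}$ (conditions 5--6).

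For (a), fix $n$ and $\phi \in \lang^\Rev$ with $\Tb_n \vdash \phi$. By provable $\Sigma^0_1$-completeness applied to the witnessing proof, $\Tb_{n+1} \vdash \Pr_{\Tb_n}(\gquote{\phi})$. Now argue inside $\Tb_{n+1}$: the instance of $\textnormal{\sf Soundness}^*$ at $n$ gives $\forall \mathcal{U} \in \Unib_{n+1} \dt (\mathcal{U} \models \Tb_n)$, and the formalized soundness theorem gives that every model of $\Tb_n$ satisfies each $\Tb_n$-provable sentence; since $\Pr_{\Tb_n}(\gquote{\phi})$, every such $\mathcal{U}$ satisfies $\gquote{\phi}$. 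As this was carried out in $\Tb_{n+1}$, we conclude $\Tb_{n+1} \vdash \forall \mathcal{U} \in \Unib_{n+1} \dt (\mathcal{U} \models \gquote{\phi})$, which is the conclusion required by $\NEC^*$.

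For (b), fix $n$ and $\phi \in \lang^\Rev$ and assume $\Tb_{n+1} \vdash \forall \mathcal{U} \in \Unib_{n+1} \dt (\mathcal{U} \models \gquote{\phi})$. Since $\textnormal{\sf Completeness}^*$ is a theorem of $\Tb_{n+1}$, its instance at $n$ and $\phi$ yields $\Tb_{n+1} \vdash \big( (\forall \mathcal{U} \in \Unib_{n+1} \dt (\mathcal{U} \models \gquote{\phi})) \rightarrow \Pr_{\Tb_n}(\gquote{\phi}) \big)$. Modus ponens with the assumption gives $\Tb_{n+1} \vdash \Pr_{\Tb_n}(\gquote{\phi})$. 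Because the parameters admit the $\textnormal{\sf Reflection rule}^*$, this delivers $\Tb_{n+2} \vdash \phi$, the conclusion required by $\CONEC^*$.

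The only delicate point is the level-bookkeeping between the object level and the provability level. In (a) the soundness argument must be performed inside $\Tb_{n+1}$ so that its output is a $\Tb_{n+1}$-provability, which is exactly why the external datum $\Tb_n \vdash \phi$ has to be internalized via provable $\Sigma^0_1$-completeness. In (b) the essential feature is that $\textnormal{\sf Completeness}^*$ produces one provability predicate too many, namely a $\Tb_{n+1}$-proof that $\phi$ is merely $\Tb_n$-provable, so the $\textnormal{\sf Reflection rule}^*$ is indispensable for descending to a genuine $\Tb_{n+2}$-proof of $\phi$. The indices $n, n+1, n+2$ must be matched precisely against those in the rules, but beyond this bookkeeping nothing exceeds the formalized soundness/completeness theorem together with the standard derivability conditions; I also tacitly identify ``$\mathcal{U} \models \phi$'' in $\textnormal{\sf Completeness}^*$ with ``$\mathcal{U} \models \gquote{\phi}$'' in $\CONEC^*$, both being internal satisfaction of the coded $\lang^\Rev_{\Uni, \Mod}$-formula.
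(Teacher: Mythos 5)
Your proof is correct and takes essentially the same route as the paper's: for (a), encode the external proof to obtain $\Pr_{\Tb_n}(\gquote{\phi})$ inside the object theory and combine it with $\textnormal{\sf Soundness}^*$ plus formalized first-order soundness; for (b), chain the internalized $\textnormal{\sf Completeness}^*$ with the $\textnormal{\sf Reflection rule}^*$ exactly as the paper does. The only cosmetic difference is that the paper runs the argument for (a) in the meta-theory $\ZF$ and then transfers it via $\Tb_{n+1} \vdash \ZF$, whereas you internalize it directly into $\Tb_{n+1}$, making explicit the formalized soundness step that the paper compresses into ``combining these.''
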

\begin{proof}
\begin{enumerate}[{\rm (a)}]
\item Let $n \in \mathbb{N}$ and $\phi \in \lang^\Rev$. Assume that $\textnormal{\sf Soundness}^*$ is provable (in the meta-theory $\ZF$) and that $\Tb_n \vdash \phi$. By the latter, encoding the proof in $\ZF$, we have $\ZF \vdash \Pr_{\Tb_n}(\gquote{\phi})$. Combining these, we get $\ZF \vdash \forall \mathcal{U} \in \Unib_{n+1} \dt (\mathcal{U} \models \gquote{\phi}))$. Since $\Tb_{n+1} \vdash \ZF$, we are done.

\item Let $n \in \mathbb{N}$ and $\phi \in \lang^\Rev$. Assume that $\textnormal{\sf Completeness}^*$ is provable and the parameters admit the $\textnormal{\sf Reflection rule}^*$. Since $\Tb_{n+1} \vdash \ZF$, we have $\Tb_{n+1} \vdash \textnormal{\sf Completeness}^*$. Now suppose that $\Tb_{n+1} \vdash \forall \mathcal{U} \in \Unib_{n+1} \dt (\mathcal{U} \models \gquote{\phi})$. Then $\Tb_{n+1} \vdash \Pr_{\Tb_n}(\gquote{\phi})$. So by the $\textnormal{\sf Reflection rule}^*$, $\Tb_{n+2} \vdash \phi$, as desired. \qedhere
\end{enumerate}
\end{proof}

\section{Theories of untyped satisfaction for the multiverse}\label{Sec: multiverse theory}

Section \ref{sec: construction} showed how a revision-semantic relation of truth-in-a-universe can be constructed in set theory. We turn now to the task of finding appropriate axioms for truth-in-a-universe that are validated by such revision-constructions.

\begin{sys}[$\CM$]\label{Sys: CM}
$\CM^-$, standing for {\em Compositional satisfaction for the Multiverse}, is axiomatized as follows:
\[
\begin{array}{ll}
\mathsf{Base} & \ZF + \Sep(\lang_{\Uni, \Mod})  + \Rep(\lang_{\Uni, \Mod})  \\
\CM_= & \forall \mathcal{U} \in \Uni\dt \forall f \in \VA^\mathcal{U} \dt  \big(\Mod(\mathcal{U}, \gquote{x = y}, f) \leftrightarrow f(x) = f(y))\big) \\
\CM_\neg & \forall \mathcal{U} \in \Uni\dt \forall \phi \in \lang_{\Uni, \Mod} \dt \forall f \in \VA^\mathcal{U} \dt  \big(\Mod(\mathcal{U}, \udot\neg\phi, f) \leftrightarrow \neg \Mod(\mathcal{U}, \phi, f) \big) \\
\CM_\wedge & \forall \mathcal{U} \in \Uni\dt \forall \phi, \psi \in \lang_{\Uni, \Mod} \dt \forall f \in \VA^\mathcal{U} \dt  \big(\Mod(\mathcal{U}, \phi \udot\wedge \psi, f) \leftrightarrow (\Mod(\mathcal{U}, \phi, f) \wedge \Mod(\mathcal{U}, \psi, f)) \big) \\
\CM_\forall & \forall \mathcal{U} \in \Uni\dt \forall \phi \in \lang_{\Uni, \Mod} \dt \forall f \in \VA^\mathcal{U} \dt  \big(\Mod(\mathcal{U}, \udot\forall u \dt \phi, f) \leftrightarrow \forall g \in \VA^\mathcal{U}_{f, u} \dt \Mod(\mathcal{U}, \phi, g) \big) \\
\end{array}
\]

Define $\ZF_{\Uni, \Mod} =_\df \ZF + \Sep(\lang_{\Uni, \Mod})  + \Rep(\lang_{\Uni, \Mod})$. We write $\CM$ for $\CM^-$ plus the axiom:
\[
\begin{array}{ll}
\mathsf{Multiverse}_\ZF & \forall \mathcal{U} \in \Uni \dt \forall \sigma \in \udot{\ZF}_{\Uni, \Mod} \dt \Mod(\mathcal{U}, \sigma) \\
\end{array} 
\]

If $\lang'$ expands $\lang$, then we write $\CM^-(\lang')$ and $\CM(\lang')$ for the corresponding systems obtained by replacing all occurrences of $\lang_{\Uni, \Mod}$ in the axioms of the form $\CM_-$ above by the language $\lang'_{\Uni, \Mod}$. (So the Separation and Replacement schemas remain unchanged, ranging only over $\lang_{\Uni, \Mod}$.)
\end{sys}

\begin{rem*} The natural analogue axioms $\CM_\vee, \CM_\rightarrow, \CM_\exists$ are easily derived in $\CM^-$.
\end{rem*}

\begin{rem*} In $\CM^-$, each $\mathcal{U} \in \Uni$ may be viewed as an $\lang_{\Uni, \Mod}$-structure, by performing this assignment: 
\begin{align*}
\in^\mathcal{U} &=_\df \big\{\langle a, b \rangle \mid \Mod(\mathcal{U}, \gquote{x \in y}, \langle x, y \rangle \mapsto \langle a, b \rangle)\big\} \\
\Uni^\mathcal{U} &=_\df \big\{a \mid \Mod(\mathcal{U}, \gquote{\Uni(x)}, x \mapsto a)\big\} \\
\Mod^\mathcal{U} &=_\df \big\{\langle a, b, c \rangle \mid \Mod(\mathcal{U}, \gquote{\Mod(x, y, z)}, \langle x, y, z \rangle \mapsto \langle a, b, c \rangle)\big\}
\end{align*}
\end{rem*}
Accordingly, we will occasionally use the notation $\mathcal{U} \models \phi$ for satisfaction in that $\lang_{\Uni, \Mod}$-structure. Using the compositional axioms of $\CM^-$, it is easily shown that $\Mod(\mathcal{U}, \phi, f) \iff \mathcal{U} \models (\phi, f)$.

In applications, it is natural to add further axioms to $\CM^-$, ensuring e.g. that we can prove:
\[
\begin{array}{ll}
\textnormal{\sf Non-Triviality} & \exists \mathcal{U} \Uni(\mathcal{U}) \\
\end{array}
\]
Note that over $\CM^-$, {\sf Non-Triviality} is equivalent to $\Tr^\Box(\gquote{\bot}) \rightarrow \bot$. Recall that the formulas $\Tr^\Box(\sigma)$ and $\Tr^\Diamond(\sigma)$ are defined as $\forall \mathcal{U} \in \Uni\dt (\Mod(\mathcal{U}, \sigma))$ and $\neg \Tr^\Box(\udot\neg \sigma)$, respectively. We may naturally consider the interpretation of the modal operators $\Box, \Diamond$, generated by interpreting $\Box \sigma$ by $\Tr^\Box(\gquote{\sigma})$. Therefore, it is useful to exhibit some compositional conditions easily provable for $\Tr^\Box$ in $\CM^-$ and $\CM^- + \textnormal{\sf Non-trivility}$:

\begin{prop}\label{Prop: CM box comp}
$\CM^-$ proves:
\[
\begin{array}{ll}
  \CM^\Box_\rightarrow & \forall \phi, \psi \in \sent(\lang_{\Uni, \Mod}) \dt \big( \Tr^\Box(\phi \udot\rightarrow \psi) \rightarrow (\Tr^\Box(\phi) \rightarrow \Tr^\Box(\psi)) \big) \\
  \CM^\Box_\leftrightarrow & \forall \phi, \psi \in \sent(\lang_{\Uni, \Mod}) \dt \big( \Tr^\Box(\phi \udot\leftrightarrow \psi) \rightarrow (\Tr^\Box(\phi) \leftrightarrow \Tr^\Box(\psi)) \big) \\
  \CM^\Box_\wedge & \forall \phi, \psi \in \sent(\lang_{\Uni, \Mod}) \dt \big( \Tr^\Box(\phi \udot\wedge \psi) \leftrightarrow (\Tr^\Box(\phi) \wedge \Tr^\Box(\psi)) \big) \\
    \Diamond_\CM & \forall \phi \in \sent(\lang_{\Uni, \Mod}) \dt \big( \Tr^\Diamond(\phi) \leftrightarrow \exists \mathcal{U} \in \Uni\dt \Mod(\mathcal{U}, \phi) \big) \\
\end{array}
\]
\end{prop}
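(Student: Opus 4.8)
The plan is to reduce each clause, after fixing a universe and an assignment, to the corresponding propositional fact about $\Mod(\mathcal{U}, \cdot, f)$ supplied by the compositional axioms, and then to reinstate the quantifiers hidden in the $\Tr^\Box$ abbreviation. Two tools are used throughout. First, the remark following System \ref{Sys: CM} gives the derived axiom $\CM_\rightarrow$, i.e.\ $\forall \mathcal{U} \in \Uni \dt \forall \phi, \psi \dt \forall f \in \VA^\mathcal{U} \dt (\Mod(\mathcal{U}, \phi \udot\rightarrow \psi, f) \leftrightarrow (\Mod(\mathcal{U}, \phi, f) \rightarrow \Mod(\mathcal{U}, \psi, f)))$, alongside $\CM_\neg$ and $\CM_\wedge$. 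Second, the fact recorded just before the Proposition that $\Mod(\mathcal{U}, \phi, f) \iff \mathcal{U} \models (\phi, f)$, so that for a sentence $\phi$ the coincidence lemma of Tarskian semantics (provable in $\ZF$ by induction on formula complexity) makes $\Mod(\mathcal{U}, \phi, f)$ independent of $f$.

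For $\CM^\Box_\rightarrow$ I would fix sentences $\phi, \psi$, assume $\Tr^\Box(\phi \udot\rightarrow \psi)$ and $\Tr^\Box(\phi)$, and then fix an arbitrary $\mathcal{U} \in \Uni$ and $f \in \VA^\mathcal{U}$; unfolding the abbreviations gives $\Mod(\mathcal{U}, \phi \udot\rightarrow \psi, f)$ and $\Mod(\mathcal{U}, \phi, f)$, so $\CM_\rightarrow$ yields $\Mod(\mathcal{U}, \psi, f)$, and since $\mathcal{U}, f$ were arbitrary this is exactly $\Tr^\Box(\psi)$. For $\CM^\Box_\wedge$ the argument is cleaner still: applying $\CM_\wedge$ pointwise turns $\Tr^\Box(\phi \udot\wedge \psi)$ into $\forall \mathcal{U} \in \Uni \dt \forall f \in \VA^\mathcal{U} \dt (\Mod(\mathcal{U}, \phi, f) \wedge \Mod(\mathcal{U}, \psi, f))$, and since a universal quantifier distributes over conjunction this is $\Tr^\Box(\phi) \wedge \Tr^\Box(\psi)$, giving both directions at once. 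Then $\CM^\Box_\leftrightarrow$ needs no further appeal to the $\Mod$-axioms: reading $\phi \udot\leftrightarrow \psi$ as $(\phi \udot\rightarrow \psi) \udot\wedge (\psi \udot\rightarrow \phi)$, $\CM^\Box_\wedge$ splits $\Tr^\Box(\phi \udot\leftrightarrow \psi)$ into $\Tr^\Box(\phi \udot\rightarrow \psi) \wedge \Tr^\Box(\psi \udot\rightarrow \phi)$, whereupon two applications of $\CM^\Box_\rightarrow$ give $\Tr^\Box(\phi) \leftrightarrow \Tr^\Box(\psi)$.

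Finally, for $\Diamond_\CM$ I would unfold definitions: $\Tr^\Diamond(\phi)$ is $\neg \Tr^\Box(\udot\neg \phi)$, and applying $\CM_\neg$ under the quantifiers rewrites $\Tr^\Box(\udot\neg \phi)$ as $\forall \mathcal{U} \in \Uni \dt \forall f \in \VA^\mathcal{U} \dt \neg\Mod(\mathcal{U}, \phi, f)$; negating yields $\exists \mathcal{U} \in \Uni \dt \exists f \in \VA^\mathcal{U} \dt \Mod(\mathcal{U}, \phi, f)$, which must be matched against the right-hand side $\exists \mathcal{U} \in \Uni \dt \forall f \in \VA^\mathcal{U} \dt \Mod(\mathcal{U}, \phi, f)$. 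This last matching is the only step with real content and the one I expect to need the most care: passing from the inner $\exists f$ to the inner $\forall f$ uses that for the sentence $\phi$ satisfaction is independent of the assignment (the coincidence lemma, via $\Mod(\mathcal{U}, \phi, f) \iff \mathcal{U} \models (\phi, f)$), while the reverse direction additionally uses that each universe is a nonempty structure, so that $\VA^\mathcal{U} \neq \emptyset$. Everywhere else the proof is routine quantifier manipulation over the compositional axioms.
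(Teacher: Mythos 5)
Your proof is correct, and since the paper offers no proof of this Proposition at all (it is introduced merely as a list of conditions ``easily provable'' in $\CM^-$), there is no competing argument to compare against; what you give is clearly the intended one: unfold the abbreviations $\Tr^\Box$, $\Tr^\Diamond$, $\Mod(\mathcal{U},\phi)$, apply the compositional axioms (including the derived $\CM_\rightarrow$) pointwise at each pair $(\mathcal{U},f)$, and reassemble the quantifiers. The three $\Box$-clauses are exactly as routine as you present them, and deriving $\CM^\Box_\leftrightarrow$ from $\CM^\Box_\wedge$ plus two applications of $\CM^\Box_\rightarrow$ is fine.

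Your handling of $\Diamond_\CM$ is where the proposal adds real value, because both side conditions you isolate are genuinely needed and the paper passes over them silently. For the left-to-right direction, assignment-independence of $\Mod(\mathcal{U},\phi,\cdot)$ for sentences $\phi$ is not an axiom of $\CM^-$: it must be proved by induction on the internal structure of $\phi$, either directly from the compositional axioms or via the equivalence $\Mod(\mathcal{U},\phi,f) \iff \mathcal{U} \models (\phi,f)$ recorded in the remark preceding the Proposition; note that the formula being inducted on contains $\Mod$, so the induction uses $\Sep(\lang_{\Uni,\Mod})$ (in the spirit of Proposition \ref{Prop: CT induction}), which $\CM^-$ does provide. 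For the right-to-left direction, the hypothesis $\VA^\mathcal{U} \neq \emptyset$ that you invoke is \emph{not} derivable in $\CM^-$: every compositional axiom is vacuous at a universe $\mathcal{U}$ with $\VA^\mathcal{U} = \emptyset$, so interpreting $\Uni$ as $\{\emptyset\}$ and $\Mod$ as $\emptyset$ over any model of $\ZF$ yields a model of $\CM^-$ in which $\exists \mathcal{U} \in \Uni \dt \Mod(\mathcal{U},\phi)$ holds vacuously while $\Tr^\Diamond(\phi)$ fails, for every sentence $\phi$. So that direction of $\Diamond_\CM$ holds only under the intended but tacit convention that members of $\Uni$ are structures in the usual model-theoretic sense and hence have nonempty domain. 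Making that assumption explicit, as you do, is the right reading; strictly speaking it is the Proposition as stated in the paper, not your proof, that carries this edge-case caveat.
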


\begin{prop}\label{Prop: CM + Non-triv box comp}
$\CM^- + \textnormal{\sf Non-Triviality}$ proves:
\[
\begin{array}{ll}
    \CM^\Box_\bot & \Tr^\Box(\gquote{\bot}) \leftrightarrow \bot \\
    \CM^\Box_\neg & \forall \phi \in \sent(\lang_{\Uni, \Mod}) \dt \big( \Tr^\Box(\udot\neg \phi) \rightarrow \neg \Tr^\Box(\phi) \big) \\
    \mathsf{D}_\CM & \forall \phi \in \sent(\lang_{\Uni, \Mod}) \dt \big( \Tr^\Box(\phi) \rightarrow \Tr^\Diamond(\phi) \big) \\
\end{array}
\]
\end{prop}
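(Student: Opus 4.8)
The plan is to unfold the abbreviations throughout — recall that $\Tr^\Box(\phi)$ abbreviates $\forall \mathcal{U} \in \Uni \dt \forall f \in \VA^\mathcal{U} \dt \Mod(\mathcal{U}, \phi, f)$ and $\Tr^\Diamond(\phi)$ abbreviates $\neg \Tr^\Box(\udot\neg\phi)$ — and to treat the three claims in turn, in each case extracting a single witnessing universe from $\textnormal{\sf Non-Triviality}$ and applying the compositional axiom $\CM_\neg$.

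First I would dispatch $\CM^\Box_\bot$. The direction $\bot \rightarrow \Tr^\Box(\gquote{\bot})$ is \emph{ex falso quodlibet}. For $\Tr^\Box(\gquote{\bot}) \rightarrow \bot$ I would invoke the equivalence recorded in the remark just before Proposition \ref{Prop: CM box comp}, that over $\CM^-$ the implication $\Tr^\Box(\gquote{\bot}) \rightarrow \bot$ is equivalent to $\textnormal{\sf Non-Triviality}$; since the latter is assumed, this direction is immediate. A self-contained argument is equally short: $\textnormal{\sf Non-Triviality}$ yields a universe $\mathcal{U}_0$ (nonempty as a structure, so with some $f_0 \in \VA^{\mathcal{U}_0}$), and evaluating $\gquote{\bot}$ in $\mathcal{U}_0$ through the compositional axioms (e.g.\ $\CM_=$ and $\CM_\neg$) refutes $\Mod(\mathcal{U}_0, \gquote{\bot}, f_0)$.

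Next, for $\CM^\Box_\neg$, I would fix a sentence $\phi$ and assume $\Tr^\Box(\udot\neg\phi)$. Using $\textnormal{\sf Non-Triviality}$ I would fix a universe $\mathcal{U}_0 \in \Uni$ and some $f_0 \in \VA^{\mathcal{U}_0}$; here one uses that a universe is a nonempty structure, so that assignments into it exist. This is the only genuinely load-bearing step, and precisely where $\textnormal{\sf Non-Triviality}$ is needed. The hypothesis gives $\Mod(\mathcal{U}_0, \udot\neg\phi, f_0)$, and $\CM_\neg$ then yields $\neg \Mod(\mathcal{U}_0, \phi, f_0)$; this single counterexample refutes $\forall \mathcal{U} \in \Uni \dt \forall f \in \VA^\mathcal{U} \dt \Mod(\mathcal{U}, \phi, f)$, i.e.\ establishes $\neg \Tr^\Box(\phi)$, as required.

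Finally, $\mathsf{D}_\CM$ falls out as the classical contrapositive of $\CM^\Box_\neg$: unfolding $\Tr^\Diamond(\phi)$ as $\neg \Tr^\Box(\udot\neg\phi)$, the target $\Tr^\Box(\phi) \rightarrow \Tr^\Diamond(\phi)$ reads $\Tr^\Box(\phi) \rightarrow \neg \Tr^\Box(\udot\neg\phi)$, which is equivalent by double-negation elimination to the instance $\Tr^\Box(\udot\neg\phi) \rightarrow \neg \Tr^\Box(\phi)$ just proved. I expect no real obstacle beyond this bookkeeping; the only subtlety is the nonemptiness of the witnessing universe, which is implicit in $\textnormal{\sf Non-Triviality}$ and is exactly why these principles are not available over $\CM^-$ alone. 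As an alternative to the direct argument, $\CM^\Box_\neg$ can be routed through $\CM^\Box_\wedge$ of Proposition \ref{Prop: CM box comp} together with the already-proved $\CM^\Box_\bot$, using that $\Tr^\Box(\phi) \wedge \Tr^\Box(\udot\neg\phi)$ forces $\Tr^\Box(\phi \wedged \udot\neg\phi)$ and that $\phi \wedged \udot\neg\phi$ is refuted in every universe, but the direct route is shorter.
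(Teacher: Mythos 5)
Your proposal is correct: the paper gives no proof of this proposition at all (it is presented as one of the compositional conditions "easily provable" for $\Tr^\Box$), and your argument — unfolding the abbreviations, extracting a witnessing universe and assignment from $\textnormal{\sf Non-Triviality}$, applying $\CM_\neg$, and obtaining $\mathsf{D}_\CM$ as the classical contrapositive of $\CM^\Box_\neg$ — is exactly the routine argument the paper intends, matching its own remark that over $\CM^-$ the axiom $\textnormal{\sf Non-Triviality}$ is equivalent to $\Tr^\Box(\gquote{\bot}) \rightarrow \bot$. Your explicit flagging of the load-bearing step, namely that the witnessing universe $\mathcal{U}_0$ must be nonempty so that $\VA^{\mathcal{U}_0} \neq \emptyset$, is a genuine subtlety the paper glosses over (it rests on the convention that universes are structures in the usual model-theoretic sense), and making it explicit strengthens rather than weakens the write-up.
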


\begin{lemma}[Soundness Lemma]\label{Lemma: CM soundness}
$\CM^-$ proves that for all $\mathcal{U} \in \Uni$, $\{ \phi \in \lang_{\Uni, \Mod} \mid \Mod(\mathcal{U}, \phi) \}$ is deductively closed.
\end{lemma}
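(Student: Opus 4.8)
The plan is to reduce the claim to the ordinary soundness theorem of first-order logic, formalised inside $\ZF$. The crucial observation is the one recorded just before the lemma: in $\CM^-$ each $\mathcal{U} \in \Uni$ may be regarded as a genuine set-sized $\lang_{\Uni, \Mod}$-structure, whose domain is the set $\mathcal{U}$ and whose interpreting relations $\in^\mathcal{U}$, $\Uni^\mathcal{U}$, $\Mod^\mathcal{U}$ exist as sets by $\Sep(\lang_{\Uni, \Mod})$, each being carved out of a finite Cartesian power of $\mathcal{U}$. First I would make precise that, with respect to this structure, $\Mod(\mathcal{U}, \phi, f) \iff \mathcal{U} \models (\phi, f)$ for every $\phi \in \lang_{\Uni, \Mod}$ and $f \in \VA^\mathcal{U}$. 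Granting this, and recalling that $\Mod(\mathcal{U}, \phi)$ abbreviates $\forall f \in \VA^\mathcal{U} \dt \Mod(\mathcal{U}, \phi, f)$, the set $\{\phi \in \lang_{\Uni, \Mod} \mid \Mod(\mathcal{U}, \phi)\}$ is exactly the set of $\lang_{\Uni, \Mod}$-formulas \emph{valid in} the set-structure $\mathcal{U}$, i.e. true under every assignment.

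Given this identification, deductive closure is immediate. Working in $\CM^-$ (which extends $\ZF$), I would invoke the internal strong soundness theorem: for any set-structure $\mathcal{M}$ and formula $\psi$, if a finite set of formulas each valid in $\mathcal{M}$ derives $\psi$, then $\psi$ is valid in $\mathcal{M}$. This is a theorem of $\ZF$ about the $\ZF$-definable satisfaction relation of set-structures, proved by induction on the length of derivations: logical axioms are valid in every structure, and validity in a fixed structure is preserved by modus ponens and by generalization (here the passage to validity, rather than truth under a single assignment, is what makes generalization unproblematic). Crucially, no occurrence of $\Uni$ or $\Mod$ is involved at this stage, so plain $\ZF$ suffices. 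Applying the theorem with $\mathcal{M} = \mathcal{U}$: if the set above derives $\psi$, then every premise of the witnessing derivation is valid in $\mathcal{U}$, whence $\psi$ is valid in $\mathcal{U}$, i.e. $\Mod(\mathcal{U}, \psi)$. This is exactly deductive closure.

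The substantive step is therefore the equivalence $\Mod(\mathcal{U}, \phi, f) \iff \mathcal{U} \models (\phi, f)$, which I would prove by induction on the syntactic complexity of $\phi$ (equivalently, on its G\"odel code). The required $\lang_{\Uni, \Mod}$-induction is available, since $\Sep(\lang_{\Uni, \Mod})$ yields transfinite, and hence arithmetic, induction for $\lang_{\Uni, \Mod}$-formulas. The atomic cases for $\gquote{x \in y}$, $\gquote{\Uni(x)}$ and $\gquote{\Mod(x,y,z)}$ hold by the very definitions of $\in^\mathcal{U}$, $\Uni^\mathcal{U}$ and $\Mod^\mathcal{U}$, while the atomic case $\gquote{x = y}$ is exactly $\CM_=$, equality in the structure being real equality; the inductive steps for $\neg$, $\wedge$ and $\forall$ match the Tarskian clauses against $\CM_\neg$, $\CM_\wedge$ and $\CM_\forall$, with $\vee, \rightarrow, \exists$ handled by the derived compositional axioms.

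The only place where the untyped character of $\Mod$ could cause worry is precisely this induction, and I expect the associated bookkeeping to be the main (though routine) obstacle. One must check that there is no vicious circularity: the recursion defining $\mathcal{U} \models (\phi, f)$ descends on the syntactic build-up of $\phi$, and the compositional axioms reduce $\Mod(\mathcal{U}, \phi, f)$ to instances of $\Mod$ at strictly simpler formulas, bottoming out at the atomic clauses, which are pinned down outright. Thus the fact that G\"odel-coded $\Mod$-atoms themselves belong to $\lang_{\Uni, \Mod}$ is harmless: such atoms are treated as base cases fixed by $\Mod^\mathcal{U}$ rather than being unfolded further, so the induction is well-founded and the equivalence goes through.
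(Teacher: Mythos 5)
Your proposal is correct, and it rests on the same underlying insight as the paper's one-line proof, namely that the compositional axioms $\CM_=$, $\CM_\neg$, $\CM_\wedge$, $\CM_\forall$ force $\Mod(\mathcal{U},\cdot,\cdot)$ to behave exactly like Tarskian satisfaction; but your decomposition is genuinely different. The paper runs the classical soundness induction on derivations directly, with $\Mod(\mathcal{U},\cdot,\cdot)$ playing the role of $\models$: logical axioms are $\Mod$-true by the compositional axioms, and the rules of inference preserve $\Mod$-truth. You instead factor the argument into (i) the transfer lemma $\Mod(\mathcal{U},\phi,f) \leftrightarrow \mathcal{U} \models (\phi,f)$ for the set structure $\langle \mathcal{U}, \in^\mathcal{U}, \Uni^\mathcal{U}, \Mod^\mathcal{U} \rangle$ --- which is precisely the equivalence the paper records, without proof, in the remark immediately preceding the lemma --- and (ii) the $\ZF$-internal soundness theorem for set structures, invoked as a black box. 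What your factorization buys is modularity: the untyped predicate, and hence the need for induction licensed by $\Sep(\lang_{\Uni, \Mod})$, is confined to the formula-induction in (i), while the subsequent induction on derivations concerns only the $\lang$-definable satisfaction relation of a set structure and so goes through in plain $\ZF$; in the paper's direct version, the derivation-induction itself is over an $\lang_{\Uni, \Mod}$-property and therefore leans on the extended Separation schema. What the paper's version buys is that it never needs to package $\mathcal{U}$ as a set structure at all. Your handling of the two genuine subtleties is exactly right: the induction in (i) is justified by $\Sep(\lang_{\Uni, \Mod})$ (as in the paper's Proposition \ref{Prop: CT induction}), and the coded atoms $\gquote{\Uni(x)}$ and $\gquote{\Mod(x,y,z)}$ are base cases pinned down by $\Uni^\mathcal{U}$ and $\Mod^\mathcal{U}$ rather than clauses to be unfolded, so the untypedness of $\Mod$ introduces no circularity; likewise your observation that working with validity (truth under all assignments) is what makes the generalization rule unproblematic is the correct way to discharge that step.
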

\begin{proof}
Using the compositional axioms $\CM_\neg$, $\CM_\wedge$ and $\CM_\forall$, this is proved just like the soundness theorem for the usual semantics of first-order logic.
\end{proof}

The theory $\MS$ (standing for Multiverse theory of Satisfaction) is analogous to the Friedman--Sheard theory of truth $\FS$:

\begin{sys}[$\MS$]\label{Sys: MS}
Consider these rules of proof:
\[
\begin{array}{ll}
\NEC & \text{For each $\phi \in \sent(\lang_{\Uni, \Mod})$: If $\MS \vdash \phi$, then $\MS \vdash \Tr^\Box(\gquote{\phi})$.} \\
\CONEC & \text{For each $\phi \in \sent(\lang_{\Uni, \Mod})$: If $\MS \vdash \Tr^\Box(\gquote{\phi})$, then $\MS \vdash \phi$.} \\
\end{array}
\]
The system $\MS^-$ is $\CM^- + \NEC + \CONEC$ and the system $\MS$ is $\CM + \NEC + \CONEC$.

If $\lang'$ expands $\lang$, then we write $\MS^-(\lang')$ and $\MS(\lang')$ for the corresponding systems obtained by replacing all occurrences of $\lang_{\Uni, \Mod}$, in the axioms of the form $\CM_-$ and in the rules $\NEC, \CONEC$, by the language $\lang'_{\Uni, \Mod}$.
\end{sys}

Recall that if $S$ is a system involving deductive rules, and $A$ is an axiom, then $S + A$ denotes the natural extension of $S$ in which these deductive rules may be applied to proofs also involving $A$. For example, in $\MS + \exists x \dt \Uni(x)$ we may use $\NEC$ to derive $\forall \mathcal{U} \in \Uni \dt \Mod(\mathcal{U}, \gquote{\exists x \dt \Uni(x)})$.

Figure \ref{Fig: Semantically motivated multiverse axioms} displays reflective axioms, modal axioms and the system $\GL_{\CM^-}$ interpreting G\"odel-L\"ob logic. 

The reflective axioms may be viewed as statements, of increasing strength, that the universe of the background theory is reflected in the multiverse: {\sf Non-Triviality} just says that there is a universe in the multiverse; $\textnormal{\sf Multiverse Reflection}$ is equivalent to that any $\mathcal{L}$-sentence holding in the background universe also holds in some universe; and $\textnormal{\sf Self-Perception}$ goes as far as saying that the background universe is isomorphic to a universe in the multiverse.

\begin{figure}
\caption{Semantically motivated multiverse axioms}
\label{Fig: Semantically motivated multiverse axioms}

\vspace{6pt}
\begin{center} 
{\bf Scope and Completeness axioms} \\
\vspace{6pt}
Given a set theory $S$ in language $\lang^+$:
\end{center}
\[
\begin{array}{ll}
\mathsf{Multiverse}_{\udot{S}} & \forall \phi \in \udot\lang^+_{\Uni, \Mod} \dt \big(\phi \in \udot S \rightarrow \forall \mathcal{U} \in \Uni \dt \Mod(\mathcal{U}, \phi) \big) \\
\mathsf{Completeness}_{\udot{S}} & \forall \phi \in \udot\lang^+_{\Uni, \Mod} \dt \big(\forall \mathcal{U} \in \Uni \dt \Mod(\mathcal{U}, \phi) \rightarrow \Pr_{\udot S}(\phi)  \big)
\end{array}
\]

\begin{center} 
{\bf Reflective axioms}
\end{center}
\[
\begin{array}{ll}
\textnormal{\sf Non-Triviality} & \exists \mathcal{U} \Uni(\mathcal{U}) \\
\textnormal{\sf Multiverse Reflection} & \forall \sigma \in \sent(\lang): \dt \Tr^\Box(\gquote{\sigma}) \rightarrow \sigma  \\
\textnormal{\sf Self-Perception} & \mathrm{Iso}(\self) + \Uni(\self)
\end{array}
\]

\begin{center} 
{\bf Modal axioms}
\end{center}
\[
\begin{array}{lll}
\textnormal{\sf K}_\CM & \forall \sigma, \tau \in \sent(\lang_{\Uni, \Mod}): & \Tr^\Box(\gquote{\sigma \rightarrow \tau}) \rightarrow \big( \Tr^\Box(\gquote{\sigma}) \rightarrow \Tr^\Box(\gquote{\tau}) \big)  \\
\textnormal{\sf D}_\CM & \forall \sigma \in \sent(\lang_{\Uni, \Mod}): & \Tr^\Box(\gquote{\sigma}) \rightarrow \Tr^\Diamond(\gquote{\sigma}) \\
\textnormal{\sf T}_\CM & \forall \sigma \in \sent(\lang_{\Uni, \Mod}): & \Tr^\Box(\gquote{\sigma}) \rightarrow \sigma  \\
\textnormal{\sf 4}_\CM & \forall \sigma \in \sent(\lang_{\Uni, \Mod}): & \Tr^\Box(\gquote{\sigma}) \rightarrow \Tr^\Box(\gquote{\Tr^\Box(\gquote{\sigma})}) \\
\textnormal{\sf L\"ob}_\CM & \forall \sigma \in \sent(\lang_{\Uni, \Mod}): & \big( \Tr^\Box(\gquote{\Tr^\Box(\gquote{\sigma}) \rightarrow \sigma }) \rightarrow \Tr^\Box(\gquote{\sigma}) \big) \\
\end{array}
\]

\begin{center} 
{\bf G\"odel-L\"ob multiverse}
\end{center}
\[
\begin{array}{lll}
\mathsf{Comp}_\CM & \CM^- + \mathsf{Multiverse}_{\udot{\mathsf{Comp}}_\CM} + \mathsf{Completeness}_{\udot{\mathsf{Comp}}_\CM} \\
\end{array}
\]
\end{figure}

The next theorem applies standard arguments from axiomatic theories of truth to exhibit semantically motivated axioms that turn out to be paradoxical.

\begin{thm}\label{Thm: liar}
\begin{enumerate}[{\rm(a)}]
\item The following axiom schema is inconsistent over $\CM^- + \NEC$:
\[
\begin{array}{ll}
\mathsf{T}_\CM & \forall \sigma \in \sent(\lang_{\Uni, \Mod}), \dt \Tr^\Box(\gquote{\sigma}) \rightarrow \sigma
\end{array}
\]
\item The following axiom schema is inconsistent over $\CM^- + \NEC + \textnormal{\sf Non-Triviality}$:
\[
\begin{array}{ll}
\mathsf{4}_\CM & \forall \sigma \in \sent(\lang_{\Uni, \Mod}), \dt \Tr^\Box(\gquote{\sigma}) \rightarrow \Tr^\Box(\gquote{\Tr^\Box(\gquote{\sigma})}) \\
\end{array}
\]
\end{enumerate}
\end{thm}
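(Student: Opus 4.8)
The plan is to run a Montague-style ``knower'' paradox. Since $\Tr^\Box(\gquote{x})$ is an $\lang_{\Uni, \Mod}$-formula in the single free variable $x$ and the base theory of $\CM^-$ contains enough of $\ZF$ for the diagonal lemma, I would first fix a liar sentence $\lambda \in \sent(\lang_{\Uni, \Mod})$ with $\CM^- \vdash \lambda \leftrightarrow \neg\Tr^\Box(\gquote{\lambda})$. The instance of $\mathsf{T}_\CM$ at $\sigma \equiv \lambda$ is $\Tr^\Box(\gquote{\lambda}) \rightarrow \lambda$; chaining it with the diagonal equivalence $\lambda \rightarrow \neg\Tr^\Box(\gquote{\lambda})$ gives $\Tr^\Box(\gquote{\lambda}) \rightarrow \neg\Tr^\Box(\gquote{\lambda})$, hence $\neg\Tr^\Box(\gquote{\lambda})$, and then, again by the equivalence, $\lambda$. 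Thus $\lambda$ is a theorem of $\CM^- + \mathsf{T}_\CM$. Because this derivation takes place in $\CM^- + \NEC + \mathsf{T}_\CM$ and, by the convention for $S+A$ in \S\ref{subsection: Misc prel}, $\NEC$ may be applied to any theorem of the system, applying $\NEC$ to $\lambda$ yields $\Tr^\Box(\gquote{\lambda})$, directly contradicting the already-derived $\neg\Tr^\Box(\gquote{\lambda})$.

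\textbf{Part (b).} Here the plan is a formalized L\"ob's theorem (equivalently, G\"odel's second incompleteness theorem) for the operator $\sigma \mapsto \Tr^\Box(\gquote{\sigma})$. I would first record that over $\CM^- + \NEC + \mathsf{4}_\CM$ this operator satisfies the three Hilbert--Bernays--L\"ob derivability conditions: condition $\mathrm{D1}$ is exactly the rule $\NEC$; condition $\mathrm{D2}$ (distribution, the $\mathsf{K}$ principle) is $\CM^\Box_\rightarrow$ from Proposition \ref{Prop: CM box comp}; and condition $\mathrm{D3}$, namely $\Tr^\Box(\gquote{\sigma}) \rightarrow \Tr^\Box(\gquote{\Tr^\Box(\gquote{\sigma})})$, is precisely the assumed schema $\mathsf{4}_\CM$.

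With these in hand I would reprove L\"ob in the standard way: for a target sentence $\psi$, use the diagonal lemma to obtain $\theta$ with $\vdash \theta \leftrightarrow (\Tr^\Box(\gquote{\theta}) \rightarrow \psi)$, then apply $\NEC$, $\CM^\Box_\rightarrow$ (twice), and $\mathsf{4}_\CM$ (once) to derive $\Tr^\Box(\gquote{\theta}) \rightarrow \Tr^\Box(\gquote{\psi})$; if moreover $\vdash \Tr^\Box(\gquote{\psi}) \rightarrow \psi$, this gives $\vdash \Tr^\Box(\gquote{\theta}) \rightarrow \psi$, hence $\vdash \theta$ by the diagonal equivalence, hence $\vdash \Tr^\Box(\gquote{\theta})$ by $\NEC$, and therefore $\vdash \psi$. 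To instantiate this at $\psi \equiv \bot$ I would invoke \textnormal{\sf Non-Triviality}: by $\CM^\Box_\bot$ of Proposition \ref{Prop: CM + Non-triv box comp}, $\CM^- + \textnormal{\sf Non-Triviality} \vdash \neg\Tr^\Box(\gquote{\bot})$, which is exactly the L\"ob hypothesis $\Tr^\Box(\gquote{\bot}) \rightarrow \bot$. L\"ob's theorem then delivers $\vdash \bot$, so $\CM^- + \NEC + \textnormal{\sf Non-Triviality} + \mathsf{4}_\CM$ is inconsistent.

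The routine-but-delicate part in both arguments is the G\"odel-coding bookkeeping: one must check that the substitution and coding identities built into $\lang$ make the instances of $\CM^\Box_\rightarrow$ apply to the intended sentences (e.g.\ that $\gquote{\Tr^\Box(\gquote{\theta}) \rightarrow \psi}$ is provably $\gquote{\Tr^\Box(\gquote{\theta})} \rightarrowd \gquote{\psi}$, and that $\gquote{\Tr^\Box(\gquote{\theta})}$ is the code fed to $\mathsf{4}_\CM$), and that each use of $\NEC$ is on a genuine theorem of the extended system. No new ideas beyond the classical Montague and L\"ob arguments are required; the only framework-specific inputs are the compositional modal principles of Propositions \ref{Prop: CM box comp} and \ref{Prop: CM + Non-triv box comp} together with the availability of the diagonal lemma in the base theory.
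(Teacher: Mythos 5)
Your proposal is correct, and it splits into two cases when compared with the paper. Part (a) is essentially the paper's own proof: the same liar sentence $\lambda$ with $\CM^- \vdash \lambda \leftrightarrow \neg\Tr^\Box(\gquote{\lambda})$, the same derivation of $\neg\Tr^\Box(\gquote{\lambda})$ and then $\lambda$, and the same final application of $\NEC$ to reach the contradiction. Part (b), however, takes a genuinely different route. The paper stays with the \emph{same} liar sentence: from $\mathsf{4}_\CM$ it gets $\Tr^\Box(\gquote{\lambda}) \rightarrow \Tr^\Box(\gquote{\Tr^\Box(\gquote{\lambda})})$, rewrites the inner code via $\CM^\Box_\leftrightarrow$ (applied to the $\NEC$-image of the diagonal equivalence) to obtain $\Tr^\Box(\gquote{\lambda}) \rightarrow \Tr^\Box(\gquote{\neg\lambda})$, then uses $\CM^\Box_\neg$ (this is where {\sf Non-Triviality} enters) to conclude $\Tr^\Box(\gquote{\lambda}) \rightarrow \neg\Tr^\Box(\gquote{\lambda})$, and finishes exactly as in (a). You instead verify the Hilbert--Bernays--L\"ob conditions for the operator $\sigma \mapsto \Tr^\Box(\gquote{\sigma})$ ($\NEC$ as D1, $\CM^\Box_\rightarrow$ as D2, $\mathsf{4}_\CM$ as D3), reprove L\"ob's rule with a second diagonal sentence $\theta$, and feed it the hypothesis $\Tr^\Box(\gquote{\bot}) \rightarrow \bot$, which is where {\sf Non-Triviality} enters via $\CM^\Box_\bot$. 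All your ingredients are available in the paper: $\CM^\Box_\rightarrow$ is Proposition \ref{Prop: CM box comp}, $\CM^\Box_\bot$ is Proposition \ref{Prop: CM + Non-triv box comp}, and the paper's convention for systems with deductive rules indeed licenses applying $\NEC$ to theorems whose derivations use $\mathsf{4}_\CM$ and {\sf Non-Triviality}. The trade-off: the paper's argument is shorter and more elementary --- one fixed point serves both parts and only two compositional facts beyond (a) are needed --- whereas your argument is more modular and explanatory, identifying the result as an instance of formalized second incompleteness (the inconsistency of the L\"ob rule with $\neg\Box\bot$), which makes transparent why $\mathsf{4}_\CM$ is precisely the schema that completes the derivability conditions, and which would transfer verbatim to any operator with the same closure properties.
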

\begin{rem*} Note that $\mathsf{T}_\CM$ is the untyped version of {\sf Multiverse Reflection}.
\end{rem*}
\begin{proof}
By G\"odel diagonalization, there is an $\lang_{\Uni, \Mod}$-sentence $\lambda$, such that 
\[\CM \vdash \lambda \leftrightarrow \neg \Tr^\Box(\gquote{\lambda}).\]

By $\mathsf{T}_\CM$, $\Tr^\Box(\gquote{\lambda}) \rightarrow \lambda$, so we get $\neg \Tr^\Box(\gquote{\lambda})$, and therefore $\lambda$. Now $\Tr^\Box(\gquote{\lambda})$ follows by $\NEC$, a contradiction.

By $\mathsf{4}_\CM$, $\Tr^\Box(\gquote{\lambda}) \rightarrow \Tr^\Box(\gquote{\Tr^\Box(\gquote{\lambda})})$, so by $\CM^\Box_\leftrightarrow$, $\Tr^\Box(\gquote{\lambda}) \rightarrow \Tr^\Box(\gquote{\neg\lambda})$, and by $\CM^\Box_\neg$ (using {\sf Non-Triviality}), $\Tr^\Box(\gquote{\lambda}) \rightarrow \neg \Tr^\Box(\gquote{\lambda})$. So we get $\neg \Tr^\Box(\gquote{\lambda})$ and therefore $\lambda$. Now $\Tr^\Box(\gquote{\lambda})$ follows by $\NEC$, a contradiction.
\end{proof}

The following Proposition relates the natural systems obtained by adding reflective axioms to ${\CM^-} + \NEC$.

\begin{prop}\label{Prop: Basic multiverse axiom relationships}
Over ${\CM^-} + \NEC$:
\begin{enumerate}[{\rm(a)}]
\item $\textnormal{\sf Multiverse Reflection} \vdash \textnormal{\sf Non-Triviality}$
\item $\textnormal{\sf Self-Perception} \vdash \textnormal{\sf Multiverse Reflection}$
\end{enumerate}
\end{prop}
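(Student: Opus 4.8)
The plan is to handle the two items separately. In both cases the deductive rule $\NEC$ will turn out to be superfluous, so I would in fact argue over the weaker theory $\CM^-$ together with the relevant reflective axiom; this suffices a fortiori over $\CM^- + \NEC$.

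For (a) I would simply feed a refutable $\lang$-sentence into {\sf Multiverse Reflection}. Recall that {\sf Multiverse Reflection} is the schema asserting $\Tr^\Box(\gquote{\sigma}) \rightarrow \sigma$ for every $\sigma \in \sent(\lang)$. Taking $\sigma$ to be a canonical disprovable $\lang$-sentence $\bot$ (say $\exists x \dt x \neq x$) yields the single instance $\Tr^\Box(\gquote{\bot}) \rightarrow \bot$. As recorded in the discussion following System \ref{Sys: CM}, over $\CM^-$ this formula is equivalent to {\sf Non-Triviality}: from $\Tr^\Box(\gquote{\bot}) \rightarrow \bot$ together with the fact that $\CM^-$ refutes $\bot$ we obtain $\neg \Tr^\Box(\gquote{\bot})$, i.e.\ $\exists \mathcal{U} \in \Uni \dt \neg\Mod(\mathcal{U}, \gquote{\bot})$, which in particular gives $\exists \mathcal{U} \dt \Uni(\mathcal{U})$. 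Hence (a) is immediate.

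For (b) I would fix an arbitrary $\sigma \in \sent(\lang)$ and establish the corresponding instance $\Tr^\Box(\gquote{\sigma}) \rightarrow \sigma$ of {\sf Multiverse Reflection}. Assume $\Tr^\Box(\gquote{\sigma})$, that is, $\forall \mathcal{U} \in \Uni \dt \Mod(\mathcal{U}, \gquote{\sigma})$. By {\sf Self-Perception} we have $\Uni(\self)$, so instantiating at $\mathcal{U} = \self$ gives $\Mod(\self, \gquote{\sigma})$. The next step is to pass from the primitive relation $\Mod(\self, -)$ to model-theoretic satisfaction in the structure $\self$: by the remark following System \ref{Sys: CM}, the compositional axioms of $\CM^-$ give $\Mod(\self, \phi, f) \leftrightarrow \self \models (\phi, f)$, and in particular $\Mod(\self, \gquote{\sigma}) \leftrightarrow \self \models \gquote{\sigma}$. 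Finally, {\sf Self-Perception} also asserts $\Iso(\self)$, so Proposition \ref{Prop: iso elementarily equiv}, applied with the empty tuple of free variables, gives $\self \models \gquote{\sigma} \leftrightarrow \sigma$. Chaining these equivalences yields $\sigma$, completing the instance.

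The only genuinely delicate point is the bridge in (b) between the primitive truth-in-a-universe relation $\Mod(\self,-)$ and the Tarskian satisfaction relation $\self \models (-)$ occurring in Proposition \ref{Prop: iso elementarily equiv}; I expect this to be the main obstacle to keep honest. One must check that the $\lang_{\Uni, \Mod}$-structure induced on $\self$ from $\Mod$ (via $\in^\self$, etc.) is the very structure on which $\Iso(\self)$ and Proposition \ref{Prop: iso elementarily equiv} operate, so that the two readings of $\self \models \gquote{\sigma}$ coincide. Since $\sigma \in \lang$ and its satisfaction depends only on the $\in$-reduct, this reduces to the standard induction on formula complexity witnessing $\Mod(\self,\phi,f) \leftrightarrow \self \models (\phi,f)$ for $\lang$-formulas, which appeals to exactly $\CM_=, \CM_\neg, \CM_\wedge, \CM_\forall$. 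Once this identification is in place the remainder is bookkeeping, and the fact that neither argument invokes $\NEC$ confirms that the proposition already holds over $\CM^-$ together with the respective reflective axiom.
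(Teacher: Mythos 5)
Your proof is correct and follows the paper's own argument in both parts: for (a) the paper likewise derives {\sf Non-Triviality} from the single instance $\Tr^\Box(\gquote{\bot}) \rightarrow \bot$ (exactly the equivalence recorded after System \ref{Sys: CM}), and for (b) it likewise instantiates $\Tr^\Box(\gquote{\sigma})$ at $\self$ using $\Uni(\self)$ and concludes by Proposition \ref{Prop: iso elementarily equiv}, with no use of $\NEC$ in either part. The only difference is that the ``delicate bridge'' you flag between the primitive $\Mod(\self, \gquote{\sigma})$ and the Tarskian $\self \models \gquote{\sigma}$ is passed over in silence by the paper, whose proof applies Proposition \ref{Prop: iso elementarily equiv} directly to $\Mod(\self, \gquote{\sigma})$; your explicit appeal to the identification $\Mod(\self, \phi, f) \leftrightarrow \self \models (\phi, f)$ from the remark following System \ref{Sys: CM} makes your write-up, if anything, more scrupulous than the original.
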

\begin{proof}
\begin{enumerate}[{\rm(a)}]
\item From $\Tr^\Box(\gquote{\bot}) \rightarrow \bot$ we get $\neg \forall \mathcal{U} \in \Uni\dt \Mod(\mathcal{U}, \gquote{\bot})$, whence {\sf Non-Triviality}.

\item Let $\sigma \in \lang$, and assume $\Tr^\Box(\gquote{\sigma})$. Then $\Mod(\self, \gquote{\sigma})$. So by Proposition \ref{Prop: iso elementarily equiv}, we have $\sigma$.
\end{enumerate}
\end{proof}

$\mathsf{Comp}_\CM$ in Figure \ref{Fig: Semantically motivated multiverse axioms} includes two axioms that refer to the theory $\mathsf{Comp}_\CM$. These axioms can be constructed by G\"odel's fixed-point lemma. The following theorem shows that $(\Box \mapsto \Tr^\Box)$ generates an  interpretation of G\"odel-L\"ob provability logic in $\mathsf{Comp}_\CM$, which in turn is interpretable in $\ZF$.

\begin{thm}\label{Thm: GL interpretation}
\begin{enumerate}[{\rm(a)}]
\item There is an interpretation $\mathcal{B}$ of the system $\mathsf{K}$ of modal propositional logic\footnote{The system {\sf K} is regulated by the rule of modal necessitation, $\vdash \sigma \Rightarrow \hspace{3pt} \vdash \Box \sigma$, and the axiom schema {\sf K}, $\Box (\sigma \rightarrow \theta) \rightarrow (\Box \sigma \rightarrow \Box \theta)$.} in $\CM^- + \NEC$, satisfying any given assignment of the propositional variables, and
\begin{align*}
&\mathcal{B}(\Box \sigma) = \Tr^\Box(\udot{\mathcal{B}}(\gquote{\sigma})) \text{, for each modal propositional formula $\sigma$.}
\end{align*}
\item $\mathcal{B}$ above interprets the system $\mathsf{GL}$ of modal predicate logic\footnote{The system {\sf GL} extends {\sf K} with the axiom schema {\sf 4}, $\Box \sigma \rightarrow \Box \Box \sigma$, and with {\sf L\"ob's schema}, $\Box (\Box \sigma \rightarrow \sigma) \rightarrow \Box \sigma$.} in $\mathsf{Comp}_\CM$. In particular, 
\[
\mathsf{Comp}_\CM \vdash \mathsf{K}_\CM + \mathsf{4}_\CM + \textnormal{\sf L\"ob}_\CM + \NEC.
\]
\item\label{Item: ZF interprets Comp} $\ZF$ interprets $\mathsf{Comp}_\CM$. If $\ZF$ is closed under the {\sf Reflection rule} (or if $\ZF$ is $\omega$-consistent), then $\ZF$ interprets $\mathsf{Comp}_\CM + \CONEC$.
\end{enumerate}
\end{thm}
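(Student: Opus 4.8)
The plan is to handle the three parts in order, the heart of the whole argument being the identification, provably inside $\mathsf{Comp}_\CM$, of the box $\Tr^\Box$ with the provability predicate $\Pr_{\mathsf{Comp}_\CM}$. For (a), I would define $\mathcal{B}$ by recursion on modal propositional formulas: given an assignment $v$ of $\lang_{\Uni,\Mod}$-sentences to the propositional variables, put $\mathcal{B}(p)=v(p)$, let $\mathcal{B}$ commute with the Boolean connectives, and set $\mathcal{B}(\Box\sigma)=\Tr^\Box(\udot{\mathcal{B}}(\gquote\sigma))$, where $\udot{\mathcal{B}}$ is the primitive recursive function (from the term calculus of \S\ref{subsection: Term-calculus}) computing $\gquote{\mathcal{B}(\sigma)}$ from $\gquote\sigma$; since $\CM^-$ proves $\udot{\mathcal{B}}(\gquote\sigma)=\gquote{\mathcal{B}(\sigma)}$, the formula $\mathcal{B}(\Box\sigma)$ is provably $\Tr^\Box(\gquote{\mathcal{B}(\sigma)})$. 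Because $\mathcal{B}$ commutes with the connectives it respects propositional logic; the schema $\mathsf{K}$ translates to instances of $\CM^\Box_\rightarrow$ (Proposition \ref{Prop: CM box comp}), provable already in $\CM^-$; and modal necessitation is validated by $\NEC$, since $\CM^-+\NEC\vdash\mathcal{B}(\sigma)$ yields $\CM^-+\NEC\vdash\Tr^\Box(\gquote{\mathcal{B}(\sigma)})=\mathcal{B}(\Box\sigma)$. An induction on $\mathsf{K}$-derivations then shows $\mathcal{B}$ interprets $\mathsf{K}$ in $\CM^-+\NEC$.

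For (b) the crux is the identity
\begin{equation*}
\mathsf{Comp}_\CM\vdash\forall\phi\in\sent(\lang_{\Uni,\Mod})\dt\big(\Tr^\Box(\phi)\leftrightarrow\Pr_{\mathsf{Comp}_\CM}(\phi)\big).\tag{$\ast$}
\end{equation*}
The forward direction is literally $\mathsf{Completeness}_{\mathsf{Comp}_\CM}$. For the converse I would combine $\mathsf{Multiverse}_{\mathsf{Comp}_\CM}$, which places every sentence of the recursive presentation of $\mathsf{Comp}_\CM$ into $\{\phi\mid\Mod(\mathcal{U},\phi)\}$ for all $\mathcal{U}\in\Uni$, with the internalized Soundness Lemma (Lemma \ref{Lemma: CM soundness}), by which that set is deductively closed; replaying a $\mathsf{Comp}_\CM$-proof of $\phi$ inside each universe then gives $\Pr_{\mathsf{Comp}_\CM}(\phi)\to\Tr^\Box(\phi)$. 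Granting $(\ast)$, the box $\Tr^\Box$ inherits the Hilbert--Bernays--L\"ob derivability conditions and the formalized L\"ob theorem enjoyed by $\Pr_{\mathsf{Comp}_\CM}$ over $\ZF\subseteq\mathsf{Comp}_\CM$: since $(\ast)$ is itself a theorem, $\mathsf{Comp}_\CM$ proves $\Pr_{\mathsf{Comp}_\CM}(\gquote{\Tr^\Box(\gquote\sigma)})\leftrightarrow\Pr_{\mathsf{Comp}_\CM}(\gquote{\Pr_{\mathsf{Comp}_\CM}(\gquote\sigma)})$, whence $\mathsf{4}_\CM$ follows from the third derivability condition, $\textnormal{\sf L\"ob}_\CM$ from formalized L\"ob, and $\NEC$ from the first derivability condition, all via $(\ast)$. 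Together with $\mathsf{K}_\CM=\CM^\Box_\rightarrow$ this gives $\mathsf{Comp}_\CM\vdash\mathsf{K}_\CM+\mathsf{4}_\CM+\textnormal{\sf L\"ob}_\CM+\NEC$; and since every $\mathsf{GL}$-theorem is generated from tautologies and the $\mathsf{K}$-, $\mathsf{4}$-, L\"ob-schemas by modus ponens and necessitation, whose $\mathcal{B}$-translations are exactly these four principles, an induction on $\mathsf{GL}$-derivations shows $\mathcal{B}$ interprets $\mathsf{GL}$ in $\mathsf{Comp}_\CM$.

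For (c) I would interpret $\mathsf{Comp}_\CM$ in $\ZF$ directly by a model-class interpretation $\mathcal{I}$: domain $V$, $\mathcal{I}(\in)=\in$, interpret $\Uni(\mathcal{U})$ by the $\lang$-formula ``$\mathcal{U}$ is a set-model of $\mathsf{Comp}_\CM$'' and $\Mod(\mathcal{U},\phi,f)$ by genuine satisfaction $\mathcal{U}\models(\phi,f)$ (both $\ZF$-definable, the axiom set of $\mathsf{Comp}_\CM$ being recursive once $\Pr_{\mathsf{Comp}_\CM}$ is pinned down by its fixed point). Then $\Sep(\lang_{\Uni,\Mod})$ and $\Rep(\lang_{\Uni,\Mod})$ reduce to $\ZF$-instances for the $\lang$-formulas $\mathcal{I}(\psi)$; the compositional axioms hold because $\Mod$ is genuine satisfaction; $\mathsf{Multiverse}_{\mathsf{Comp}_\CM}$ is soundness of set-models; and $\mathsf{Completeness}_{\mathsf{Comp}_\CM}$ is the $\ZF$-provable completeness theorem (if $\mathsf{Comp}_\CM\not\vdash\phi$ then $\mathsf{Comp}_\CM+\negd\phi$ has a set-model falsifying $\phi$), so $\mathcal{I}$ interprets $\mathsf{Comp}_\CM$. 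Assuming finally that $\ZF$ is closed under the \textsf{Reflection rule} (which holds under $\omega$-consistency by Proposition \ref{Prop: omega-consistent reflection rule}), I extend $\mathcal{I}$ to $\mathsf{Comp}_\CM+\CONEC$ by induction on derivations, the only new case being $\CONEC$: here $\mathcal{I}(\Tr^\Box(\gquote\phi))$ says $\phi$ holds in every set-model of $\mathsf{Comp}_\CM$, which $\ZF$ proves equivalent to $\Pr_{\mathsf{Comp}_\CM}(\gquote\phi)$; since $\ZF$ verifies that $\mathcal{I}$ sends $\mathsf{Comp}_\CM$-proofs to $\ZF$-proofs, $\ZF\vdash\Pr_{\mathsf{Comp}_\CM}(\gquote\phi)\to\Pr_\ZF(\gquote{\mathcal{I}(\phi)})$, so $\ZF\vdash\mathcal{I}(\Tr^\Box(\gquote\phi))$ forces $\ZF\vdash\Pr_\ZF(\gquote{\mathcal{I}(\phi)})$ and then $\ZF\vdash\mathcal{I}(\phi)$ by the \textsf{Reflection rule}, validating $\CONEC$. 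The main obstacle I anticipate is part (b): ensuring the internalized soundness argument and the equivalence $(\ast)$ are genuine theorems of the self-referentially defined $\mathsf{Comp}_\CM$ (not merely externally true), since it is precisely their provability that powers the formalized L\"ob theorem behind $\textnormal{\sf L\"ob}_\CM$.
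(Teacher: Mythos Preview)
Your proposal is correct and follows essentially the same approach as the paper: the recursive construction of $\mathcal{B}$ in (a), the key equivalence $(\ast)$ proved via $\mathsf{Completeness}_{\mathsf{Comp}_\CM}$ and $\mathsf{Multiverse}_{\mathsf{Comp}_\CM}$ plus the Soundness Lemma in (b), and the model-class interpretation $\mathcal{I}$ with $\Uni(\mathcal{U})\mapsto(\mathcal{U}\models\mathsf{Comp}_\CM)$ in (c) are exactly what the paper does. Your treatment of $\CONEC$ in (c), passing through $\Pr_{\mathsf{Comp}_\CM}(\gquote\phi)\to\Pr_\ZF(\gquote{\mathcal{I}(\phi)})$ before invoking the \textsf{Reflection rule}, spells out a step the paper leaves implicit, and your worry about (b) is unfounded since the Soundness Lemma is already a theorem of $\CM^-\subseteq\mathsf{Comp}_\CM$ and the two self-referential axioms are axioms of $\mathsf{Comp}_\CM$ by construction.
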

\begin{proof}
\begin{enumerate}[{\rm(a)}]
\item The interpretation $\mathcal{B}$ can be constructed by primitive recursion, using the technique described in \cite{Hal14}[Ch. 5.3]. By $\NEC$, $\mathcal{B}$ validates the modal necessitation rule, and by $\CM^\Box_\rightarrow$ (from which $\mathsf{K}_\CM$ is easily derived), it validates {\sf K}.
\item By the Soundness Lemma, 
\[
\mathsf{Comp}_\CM \vdash \forall \sigma \in \sent(\lang_{\Uni, \Mod}) \dt (\Pr{}_{\GL_{\CM}}(\sigma) \leftrightarrow \Tr^\Box(\sigma)). \tag{*}
\] 
We apply the Hilbert-Bernays-L\"ob provability conditions. For $\NEC$, note that for each $\sigma \in \lang_{\Uni, \Mod}$, $\mathsf{Comp}_\CM \vdash \sigma \Rightarrow \mathsf{Comp}_\CM \vdash \Pr_{\mathsf{Comp}_\CM}(\gquote{\sigma})$, and apply (*). For $\mathsf{4}_\CM$,  note that for each $\sigma \in \lang_{\Uni, \Mod}$, $\mathsf{Comp}_\CM \vdash \Pr_{\mathsf{Comp}_\CM}(\gquote{\sigma}) \rightarrow \Pr_{\mathsf{Comp}_\CM}(\gquote{\Pr_{\mathsf{Comp}_\CM}(\gquote{\sigma})})$, and apply (*) both externally and internally. By L\"ob's Theorem (see Lemma 13.7 in \cite{Hal14}) and the preceding item, we are done. 
\item Let $\mathcal{C}$ be the interpretation generated by:
\begin{align*}
\Uni(\mathcal{U}) &\mapsto \mathcal{U} \models \mathsf{Comp}_\CM \\
\Mod(\Uni, \phi, f) &\mapsto \mathcal{U} \models (\phi, f)
\end{align*}
By the Tarskian conditions of satisfaction, $\ZF \vdash \mathcal{C}(\CM^-)$. By construction of $\mathcal{C}$, $\ZF \vdash \mathcal{C}(\mathsf{Multiverse}_{\mathsf{Comp}_\CM}).$ By construction of $\mathcal{C}$ and the Completeness theorem, $\ZF \vdash \mathcal{C}(\mathsf{Completeness}_{\mathsf{Comp}_\CM}).$ From (*), it is easily seen that the {\sf Reflection rule} yields $\CONEC$. By Proposition \ref{Prop: omega-consistent reflection rule}, $\omega$-consistency suffices.
\end{enumerate}
\end{proof}

\section{Interpreting the Copernican multiverse of sets}\label{Sec: Interpret}

Now we proceed to lay forth a technique for validating the theories in \S \ref{Sec: multiverse theory} by means of the revision-semantic construction in \S \ref{sec: construction}. The Main Lemma establishes that a variety of Copernican multiverse theories can by interpreted in a suitable hierarchy of theories. 

First we need a lemma establishing a normal form for derivation in $\MS$. The analogous result for the case of the Friedman--Sheard theory of truth (over arithmetic) was established by \cite{Bro21}. The authors are grateful to Broberg for allowing the inclusion of his proof re-worked for the system $\MS$. We write $\MS^-_\mathrm{NC} + S$ for the system whose theorems are the conclusions of Hilbert-style proofs in $\MS^- + S$ such that all applications of $\NEC$ are before all applications of $\CONEC$. 

\begin{lemma}\label{Lemma: Antons lemma}
Let $S$ be a theory. If $\MS^- + S \vdash \psi$, then there exists $\chi$ such that $\CM^- + \NEC + S \vdash \chi$ and $\CM^- + \CONEC + S + \chi \vdash \psi$.
\end{lemma}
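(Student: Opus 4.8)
The plan is to argue by induction on the length of a Hilbert-style derivation of $\psi$ in $\MS^- + S = \CM^- + \NEC + \CONEC + S$, writing $N := \CM^- + \NEC + S$ for the ``$\NEC$-only'' subsystem and taking as induction hypothesis exactly the statement of the lemma applied to the (shorter) derivations of the premises of the last inference. When the last step is a logical axiom or an axiom of $\CM^-$ or $S$, one takes $\chi$ to be any $N$-provable sentence. For modus ponens on $\phi \rightarrow \psi$ and $\phi$ the hypothesis yields witnesses $\chi_1,\chi_2$, and $\chi := \chi_1 \wedge \chi_2$ works, since $N$ proves conjunctions of its theorems and, by monotonicity, $\CM^- + \CONEC + S + \chi$ recovers both premises and applies modus ponens; generalisation is analogous, and a final $\CONEC$-step is handled by applying the hypothesis to its premise $\Tr^\Box(\gquote{\psi})$ and then stripping the box with one more $\CONEC$.

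The only case requiring work is when the last inference is an application of $\NEC$, so $\psi = \Tr^\Box(\gquote{\theta})$ and $\theta$ has a shorter derivation. By the hypothesis there is $\chi_0$ with $N \vdash \chi_0$ and $\CM^- + \CONEC + S + \chi_0 \vdash \theta$. First I would linearise this $\CONEC$-only derivation and list, in order of extraction, the sentences $\beta_1, \dots, \beta_m$ produced by its $\CONEC$-applications. Since each $\Tr^\Box(\gquote{\beta_i})$ occurs before $\beta_i$ is extracted and so depends only on the earlier extractions, the deduction theorem gives, for $1 \le i \le m$,
\[
\CM^- + S + \chi_0 \vdash \Big(\textstyle\bigwedge_{j < i} \beta_j\Big) \rightarrow \Tr^\Box(\gquote{\beta_i}),
\]
\[
\CM^- + S + \chi_0 \vdash \Big(\textstyle\bigwedge_{j \le m} \beta_j\Big) \rightarrow \theta,
\]
where the empty conjunction ($i = 1$) is $\top$; here only finitely many axioms of $S$ and $\CM^-$ are invoked, so the deduction theorem genuinely applies.

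The crux is to repackage these into a single $N$-theorem $\chi$ from which $\Tr^\Box(\gquote{\theta})$ is recoverable using only $\CONEC$. Since $N \vdash \chi_0$, each displayed implication is an $N$-theorem, so I may apply $\NEC$ to it and then distribute the resulting outer box using the compositional laws $\CM^\Box_\rightarrow$ and $\CM^\Box_\wedge$ of Proposition \ref{Prop: CM box comp}. This yields, for each $i$,
\[
N \vdash \Big(\textstyle\bigwedge_{j < i} \Tr^\Box(\gquote{\beta_j})\Big) \rightarrow \Tr^\Box\big(\gquote{\Tr^\Box(\gquote{\beta_i})}\big),
\]
together with $N \vdash \big(\bigwedge_{j \le m} \Tr^\Box(\gquote{\beta_j})\big) \rightarrow \Tr^\Box(\gquote{\theta})$; note that internalising has added exactly one extra layer of $\Tr^\Box$ to each consequent. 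Let $\chi$ be the conjunction of these finitely many $N$-theorems, so $N \vdash \chi$. Working in $\CM^- + \CONEC + S + \chi$, I would derive $\Tr^\Box(\gquote{\beta_1}), \dots, \Tr^\Box(\gquote{\beta_m})$ by induction on $i$: the $i$-th conjunct of $\chi$ together with the already-derived $\Tr^\Box(\gquote{\beta_j})$ for $j < i$ gives $\Tr^\Box(\gquote{\Tr^\Box(\gquote{\beta_i})})$ by modus ponens, and one application of $\CONEC$ strips the surplus layer to give $\Tr^\Box(\gquote{\beta_i})$. The final conjunct then delivers $\Tr^\Box(\gquote{\theta}) = \psi$.

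I expect the $\NEC$ case to be the main obstacle, and within it the essential idea is this ``lift-and-peel'' move: internalising the entire $\CONEC$-derivation via $\NEC$ raises every sentence by one $\Tr^\Box$-layer, and $\CONEC$ is precisely the tool that removes that surplus layer at each stage, which is what lets the two rules be commuted past one another. The remaining points—checking that the box-distribution is available for the coded conjunctions, and that adding the conjunct $\chi$ never disturbs the $\CONEC$-applications by monotonicity—are routine given the compositional machinery of $\CM^-$.
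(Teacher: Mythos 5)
Your proof is correct, and it handles the crux by a genuinely different mechanism than the paper's. The paper does not build the interpolant $\chi$ directly: it proves the equivalent normal-form statement that $\MS^- + S \vdash \psi$ implies $\MS^-_\mathrm{NC} + S \vdash \psi$, i.e.\ that $\psi$ has a Hilbert-style proof in which every application of $\NEC$ precedes every application of $\CONEC$ (your statement and this one are interderivable: concatenating your two proofs gives a normal-form proof, and conversely $\chi$ can be read off the $\NEC$-phase of a normal-form proof). The paper's treatment of the key case --- $\NEC$ applied to a formula whose derivation used $\CONEC$ --- internalizes that derivation row by row under $\Tr^\Box$: first-order inferences are pushed through the box using the Soundness Lemma (Lemma \ref{Lemma: CM soundness}), a $\CONEC$ step costs nothing because its boxed conclusion is literally its premise, and a nested $\NEC$ step forces the subproof to be $\CONEC$-free, so one further application of $\NEC$ suffices. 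You instead compress the $\CONEC$-subderivation into finitely many first-order implications via the deduction theorem --- correctly isolating the purely first-order fragments by treating the extracted sentences $\beta_j$ as hypotheses, which is exactly the point where carelessness would be fatal, since the deduction theorem does not hold across rule applications --- then lift these implications with $\NEC$ and distribute the box using $\CM^\Box_\rightarrow$ and $\CM^\Box_\wedge$ (Proposition \ref{Prop: CM box comp}), and finally peel the surplus box layer with fresh $\CONEC$ applications on the other side. What each approach buys: the paper's row-by-row internalization never needs new $\CONEC$ applications to simulate old ones, and it delivers the normal form in precisely the bookkeeping format (counting $\NEC$ and $\CONEC$ applications along a proof) that the proof of the Main Lemma then consumes; your lift-and-peel argument is more modular, resting on the already-stated compositional propositions rather than re-running a soundness argument inside the induction, and it makes the interpolant $\chi$ and the number of residual $\CONEC$ applications completely explicit. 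Since your version also yields $\MS^-_\mathrm{NC} + S \vdash \psi$ by the concatenation remark above, it supports the downstream use of the lemma without modification.
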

\begin{proof}
For simplicity, we assume that all formulas in all theories and proofs considered are sentences; there is an adequate Hilbert-style proof system meeting this assumption. It suffices to show that $\MS^-_\mathrm{NC} + S \vdash \psi$. 

Let $\rho$ be a Hilbert-style proof, with $\psi_0, \cdots, \psi_{l-1}$ as rows, of $\psi_{l-1}$ in $\MS^- + S$. By induction, we may assume that $\MS^-_\mathrm{NC} + S \vdash \psi_r$, for each $r < l-1$. There are four cases to consider as to how the last row of $\rho$ is obtained:

\begin{description}
\item[(Axiom)] $\psi_{l-1}$ is an axiom.
\item[(First-order)] $\psi_{l-1}$ is derived by a rule of inference of first-order logic.
\item[(CONEC)] $\psi_{l-1}$ is derived from $\psi_{r}$ by $\CONEC$, for some $r < l-1$.
\item[(NEC)] $\psi_{l-1}$ is derived from $\psi_{r}$ by $\NEC$, for some $r < l-1$.
\end{description}

We proceed to establish $\MS^-_\mathrm{NC} + S \vdash \psi_{l-1}$ for each case.

\begin{description}
\item[(Axiom)] In this case $\psi_{l-1}$ is also an axiom of $\MS^-_\mathrm{NC} + S$ (a proof of length $1$).
\item[(First-order)] In this case $\psi_{l-1}$ is also derived by the same rule in $\MS^-_\mathrm{NC} + S$, utilizing the induction hypothesis.
\item[(CONEC)] In this case $\psi_{l-1}$ is also derived by $\CONEC$ in $\MS^-_\mathrm{NC} + S$, utilizing the induction hypothesis and that this application of $\CONEC$ is right at the end, after all applications of $\NEC$.
\item[(NEC)] This is the case requiring work. We have that $\psi_{l-1}$ is the sentence $\Tr^\Box(\gquote{\psi_r})$ and that $\MS^-_\mathrm{NC} + S \vdash \psi_r$. Let $\theta_0, \cdots, \theta_{k-1} = \psi_r$ be the rows of a Hilbert-style proof $\pi$ witnessing this. We proceed to show $\MS^-_\mathrm{NC} + S \vdash \Tr^\Box(\gquote{\theta_q})$, for each $q \leq k-1$. By induction, we may assume that this holds for each $q < k-1$. Again there are four cases to consider as to how the last row of $\pi$ is obtained:

	\begin{description}
	\item[(Axiom')] $\theta_{k-1}$ is an axiom.
	\item[(First-order')] $\theta_{k-1}$ is derived by a rule of inference of first-order logic.
	\item[(CONEC')] $\theta_{k-1}$ is derived from $\theta_{q}$ by $\CONEC$, for some $q < k-1$.
	\item[(NEC')] $\theta_{k-1}$ is derived from $\theta_{q}$ by $\NEC$, for some $q < k-1$.
	\end{description}
	
	We proceed to establish $\MS^-_\mathrm{NC} + S \vdash \Tr^\Box(\gquote{\theta_{k-1}})$ for each case.
	
	\begin{description}
	\item[(Axiom')] In this case we apply $\NEC$ to $\theta_{k-1}$ to obtain a proof of $\Tr^\Box(\gquote{\theta_{k-1}})$ (of length $2$).
	\item[(First-order')] We have $q_0 < \cdots < q_n < k-1$, such that $\{\theta_{q_0}, \cdots, \theta_{q_n}\} \vdash \theta_{k-1}$. By the induction hypothesis, $\MS^-_\mathrm{NC} + S \vdash \Tr^\Box(\gquote{\theta_{q_i}})$, for each $0 \leq i \leq n$. Clearly, these proofs can be merged (respecting the requirement on the order of the applications of $\NEC$ and $\CONEC$) into one Hilbert-style proof in $\MS^-_\mathrm{NC} + S$ in which $\Tr^\Box(\gquote{\theta_{q_0}}), \cdots, \Tr^\Box(\gquote{\theta_{q_n}})$ are derived. 
	
	Now note that by the Soundness Lemma, 
	\[\CM^- + \{\Tr^\Box(\gquote{\theta_{q_0}}), \cdots, \Tr^\Box(\gquote{\theta_{q_n}})\} \vdash \Tr^\Box(\gquote{\theta_{k-1}}).\] 
	We add a proof of that (which has no applications of $\NEC$ or $\CONEC$) to the end of the previous proof, obtaining $\MS^-_\mathrm{NC} + S \vdash \Tr^\Box(\gquote{\theta_{k-1}})$, as desired.
	\item[(CONEC')] In this case $\Tr^\Box(\gquote{\theta_{k-1}})$ equals $\theta_q$, which we already have a proof of.
	\item[(NEC')] In this case the last step of $\pi$ is obtained by $\NEC$, so there cannot be any application of $\CONEC$ in $\pi$. Hence, we can make an extra application of $\NEC$ at the end of $\pi$ to obtain a proof of $\Tr^\Box(\gquote{\theta_{k-1}})$ in $\MS^-_\mathrm{NC} + S$. \qedhere
	\end{description}	
\end{description}
\end{proof}

Before embarking on proving the Main Lemma of the paper, we introduce notation for the kind of interpretations involved. The Main Lemma encapsulates the revision-semantic construction of a model of ``the Copernican multiverse of sets''.

Let $\Trm, \Unirm, \Modrm_0$ be revision parameters. Let $S$ be a set theory in a language $L$ and let $t$ be an $L$-term such that $S \vdash t \in \mathbb{N}$. Then $\mathcal{I}^{\Unirm, \Modrm_0}_{L, t}$ denotes the interpretation of the language  $L_{\Uni, \Mod}$ into $L$ generated by interpreting $\Mod$ as $\Modrm_t$ (obtained from Construction \ref{Constr: Multiverse semantics}) and $\Uni$ as $\Unirm_t$. (Although the full notation is $\mathcal{I}^{\Unirm, \Modrm_0}_{L, t}$, with $L, \Uni, \Mod$ fixed, for example, we denote this interpretation by $\mathcal{I}_t$.) Note that this interpretation fixes each formula in $L$. 

More generally, if $\mathcal{S} \vdash \exists^! x \dt (\phi(x) \wedge x \in \mathbb{N})$, then $\mathcal{I}_\phi$ denotes the interpretation generated by interpreting $\Mod$ by $\forall x \in \mathbb{N} \dt (\phi(x) \rightarrow \Modrm_x)$ and $\Uni$ by $\forall x \in \mathbb{N} \dt (\phi(x) \rightarrow \Unirm_x)$. We may expand the language with a constant symbol $c_\phi$ and extend $\mathcal{S}$ with the axiom $\forall x \dt (\phi(x) \leftrightarrow x = c_\phi)$, to produce an interpretation $\mathcal{I}_{c_\phi}$ equivalent to $\mathcal{I}_\phi$.

Let $T_0$ and $T_1$ be theories in the languages $L_0$ and $L_1$, respectively. Let $\mathfrak{F}$ be a family (set) of interpretations from $L_0$ to $L_1$. We say that $\mathfrak{F}$ is a {\em local interpretation} of $T_0$ in $T_1$ if for any finite set $T'_0$ of consequences of $T_0$, there is an $\mathcal{I} \in \mathfrak{F}$ which interprets $T'_0$ in $T_1$. Alternatively, we say that $T_1$ locally interprets $T_0$ by $\mathfrak{F}$, respectively.

Given revision parameters $\Trm, \Unirm, \Modrm_0$, recall that $\lang^\Trm$ is the language of the theories $\Tb_n$ and that $\lang^\Rev$ is any sublanguage of $\lang^\Trm$.

\begin{mainlemma*}\label{Main Lemma: Reflective construction}
Let $\Trm$, $\Unirm$ and $\Modrm_0$ be revision parameters such that $\Tb_\omega \vdash \ZF$, let $\mathfrak{F} = \{\mathcal{I}_{\underline{k}} \mid k \in \mathbb{N}\}$ and let $S$ be an $\lang^\Rev_{\Uni, \Mod}$-theory such that for any finite $\Gamma \subseteq S$, 
\[
\exists A \in \mathbb{N} \dt \forall k \in \mathbb{N} \dt [A \leq k \rightarrow \Tb_k \vdash \mathcal{I}_{\underline{k}}^{\Unirm, \Modrm_0}(\Gamma)].
\]
\begin{enumerate}[{\rm (a)}]
\item If $\Trm, \Unirm, \Modrm_0$ admit $\NEC^*$, then $\Tb_\omega$ locally interprets $\CM^- + \NEC + S$ by $\mathfrak{F}$.
\item If $\Trm, \Unirm, \Modrm_0$ admit $\CONEC^*$, then $\Tb_\omega$ locally interprets $\CM^- + \CONEC + S$ by $\mathfrak{F}$.
\item If $\Trm, \Unirm, \Modrm_0$ admit $\NEC^*$ and $\CONEC^*$, then $\Tb_\omega$ locally interprets $\MS^- + S$ by $\mathfrak{F}$.
\end{enumerate}
\end{mainlemma*}
\begin{proof}
We prove the latter, most complicated assertion; the other two assertions follow by restricting the proof to the appropriate cases. Assume that $\Trm, \Unirm, \Modrm_0$ are revision parameters admitting $\NEC^*$ and $\CONEC^*$. 

Assume that $\MS^- + S \vdash \psi$. By Lemma \ref{Lemma: Antons lemma}, we have that $\MS^-_\mathrm{NC} + S \vdash \psi$. Let $\rho$ be a linear Hilbert-style proof witnessing this. Let $\Gamma$ be the axioms of $\CM^- + S$ occurring in $\rho$. We shall start by showing that
\[
\exists A' \in \mathbb{N} \dt \forall k \in \mathbb{N} \dt [A' \leq k \rightarrow \Tb_k \vdash \mathcal{I}_{\underline{k}}^{\Unirm, \Modrm_0}(\Gamma)]. \tag{*}
\]

We have by the assumption of the lemma that there is $A < \omega$, such that for any $k < \omega$ with $A \leq k$, we have $\Tb_k \vdash \mathcal{I}^{\Uni, \Mod_0}_{\underline{k}}(\phi)$, for every axiom $\phi$ of $S$ in $\Gamma$. Moreover, note that for any $k < \omega$, and any axiom $\phi$ of $\ZF + \Sep(\lang_{\Uni, \Mod}) + \Rep(\lang_{\Uni, \Mod})$, $\mathcal{I}^{\Uni, \Mod_0}_{\underline{k}}(\phi)$ is an axiom of $\ZF$. So since $\Tb_\omega \vdash \ZF$, there is $A' \geq 1$ with $A \leq A' < \omega$, such that for any $k < \omega$ with $A' \leq k$, we have $\Tb_{k} \vdash \mathcal{I}^{\Uni, \Mod_0}_{\underline{k}}(\phi)$, for every axiom $\phi$ of $\ZF + \Sep(\lang_{\Uni, \Mod}) + \Rep(\lang_{\Uni, \Mod})$ in $\Gamma$.

Suppose that $\phi$ is a compositional axiom (of the form $\CM_-$). Then $\forall A' \leq k < \omega \dt [\Tb_k \vdash \mathcal{I}^{\Unirm, \Modrm_0}_{\underline{k}}(\phi)]$ follows from that for all $\lang^\Trm$-structures $\mathcal{U}$ and all $\lang^\Rev_{\Uni, \Mod}$-formulas $\phi$, it is provable that for all $A' \leq k < \omega$,
\[
\Modrm_{{k}}(\mathcal{U}, \phi) \iff \langle \mathcal{U}\restr_{\lang^\Rev}, \gquote{\Unirm_{{k-1}}}^\mathcal{U}, \gquote{\Modrm_{{k-1}}}^\mathcal{U} \rangle \models \phi
\]
(using $1 \leq A'$), and from that the corresponding compositional conditions hold for $\models$. 

Hence, $A'$ satisfies (*) as desired. We introduce shifted parameters $\Trm', \Unirm', \Modrm'_0$ defined by $\Trm'_k \equiv_\df \Trm_{k+\underline{A'}}$, $\Unirm'_k \equiv_\df \Unirm_{k+\underline{A'}}$ and $\Modrm'_k \equiv_\df \Modrm_{k+\underline{A'}}$. Note that $\Modrm'$ also satisfies ($\dagger$) in Construction \ref{Constr: Multiverse semantics}. It is easily seen that $\Trm', \Unirm', \Modrm'_0$ are revision parameters admitting $\NEC^*$ and $\CONEC^*$, and that $\Tb'_\omega = \Tb_\omega$. Note that for all $\phi$, it is provable that for all $k < \omega$, $\mathcal{I}^{\Unirm', \Modrm'_0}_{\underline{k}}(\phi) \leftrightarrow \mathcal{I}^{\Unirm, \Modrm_0}_{\underline{k+A'}}(\phi)$. 

We index the sequence of steps in the proof by numbers $0, 1, \cdots, l-1$, where $l$ is the length of $\rho$. For each $q < l$, let $\psi_q$ be the derived formula (or axiom) at step $q$ of $\rho$ (so $\psi = \psi_{l-1}$), let $N_q$ be the number of applications of $\NEC$ in the derivation of $\psi_q$, and let $C_q$ be the number of applications of $\CONEC$ in the derivation of $\psi_q$. It suffices to show that there are natural numbers $m, n$, such that $\Tb'_m \vdash \mathcal{I}^{\Unirm', \Modrm'_0}_{\underline{n}}(\psi_q)$, for each $q < l$. We do so by induction on the steps of $\rho$. Let $r < l$. Here is our induction hypothesis: 

\begin{itemize}
\item[(IH)] $\Tb'_{2C_q + k} \vdash \mathcal{I}^{\Unirm', \Modrm'_0}_{\underline{k}}(\psi_q)$, for any step $q < r$, and for any $k \geq N_q + 1$.
\end{itemize}

We need to show that $\Tb'_{2C_r + k} \vdash \mathcal{I}^{\Unirm', \Modrm'_0}_{\underline{k}}(\psi_r)$, whenever $N_r + 1 \leq k$. There are four cases, as to which rule of inference (if any) is applied to obtain $\psi_r$ from $\{\psi_q \mid q < r\}$:

\begin{description}
\item[(Axiom)] $\psi_r$ is an axiom in $\Gamma$.
\item[(First-order)] $\psi_r$ is derived by a rule of inference of first-order logic.
\item[(NEC)] $\psi_r$ is derived from $\psi_{r'}$ by $\NEC$, for some $r' < r$.
\item[(CONEC)] $\psi_r$ is derived from $\psi_{r'}$ by $\CONEC$, for some $r' < r$.
\end{description}
Let $k \geq N_r + 1$. We proceed to show $\Tb'_{2C_r + k} \vdash \mathcal{I}^{\Unirm', \Modrm'_0}_{\underline{k}}(\psi_r)$ in each of the above cases. The fact that $\Tb'_a \vdash \Tb'_b$, for any $0 \leq b \leq a \in \mathbb{N}$, will be used repeatedly without mention.

\begin{description}
\item[(Axiom)] Suppose that $\psi_r$ is an axiom in $\Gamma$. We have $\Tb'_k \vdash \mathcal{I}^{\Unirm', \Modrm'_0}_{\underline{k}}(\phi_r)$, since $\Tb'_k \vdash \Tb_{k+A'}$ and $\Tb_{k+A'} \vdash \mathcal{I}^{\Unirm, \Modrm_0}_{\underline{k+A'}}(\phi_r)$.

\item[(First-order)] By (IH), $\Tb'_{2C_r + k} \vdash \mathcal{I}^{\Unirm', \Modrm'_0}_{\underline{k}}(\phi_q)$, for each $q < r$. Since $\mathcal{I}^{\Unirm', \Modrm'_0}_{\underline{k}}$ is an interpretation, it respects the inference rules of first-order logic. Therefore, $\Tb'_{2C_r + k} \vdash \mathcal{I}^{\Unirm', \Modrm'_0}_{\underline{k}}(\psi_r)$.

\item[(NEC)] In this case $\psi_r$ is $\forall \mathcal{U} \in \Uni\dt (\Mod(\mathcal{U}, \gquote{\psi_{r'}}))$. Note that $C_r = 0$ and $N_{r'} < N_r$.
\begin{align*}
\Tb'_{k-1} &\vdash \mathcal{I}^{\Unirm', \Modrm'_0}_{\underline{k-1}}(\psi_{r'}) & & \text{(IH)} \\
\Tb'_{k} &\vdash \forall \mathcal{U} \in \Unib'_{\underline{k}} \dt ( \mathcal{U} \models \gquote{\mathcal{I}^{\Unirm', \Modrm'_0}_{\underline{k-1}}(\psi_{r'})}) & & \NEC^* \\
\Tb'_{k} &\vdash \forall \mathcal{U} \in \Unib'_{\underline{k}} \dt (\Modrm'_{\underline{k}}(\mathcal{U}, \gquote{\psi_{r'}})) & & \text{($\dagger$) in Construction \ref{Constr: Multiverse semantics}} \\
\Tb'_{k} &\vdash \mathcal{I}^{\Unirm', \Modrm'_0}_{\underline{k}} \big(\forall \mathcal{U} \in \Uni\dt (\Mod(\mathcal{U}, \gquote{\psi_{r'}})) \big) & & \text{Definition of } \mathcal{I}^{\Unirm', \Modrm'_0}_{\underline{k}}
\end{align*}

\item[(CONEC)] In this case $\psi_{r'}$ is $\forall \mathcal{U} \in \Uni\dt (\Mod(\mathcal{U}, \gquote{\psi_{r}}))$.
\begin{align*}
\Tb'_{2C_{r'} + k+1} &\vdash \mathcal{I}^{\Unirm', \Modrm'_0}_{\underline{k+1}} \big(\forall \mathcal{U} \in \Uni\dt (\Mod(\mathcal{U}, \gquote{\psi_r}))\big) & & \text{(IH)} \\
\Tb'_{2C_{r'} + k+1} &\vdash \forall \mathcal{U} \in \Unib'_{\underline{k + 1}} \dt (\Modrm'_{\underline{k + 1}}(\mathcal{U}, \gquote{\psi_r})) & & \text{Definition of } \mathcal{I}^{\Unirm', \Modrm'_0}_{\underline{k+1}} \\
\Tb'_{2C_{r'} + k+1} &\vdash \forall \mathcal{U} \in \Unib'_{\underline{k + 1}} \dt (\mathcal{U} \models \gquote{\mathcal{I}^{\Unirm', \Modrm'_0}_{\underline{k}}(\psi_r)}) & & \text{($\dagger$) in Construction \ref{Constr: Multiverse semantics}} \\
\Tb'_{2C_{r'} + k+1} &\vdash \forall \mathcal{U} \in \Unib'_{\underline{2C_{r'} + k+1}} \dt (\mathcal{U} \models \gquote{\mathcal{I}^{\Unirm', \Modrm'_0}_{\underline{k}}(\psi_r)}) & & \Unib'_{\underline{2C_{r'} + k+1}} \subseteq \Unib'_{\underline{k + 1}} \\
\Tb'_{2C_{r'} + k+2} &\vdash \mathcal{I}^{\Unirm', \Modrm'_0}_{\underline{k}}(\psi_r) & & \CONEC^* \\
\Tb'_{2C_r + k} &\vdash \mathcal{I}^{\Unirm', \Modrm'_0}_{\underline{k}}(\psi_r) & & C_{r'} < C_r
\end{align*}
\end{description}

This completes the proof of the lemma. We close by recording the more detailed statement that we have actually proved: If $N,C$ are the number of applications of $\NEC$ and $\CONEC$, respectively, in a proof of $\psi$ in $\MS^-_\mathrm{NC} + S$, then 
\[
\Tb'_{2C + N + 1 + A'} \vdash \mathcal{I}^{\Unirm', \Modrm'_0}_{\underline{N + 1 + A'}}(\psi).
\]
\end{proof}

\begin{cor}\label{Cor: T interprets MS}
Let $\Trm$, $\Unirm$ and $\Modrm_0$ be revision parameters such that for some $B \in \mathbb{N}$, $\Tb_B \vdash \ZF$. Let $\mathfrak{F} = \{\mathcal{I}_{\underline{k}} \mid k \in \mathbb{N}\}$ and let $S$ be a theory (possibly with $\Uni, \Mod$ in its language) such that for any finite $\Gamma \subseteq S$, 
\[
\exists A \in \mathbb{N} \dt \forall k \in \mathbb{N} \dt [A \leq k \rightarrow \Tb_k \vdash \mathcal{I}_{\underline{k}}^{\Unirm, \Modrm_0}(\Gamma)].
\]
\begin{enumerate}[{\rm (a)}]
\item If $\Trm, \Unirm, \Modrm_0$ admit $\NEC^*$, then $\Tb_\omega$ locally interprets $\CM + \NEC + S$ by $\mathfrak{F}$.
\item If $\Trm, \Unirm, \Modrm_0$ admit $\CONEC^*$, then $\Tb_\omega$ locally interprets $\CM + \CONEC + S$ by $\mathfrak{F}$.
\item If $\Trm, \Unirm, \Modrm_0$ admit $\NEC^*$ and $\CONEC^*$, then $\Tb_\omega$ locally interprets $\MS + S$ by $\mathfrak{F}$.
\end{enumerate}
\end{cor}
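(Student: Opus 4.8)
The plan is to reduce the Corollary to the Main Lemma by folding the scope axiom into the parameter theory. Set $S^+ =_\df S + \mathsf{Multiverse}_{\ZF}$, noting that $S^+$ is again an $\lang^\Rev_{\Uni,\Mod}$-theory since $\mathsf{Multiverse}_{\ZF}$ is an $\lang_{\Uni,\Mod}$-sentence and $\lang \subseteq \lang^\Rev$. Because $\CM = \CM^- + \mathsf{Multiverse}_{\ZF}$, and adjoining $\mathsf{Multiverse}_{\ZF}$ as an axiom does not change which proofs the rules may act on, we have $\CM^- + \NEC + S^+ = \CM + \NEC + S$, and likewise $\CM^- + \CONEC + S^+ = \CM + \CONEC + S$ and $\MS^- + S^+ = \MS + S$. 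Moreover $\Tb_B \vdash \ZF$ yields $\Tb_\omega \vdash \ZF$, so the parameter hypothesis of the Main Lemma holds. Hence it suffices to check that $S^+$ satisfies the interpretability condition of the Main Lemma; the three clauses of the Corollary then follow from the corresponding clauses applied to $S^+$.

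To verify the condition for $S^+$, I would take a finite $\Gamma \subseteq S^+$ and split it as $\Gamma = \Gamma_S \cup \Gamma_M$ with $\Gamma_S \subseteq S$ and $\Gamma_M \subseteq \{\mathsf{Multiverse}_{\ZF}\}$. For $\Gamma_S$ the required threshold is supplied by the hypothesis of the Corollary, so the entire content is to produce a stage from which the single sentence $\mathsf{Multiverse}_{\ZF}$ is interpreted, i.e.\ to show
\[
\Tb_k \vdash \mathcal{I}_{\underline{k}}(\mathsf{Multiverse}_{\ZF}) \qquad \text{for all } k \geq B+1;
\]
the maximum of this bound with the threshold for $\Gamma_S$ then witnesses the condition for $\Gamma$.

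Unwinding the interpretation, $\mathcal{I}_{\underline{k}}(\mathsf{Multiverse}_{\ZF})$ asserts that for every $\mathcal{U}$ with $\Unirm_{\underline{k}}(\mathcal{U})$ and every $\sigma \in \udot{\ZF}_{\Uni,\Mod}$ we have $\Modrm_{\underline{k}}(\mathcal{U}, \sigma)$. For $k \geq 1$ the fixed-point equation $(\dagger)$ of Construction~\ref{Constr: Multiverse semantics} rewrites $\Modrm_{\underline{k}}(\mathcal{U},\sigma)$ as satisfaction of $\sigma$ in the definable expansion $\mathcal{M}^\mathcal{U}_k =_\df \langle \mathcal{U}\restr_{\lang^\Rev}, \gquote{\Unirm_{\underline{k-1}}}^\mathcal{U}, \gquote{\Modrm_{\underline{k-1}}}^\mathcal{U}\rangle$ of $\mathcal{U}$, so the task becomes proving in $\Tb_k$ that $\mathcal{M}^\mathcal{U}_k \models \ZF_{\Uni,\Mod}$ for every stage-$k$ universe $\mathcal{U}$. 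I would split $\ZF_{\Uni,\Mod} = \ZF + \Sep(\lang_{\Uni,\Mod}) + \Rep(\lang_{\Uni,\Mod})$. The pure part collapses to $\mathcal{U}\restr_\lang \models \ZF$, as $\Uni,\Mod$ do not occur in $\lang$-sentences. For the two schemas, since $k$ is standard the codes $\gquote{\Unirm_{\underline{k-1}}}$ and $\gquote{\Modrm_{\underline{k-1}}}$ are standard $\lang$-formulas, so each $\lang_{\Uni,\Mod}$-instance $\sigma$ folds, by a primitive recursive substitution carried out inside $\mathcal{U}$, into an $\lang$-instance $\sigma'$ with $\mathcal{M}^\mathcal{U}_k \models \sigma \leftrightarrow \mathcal{U}\restr_\lang \models \sigma'$, provable by internal induction on $\sigma$; being uniform in $\sigma$ this applies to nonstandard $\sigma$ as well, and $\mathcal{U}\restr_\lang \models \sigma'$ follows from $\mathcal{U}\restr_\lang \models \ZF$ since the latter carries the full internal Separation and Replacement schemas. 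Thus both parts reduce to the single internal statement $\Tb_k \vdash \forall \mathcal{U} \in \Unib_{\underline{k}} \dt \mathcal{U}\restr_\lang \models \udot{\ZF}$.

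The hard part will be exactly this last reduction: establishing, internally to $\Tb_k$ and uniformly in the possibly nonstandard code $\sigma \in \udot{\ZF}$, that every stage-$k$ universe models $\udot{\ZF}$. This is where the strengthened hypothesis—full $\ZF$ at a single finite stage $B$ rather than merely $\Tb_\omega \vdash \ZF$—is indispensable: it gives, provably inside $\Tb_k$ for $k > B$, the internalised schema $\forall \sigma \in \udot{\ZF} \dt \Pr_{\Tb_{k-1}}(\sigma)$, which is a uniform derivation of $\ZF$ in $\Tb_{k-1}$ that is simply unavailable when $\ZF$ is only proved in the limit. Combining this with the soundness of the stage-$k$ universes over $\Tb_{k-1}$—the internal content of the condition $\textnormal{\sf Soundness}^*$ that already underlies $\NEC^*$ in Lemma~\ref{Lemma: Admissible rules on revision parameters}(a)—yields $\mathcal{U}\restr_\lang \models \udot{\ZF}$ for all $\mathcal{U} \in \Unib_{\underline{k}}$, including at nonstandard instances. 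I would carry out this soundness step by internalising the argument of Lemma~\ref{Lemma: Admissible rules on revision parameters}(a), now applied to the full recursive axiomatisation of $\ZF$ captured at the finite stage $B$; isolating precisely which soundness is guaranteed by the revision parameters (as opposed to being added by hand in the intended applications) is the delicate point to get exactly right.
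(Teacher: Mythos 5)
Your proposal is structurally identical to the paper's proof: the paper likewise reduces the Corollary to the Main Lemma by replacing $S$ with $S + \mathsf{Multiverse}_\ZF$ and taking the maximum of $A$ and $B+1$ as threshold, so that the whole content is the claim you isolate, namely $\Tb_k \vdash \mathcal{I}^{\Unirm, \Modrm_0}_{\underline{k}}(\mathsf{Multiverse}_\ZF)$ for all $k \geq B+1$; and your unwinding of that claim through $(\dagger)$ --- folding the $\lang_{\Uni, \Mod}$-instances of Separation and Replacement into $\lang$-instances inside $\mathcal{U}$, because $\Uni$ and $\Mod$ are interpreted by codes of $\lang$-formulas --- is exactly what the paper compresses into the words ``by the definition of $\mathcal{I}^{\Unirm, \Modrm_0}$ and ($\dagger$)''. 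Both proofs therefore stand or fall with the single statement $\Tb_k \vdash \forall \mathcal{U} \in \Unib_{\underline{k}} \dt (\mathcal{U} \models \udot{\ZF})$ for $k \geq B+1$.

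On that statement, the point you flag as delicate is genuinely the crux, and your instinct about it is right, but your proposed repair is not available from the stated hypotheses. The paper handles the statement by bare assertion (``Since $\Trm$, $\Unirm$ and $\Modrm_0$ are revision parameters, we have \dots''), with no appeal to $\NEC^*$ or $\textnormal{\sf Soundness}^*$. Your route cannot be carried out from the Corollary's hypotheses: Lemma \ref{Lemma: Admissible rules on revision parameters} gives only the implication $\textnormal{\sf Soundness}^* \Rightarrow \NEC^*$, never the converse, so admitting $\NEC^*$ puts no internal soundness of the classes $\Unib_{\underline{k}}$ at your disposal (and clause (b) assumes only $\CONEC^*$, where nothing of the sort is present at all); moreover, the meta-theoretic hypothesis $\Tb_B \vdash \ZF$ yields, inside $\Tb_k$, only the provability assertions $\Pr_{\udot{\Tb}_B}(\gquote{\phi})$ for each \emph{standard} axiom $\phi$, not the internal schema $\forall \sigma \in \udot{\ZF} \dt \Pr_{\udot{\Tb}_B}(\sigma)$, which depends on how the theories are represented. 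What actually makes the key statement provable in every application of the Corollary (Theorems \ref{Thm: Con^omega interprets NEC + Non-triv}, \ref{Thm: R^omega interprets T} and \ref{Thm: CM + NEC + self-iso}) is the concrete choice of parameters: there $\Unib_{n+1}$ is defined outright as a class of models of $\Tb_n$, whose internal axiom set contains $\udot{\ZF}$, so the statement holds essentially by definition --- the soundness is, exactly as you suspected, ``added by hand''. So your write-up is no less rigorous than the paper's own proof; but be aware that neither your soundness step nor the paper's one-liner follows from Definition \ref{dfn: appropriate} together with rule-admittance alone, and an airtight version of the Corollary would either take the displayed key statement as an explicit hypothesis or strengthen the notion of revision parameters so as to include it.
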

\begin{proof}
Let $B \in \mathbb{N}$, such that $\Tb_B \vdash \ZF$. Since $\Trm$, $\Unirm$ and $\Modrm_0$ are revision parameters, we have 
\[
\forall k \in \mathbb{N} \dt [B+1 \leq k \rightarrow \Tb_k \vdash \forall \mathcal{U} \in \Unib_{\underline{k}} \dt (\mathcal{U} \models \udot{\ZF})].
\]
So by the definition of $\mathcal{I}^{\Unirm, \Modrm_0}$ and ($\dagger$) in Construction \ref{Constr: Multiverse semantics},
\[
\forall k \in \mathbb{N} \dt [B+1 \leq k \rightarrow \Tb_k \vdash \mathcal{I}_{\underline{k}}^{\Unirm, \Modrm_0}(\mathsf{Multiverse}_\ZF)].
\]
Applying the Main Lemma with $S + \mathsf{Multiverse}_\ZF$ for $S$, and the maximum of $A$ and $B+1$ for $A$, we obtain the desired result.
\end{proof}

The following systems, along with $\GR^\alpha$ from Definition \ref{Sys: GR omega}, are useful for measuring the consistency strength of various extensions of $\CM$.

\begin{sys}\label{Sys: Reflection}
Let $S$ be a set-theoretic system. For any set theory $T$ in language $L$, $\R_T$ is the so called {\em (proof-theoretic) Reflection schema}:
\[
\begin{array}{ll}
\RTh_{\udot T} & \{ {\Prv_{\udot T}}(\gquote{\phi}) \rightarrow \phi \mid \phi \in L\}.
\end{array}
\]
(The dot under $T$ is sometimes omitted, when it is clear from the context.) 

We recursively define, for recursive ordinals\footnote{An ordinal $\alpha$ is {\em recursive} if there is a $\Sigma_1^0$-formula defining a well-ordering of a subset of $\mathbb{N}$ of order-type $\alpha$. These are precisely the ordinals below $\omega_1^\mathrm{CK}$.} $\alpha$, the theories $\ConTh^\alpha(S)$ and $\RTh^\alpha(S)$, of {\em $\alpha$-iterated Consistency over $S$} and {\em $\alpha$-iterated Reflection schema over $S$}, respectively:
\begin{align*}
\ConTh^0(S) &=_\df S  \\
\ConTh^{\alpha+1}(S) &=_\df \ConTh^\alpha(S) + \Con_{\ConTh^\alpha(S)}  \\
\ConTh^\alpha(S) &=_\df \bigcup_{\xi < \alpha} \ConTh^\xi(S) \text{, for $\alpha$ a limit ordinal}; \\
\RTh^0(S) &=_\df S \\
\RTh^{\alpha+1}(S) &=_\df \RTh^\alpha(S) + \R_{\RTh^\alpha(S)} \\
\RTh^\alpha(S) &=_\df \bigcup_{\xi < \alpha} \RTh^\xi(S) \text{, for $\alpha$ a limit ordinal.}
\end{align*}
We use the notations $\ConTh^\alpha$ and $\RTh^\alpha$ for $\ConTh^\alpha(\ZF)$ and $\RTh^\alpha(\ZF)$, respectively. 
\end{sys}

Recall System \ref{Sys: GR omega}, where $\GR^\alpha$ is defined, using the axiom $\GR_T$ of Global Reflection over a set theory $T$ extending $\CT\hspace{-3pt}\restriction$ (in some language $L_\Sat$ with a satisfaction predicate):
\[
\forall \phi \in \udot{L}_\Sat \dt (\Prv_{\udot T}(\phi) \rightarrow \Tr(\phi))
\]
Comparing $\RTh_T$ with $\GR_T$, note that $\RTh_T$ is a schema, with a separate axiom for each formula of the form $\phi$ in the meta-language, while $\GR_T$ quantifies internally over all formulas in the object-language; the latter is made possible by the satisfaction/truth-predicate.

\begin{rem*} Let us pause to measure the consistency strengths of $\GR^\omega$, $\mathsf{R}^\omega$ and $\mathsf{Con}^\omega$: The consistency strength of $\GR^\omega$ is bounded by that of $\mathsf{MK} + \mathsf{GC}$ (Morse-Kelley class theory with Global Choice),\footnote{The following argument indicates that the consistency strength of $\GR^\omega$ is far less than that of $\mathsf{MK} + \mathsf{GC}$. We rely on \cite{Fuj12}: Fujimoto shows in his Theorem 70 that the consistency strength of his theory of truth, $\FS$, is equal that of $\mathsf{NBG}_\omega$, which is a subtheory of $\mathsf{NBG}_{< E_0}$ introduced in \cite{JK10}. By Theorem 15 in (ibid.), and by Fujimoto's Proposition 4, $\mathsf{NBG}_{< E_0}$ is a subsystem of $\mathsf{MK} + \mathsf{GC}$. (All of the relevant definitions are found in Fujimoto's \S 3.1.) Moreover, the proof of Fujimoto's Proposition 21 provides the base step, and $\NEC$ provides the induction step, to show that his $\FS$ proves the version of $\GR^\omega$ for truth (even with the Replacement schema extended to the language with the truth predicate). Since that version of $\GR^\omega$ interprets our $\GR^\omega$, the consistency strength of our $\GR^\omega$ is bounded by the consistency strength of $\mathsf{NBG}_\omega$ (and since our $\GR^\omega$ does not have the Replacement schema extended to the language with the satisfaction relation, this bound is probably not tight), which in turn is bounded by the consistency strength of $\mathsf{MK} + \mathsf{GC}$.} which, in turn, is far less than that of $\ZFC + \text{``there exists an inaccessible cardinal''}$.\footnote{If $\kappa$ is an inaccessible cardinal, then $V_{\kappa+1}$ provides a natural model of $\mathsf{MK} + \mathsf{GC}$.} The consistency strength of $\mathsf{R}^\omega$ is bounded by that of $\GR^1$.\footnote{This is shown by a routine argument, using Global Reflection to prove each iteration of the Reflection schema.} The consistency strength of $\mathsf{Con}^\omega$ is bounded by that of $\mathsf{R}^1$.\footnote{This is shown by a routine argument, using the Reflection schema to prove each iteration of Consistency.} Moreover, it is easily observed that for each $n \in \mathbb{N}$: $\GR^{n+1}$, $\mathsf{R}^{n+1}$ and $\mathsf{Con}^{n+1}$ proves the consistency of $\GR^{n}$, $\mathsf{R}^{n}$ and $\mathsf{Con}^{n}$, respectively.
\end{rem*}

\begin{thm}\label{Thm: Con^omega interprets NEC + Non-triv}
$\ConTh^\omega$ locally interprets $\CM + \NEC + \textnormal{\sf Non-Triviality}$.
\end{thm}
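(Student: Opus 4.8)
The plan is to instantiate Corollary \ref{Cor: T interprets MS}(a) with revision parameters whose theory hierarchy is exactly iterated consistency over $\ZF$. I would set $\Trm_n(\phi)$ to be the formula expressing ``$\phi$ belongs to the $n$-th iterated consistency extension of $\ZF$'' (uniformly definable in $n$ via the recursion underlying System \ref{Sys: Reflection}), so that $\Tb_n = \ConTh^n$ and hence $\Tb_\omega = \ConTh^\omega$. Take $\lang^\Trm = \lang^\Rev = \lang$, and let $\Modrm_0$ be any formula meeting condition (7) of Definition \ref{dfn: appropriate}; for definiteness I would use the fixed-point relation $\Modrm^\circlearrowright$ of Construction \ref{Constr: Multiverse semantics}, whose leading conjunct is precisely $\Unirm_0(\mathcal{U}) \wedge \phi \in \lang^\Rev_{\Uni, \Mod} \wedge f \in \VA^{\mathcal{U}}$. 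Its exact choice is immaterial here, since interpretability of $\textnormal{\sf Non-Triviality}$ depends only on the $\Unirm$ parameter and the compositional axioms are handled by the revision step at stages $k \geq 1$.

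The only delicate design decision is the choice of $\Unirm$. The tension is that $\textnormal{\sf Soundness}^*$ demands that every model in $\Unib_{n+1}$ satisfy $\Tb_n = \ConTh^n$, whereas interpreting $\textnormal{\sf Non-Triviality}$ at stage $k$ requires $\Tb_k = \ConTh^k$ to prove the \emph{existence} of a $\Unib_k$-model, which by G\"odel's second incompleteness theorem is impossible if $\Unib_k$ consists of models of $\ConTh^k$ itself. Both demands are met by an offset of one: let $\Unirm_n(\mathcal{U})$ express that $\mathcal{U}$ is a model of $\ConTh^{\max(n-1,\,0)}$, so that $\Unib_0 = \Unib_1 = \{\mathcal{U} \mid \mathcal{U} \models \ZF\}$ and $\Unib_n = \{\mathcal{U} \mid \mathcal{U} \models \ConTh^{n-1}\}$ for $n \geq 1$. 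Then $\Unib_{n+1} \subseteq \Unib_n$ (condition (6)) because $\ConTh^n \supseteq \ConTh^{n-1}$, while conditions (1)--(5) and (7) of Definition \ref{dfn: appropriate} are routine; hence these are revision parameters.

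Next I would verify the hypotheses of Corollary \ref{Cor: T interprets MS}(a). First, $\Tb_0 = \ConTh^0 = \ZF \vdash \ZF$, so ``$\Tb_B \vdash \ZF$ for some $B$'' holds with $B = 0$. Second, $\textnormal{\sf Soundness}^*$ holds trivially: a model in $\Unib_{n+1} = \{\mathcal{U} \mid \mathcal{U} \models \ConTh^n\}$ satisfies $\Tb_n = \ConTh^n$ by definition, so by Lemma \ref{Lemma: Admissible rules on revision parameters}(a) the parameters admit $\NEC^*$. Third comes the local-interpretability premise for $S = \textnormal{\sf Non-Triviality}$. Since $\mathcal{I}_{\underline k}$ interprets $\Uni$ by $\Unirm_{\underline k}$, we have $\mathcal{I}_{\underline k}^{\Unirm, \Modrm_0}(\exists \mathcal{U} \dt \Uni(\mathcal{U})) \equiv \exists \mathcal{U} \dt \Unirm_{\underline k}(\mathcal{U})$, which for $k \geq 1$ asserts the existence of a model of $\ConTh^{k-1}$. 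By the formalized completeness theorem (available in $\ZF \subseteq \ConTh^k$, as recorded in \S\ref{subsection: Misc prel}) this is equivalent over $\Tb_k$ to $\Con_{\ConTh^{k-1}}$, which $\ConTh^k = \ConTh^{k-1} + \Con_{\ConTh^{k-1}}$ proves outright. Thus with $A = 1$ we obtain $\Tb_k \vdash \mathcal{I}_{\underline k}^{\Unirm, \Modrm_0}(\textnormal{\sf Non-Triviality})$ for every $k \geq 1$, as required.

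With all hypotheses in place, Corollary \ref{Cor: T interprets MS}(a) yields that $\Tb_\omega = \ConTh^\omega$ locally interprets $\CM + \NEC + \textnormal{\sf Non-Triviality}$ by $\mathfrak{F} = \{\mathcal{I}_{\underline k} \mid k \in \mathbb{N}\}$, which is exactly the claim. The one genuinely substantive point is the offset in $\Unirm$, which reconciles $\textnormal{\sf Soundness}^*$ with the interpretation of $\textnormal{\sf Non-Triviality}$ across the incompleteness barrier; everything else reduces to checking that the iterated-consistency hierarchy satisfies the bookkeeping conditions of Definition \ref{dfn: appropriate}.
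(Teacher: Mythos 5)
Your proposal is correct and takes essentially the same route as the paper's own proof: the same hierarchy $\Tb_n = \ConTh^n$, the same one-step offset $\Unib_{n+1} = \{\mathcal{U} \mid \mathcal{U} \models \ConTh^n\}$ that yields $\textnormal{\sf Soundness}^*$ (hence $\NEC^*$ via Lemma \ref{Lemma: Admissible rules on revision parameters}) while letting $\ConTh^{k+1}$ prove non-emptiness of $\Unib_{k+1}$ through formalized completeness, and the same final appeal to Corollary \ref{Cor: T interprets MS} with $S = \{\textnormal{\sf Non-Triviality}\}$. The only differences are presentational: you make explicit the choices of $\Modrm_0$ and of $\Unib_0$, which the paper leaves implicit.
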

\begin{proof}
We can choose revision parameters $\Trm, \Unirm, \Modrm_0$, such that for each $n \in \mathbb{N}$:
\begin{align*}
\Tb_n &= \ConTh^n \\
\Unib_{n+1} &= \{ \mathcal{U} \mid \mathcal{U} \models \Tb_n \}
\end{align*}
Clearly, these are revision parameters provably satisfying $\textnormal{\sf Soundness}^*$, and thereby admitting $\NEC^*$. Moreover, we have for each $k \in \mathbb{N}$:
\begin{align*}
\Tb_{k+1} &\vdash \Unib_{\underline{k+1}} \neq \varnothing & & \text{Definition of } \Tb_{k+1},  \Unib_{k+1} \\
\Tb_{k+1} &\vdash \mathcal{I}^{\Unirm, \Modrm_0}_{\underline{k+1}}(\textnormal{\sf Non-Triviality}) & & \text{Definition of } \mathcal{I}^{\Unirm, \Modrm_0}_{\underline{k+1}}
\end{align*}
So the result follows from Corollary \ref{Cor: T interprets MS}, by setting $S = \{\textnormal{\sf Non-Triviality}\}$.
\end{proof}

\begin{rem*}
Under the mild meta-theoretic assumption that each $\ConTh^n$ is closed under the {\sf Reflection rule}, it follows from Lemma \ref{Lemma: Admissible rules on revision parameters} that the revision parameters in the above proof admit $\CONEC^*$, yielding that $\ConTh^\omega$ locally interprets $\CM + \NEC + \textnormal{\sf Non-Triviality}$. This meta-theoretic assumption follows from the assumption that $\ConTh^\omega$ is $\omega$-consistent, which in turn follows from the existence of an $\omega$-standard model of $\ZF$.
\end{rem*}
\begin{rem*}
Using the fine-grained result obtained at the end of the proof of the Main Lemma, we can show by an overspill-argument that $\ConTh^\omega + \{\underline{n} < c \mid n \in \mathbb{N}\}$ (for a fresh constant $c$) interprets $\CM + \NEC + \textnormal{\sf Non-Triviality}$ (not just locally). This raises:
\end{rem*}
\begin{que*}
Is $\CM + \NEC + \textnormal{\sf Non-Triviality}$ $\omega$-inconsistent?
\end{que*}

\begin{thm}\label{Thm: R^omega interprets T}
$\RTh^\omega$ locally interprets $\MS + \textnormal{\sf Multiverse Reflection}$.
\end{thm}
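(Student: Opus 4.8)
The plan is to apply clause (c) of Corollary \ref{Cor: T interprets MS} with $S$ the set of all instances of $\textnormal{\sf Multiverse Reflection}$, choosing revision parameters whose associated theories are exactly the iterated reflection schemata. Concretely, mirroring the choice made in the proof of Theorem \ref{Thm: Con^omega interprets NEC + Non-triv}, I would fix revision parameters $\Trm, \Unirm, \Modrm_0$ so that for each $n \in \mathbb{N}$,
\[
\Tb_n = \RTh^n \qquad\text{and}\qquad \Unib_{n+1} = \{\mathcal{U} \mid \mathcal{U} \models \RTh^n\},
\]
with $\Unib_0$ taken to be all $\lang$-structures, $\lang^\Trm = \lang^\Rev = \lang$, and $\Modrm_0$ any default satisfying condition (7) of Definition \ref{dfn: appropriate}. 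Since $\RTh^{n+1} \supseteq \RTh^n$, the monotonicity conditions (4) and (6) hold, while $\Tb_0 = \ZF$ gives $\Tb_B \vdash \ZF$ with $B = 0$; thus these are genuine revision parameters and the base hypothesis of the corollary is met. Note $\Tb_\omega = \RTh^\omega$.

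Next I would verify that the parameters admit both $\NEC^*$ and $\CONEC^*$, so that clause (c) applies. Because $\Unib_{n+1}$ is by definition the class of all models of $\Tb_n$, the condition $\textnormal{\sf Soundness}^*$ holds trivially, and Lemma \ref{Lemma: Admissible rules on revision parameters}(a) yields $\NEC^*$. For $\CONEC^*$ I would use Lemma \ref{Lemma: Admissible rules on revision parameters}(b): $\textnormal{\sf Completeness}^*$ holds since, if an $\lang$-sentence $\phi$ is true in every model of $\RTh^n$, then $\RTh^n \vdash \phi$ by the formalized G\"odel completeness theorem; and $\textnormal{\sf Reflection rule}^*$ holds because the reflection schema is built into the next theory: if $\Tb_{n+1} = \RTh^{n+1} \vdash \Prv_{\RTh^n}(\gquote\phi)$, then since $\RTh^{n+1}$ contains the instance $\Prv_{\RTh^n}(\gquote\phi) \rightarrow \phi$ of $\R_{\RTh^n}$, we get $\RTh^{n+1} \vdash \phi$, hence $\Tb_{n+2} \vdash \phi$. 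This is exactly where $\RTh$ behaves better than the consistency hierarchy $\ConTh$: for $\ConTh$ the $\textnormal{\sf Reflection rule}^*$ required an extra $\omega$-consistency assumption, whereas here it is immediate.

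The main step is to verify the hypothesis of Corollary \ref{Cor: T interprets MS} on $S$: for each instance $\Tr^\Box(\gquote\sigma) \rightarrow \sigma$ of $\textnormal{\sf Multiverse Reflection}$ (with $\sigma \in \sent(\lang)$), I claim that $\RTh^k \vdash \mathcal{I}_{\underline{k}}(\Tr^\Box(\gquote\sigma) \rightarrow \sigma)$ for every $k \geq 1$, so that $A = 1$ witnesses the requirement for any finite $\Gamma \subseteq S$. Unwinding the interpretation, $\mathcal{I}_{\underline{k}}(\Tr^\Box(\gquote\sigma))$ is $\forall \mathcal{U} \dt (\Unirm_k(\mathcal{U}) \rightarrow \Modrm_k(\mathcal{U}, \gquote\sigma))$. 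Since $\sigma$ is a standard $\lang$-sentence, the defining equivalence $(\dagger)$ of Construction \ref{Constr: Multiverse semantics}, together with absoluteness of $\models$ for standard formulas, gives (provably in $\RTh^k$) that $\Modrm_k(\mathcal{U}, \gquote\sigma)$ is equivalent to $\Unirm_k(\mathcal{U}) \wedge \mathcal{U} \models \sigma$; hence the interpreted antecedent asserts that every model of $\RTh^{k-1}$ satisfies $\sigma$. By the formalized completeness theorem (available in $\ZF$, hence in $\RTh^k$) this is equivalent to $\Prv_{\RTh^{k-1}}(\gquote\sigma)$. Therefore $\mathcal{I}_{\underline{k}}(\Tr^\Box(\gquote\sigma) \rightarrow \sigma)$ is provably equivalent over $\RTh^k$ to the reflection instance $\Prv_{\RTh^{k-1}}(\gquote\sigma) \rightarrow \sigma$, which belongs to $\R_{\RTh^{k-1}} \subseteq \RTh^k$; this establishes the claim.

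With all hypotheses discharged, Corollary \ref{Cor: T interprets MS}(c) applied with $S = \textnormal{\sf Multiverse Reflection}$ yields that $\Tb_\omega = \RTh^\omega$ locally interprets $\MS + \textnormal{\sf Multiverse Reflection}$ by $\mathfrak{F} = \{\mathcal{I}_{\underline{k}} \mid k \in \mathbb{N}\}$, as desired. The one genuinely delicate point I expect to be the main obstacle is the reduction in the previous paragraph: one must confirm that, under the revision interpretation, the box modality $\Tr^\Box$ over a standard $\lang$-sentence collapses to $\RTh^{k-1}$-provability via the formalized completeness theorem, so that each interpreted instance of $\textnormal{\sf Multiverse Reflection}$ becomes precisely the reflection-schema instance that $\RTh^k$ was built to prove.
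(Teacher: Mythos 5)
Your proposal is correct and follows essentially the same route as the paper's own proof: the same choice of revision parameters ($\Tb_n = \RTh^n$, $\Unib_{n+1}$ the models of $\RTh^n$), the same verification of $\NEC^*$ and $\CONEC^*$ via $\textnormal{\sf Soundness}^*$, $\textnormal{\sf Completeness}^*$ and the $\textnormal{\sf Reflection rule}^*$, the same collapse of each interpreted instance of $\textnormal{\sf Multiverse Reflection}$ to a reflection-schema instance via the formalized completeness theorem, and the same final appeal to Corollary \ref{Cor: T interprets MS}(c). Your write-up merely fills in details the paper leaves as "easily seen," and your key reduction is exactly the two-line derivation the paper records.
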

\begin{proof}
We can choose revision parameters $\Trm, \Unirm, \Modrm_0$, such that for each $n \in \mathbb{N}$:
\begin{align*}
\Tb_n &= \RTh^n \\
\Unib_{n+1} &= \{ \mathcal{U} \mid \mathcal{U} \models \Tb_n \}
\end{align*}
It is easily seen that these are revision parameters provably satisfying $\textnormal{\sf Soundness}^*$, and thereby admitting $\NEC^*$. Similarly, it is easily seen that they admit the $\textnormal{\sf Reflection rule}^*$ and satisfy $\textnormal{\sf Completeness}^*$, so that they admit $\CONEC^*$. Moreover, we have for each $k \in \mathbb{N}$ and each $\phi \in \lang$:
\begin{align*}
\Tb_{k+1} &\vdash \big( \forall \mathcal{U} \in \Unib_{\underline{k+1}} \dt \Modrm_{\underline{k+1}}(\mathcal{U}, \gquote{\phi}) \big) \rightarrow \phi & & \text{The completeness theorem and} \\
& & & \text{the definition of } \Tb_{k+1},  \Unib_{k+1} \\
\Tb_{k+1} &\vdash \mathcal{I}^{\Unirm, \Modrm_0}_{\underline{k+1}}(\Tr^\Box(\gquote{\phi}) \rightarrow \phi) & & \text{Definition of } \mathcal{I}^{\Unirm, \Modrm_0}_{\underline{k+1}}
\end{align*}
So the result follows from Corollary \ref{Cor: T interprets MS}, by setting $S ={\textnormal{\sf Multiverse Reflection}}$.
\end{proof}

\begin{rem*}
The technique for obtaining full (not just local) interpretability, mentioned in the second remark following Theorem \ref{Thm: Con^omega interprets NEC + Non-triv}, does not work for the above theorem, because {\sf Multiverse Reflection} is not finitely axiomatizable (as far as the authors can see). The overspill-argument must be carried out on a single formula, not a schema.
\end{rem*}

\begin{thm}\label{Thm: multiverse conservativity}
The following conservativity results hold:
\begin{enumerate}[{\rm (a)}]
\item $\mathsf{Comp}_\CM \equiv_\lang \ZF$
\item $\CM + \NEC + \textnormal{\sf Non-Triviality} \equiv_\lang \ConTh^\omega$ 
\item $\MS + \textnormal{\sf Multiverse Reflection} \equiv_\lang \CM + \NEC + \textnormal{\sf Multiverse Reflection} \equiv_\lang \RTh^\omega$
\end{enumerate}
\end{thm}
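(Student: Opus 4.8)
The plan is to establish each $\equiv_\lang$ by proving two inclusions, obtaining the ``multiverse-theory $\subseteq$ reflection-theory'' halves directly from the local interpretations already in hand. In every case the relevant interpretation — $\mathcal{C}$ in Theorem~\ref{Thm: GL interpretation}(c), and the families $\mathfrak{F}=\{\mathcal{I}_{\underline k}\}$ in Theorems~\ref{Thm: Con^omega interprets NEC + Non-triv} and~\ref{Thm: R^omega interprets T} — reinterprets only $\Uni$ and $\Mod$ and hence fixes every $\lang$-formula. So if the multiverse theory proves an $\lang$-sentence $\sigma$, applying the (local) interpretation to the singleton $\{\sigma\}$ returns a proof of $\sigma$ itself in the target theory. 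This yields $\mathsf{Comp}_\CM\cap\lang\subseteq\ZF$, $(\CM+\NEC+\textnormal{\sf Non-Triviality})\cap\lang\subseteq\ConTh^\omega$, and $(\MS+\textnormal{\sf Multiverse Reflection})\cap\lang\subseteq\RTh^\omega$; since $\CM+\NEC+\textnormal{\sf Multiverse Reflection}$ is a subtheory of $\MS+\textnormal{\sf Multiverse Reflection}$, its $\lang$-theorems lie in $\RTh^\omega$ as well.

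For (a) the converse is immediate, as $\mathsf{Comp}_\CM$ extends $\ZF$. For (b) I would prove $\ConTh^\omega\cap\lang\subseteq(\CM+\NEC+\textnormal{\sf Non-Triviality})\cap\lang$ by external induction on $n$, showing simultaneously that the theory proves ``every universe satisfies $\ConTh^n$'' and proves $\Con_{\ConTh^n}$. The base case combines $\mathsf{Multiverse}_\ZF$, $\textnormal{\sf Non-Triviality}$, and the formalised completeness theorem: a witnessing universe models $\ZF$, so $\Con_\ZF$ holds. For the step, since $\ConTh^{n+1}$ adjoins the single sentence $\Con_{\ConTh^n}$, applying $\NEC$ to the induction hypothesis $\vdash\Con_{\ConTh^n}$ gives that every universe satisfies $\Con_{\ConTh^n}$; together with ``every universe satisfies $\ConTh^n$'' this shows every universe models $\ConTh^{n+1}$, whence the completeness theorem delivers $\Con_{\ConTh^{n+1}}$. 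What makes this clean is that each iterate adjoins only one sentence, so a single application of $\NEC$ to a consistency statement suffices.

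For (c) I would arrange the theories as $\CM+\NEC+\textnormal{\sf Multiverse Reflection}\subseteq\MS+\textnormal{\sf Multiverse Reflection}$ and aim to close the loop by proving $\RTh^\omega\cap\lang\subseteq(\MS+\textnormal{\sf Multiverse Reflection})\cap\lang$ together with the $\lang$-conservativity of $\CONEC$ — that the two multiverse theories prove the same $\lang$-sentences — the latter being where Lemma~\ref{Lemma: Antons lemma} (the $\NEC$-before-$\CONEC$ normal form) is meant to be used. The engine for the first part is a multiverse global reflection principle $\forall\phi\,(\Prv_\ZF(\phi)\to\Tr^\Box(\phi))$, provable from $\mathsf{Multiverse}_\ZF$ and the Soundness Lemma~\ref{Lemma: CM soundness} (each universe models $\ZF$ and has deductively closed theory). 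Composing this with an instance $\Tr^\Box(\gquote{\sigma})\to\sigma$ of the schematic $\textnormal{\sf Multiverse Reflection}$ already proves every standard instance of the first reflection layer $\R_\ZF$, and then, via the standard reduction of provability in $\RTh^1=\ZF+\R_\ZF$ to provability in $\ZF$ modulo those instances, every $\lang$-theorem of $\RTh^1$.

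The main obstacle is passing from one reflection layer to the next. To reflect an internally coded — hence possibly nonstandard — $\RTh^{n+1}$-proof, the naive argument needs each universe to satisfy the entire schema $\R_{\RTh^n}$, including its nonstandard instances, which amounts to uniform reflection inside the universes. In the truth-theoretic analogues $\GR^\omega$ and $\FS$ this is exactly what uniform disquotation $\Tr(\gquote{\phi})\leftrightarrow\phi$ supplies; but here the corresponding unrestricted principle is precisely $\mathsf{T}_\CM$, shown inconsistent over $\CM^-+\NEC$ in Theorem~\ref{Thm: liar}, so only its $\lang$-restricted schematic form is safe. I therefore expect the real argument to avoid uniform reflection in the universes entirely, instead bootstrapping each finite reflection layer through finitely many controlled applications of $\NEC$ and $\CONEC$ to provable sentences — staying inside the fragment where the liar is not triggered — and invoking the normal-form lemma to defer the $\CONEC$-applications so that $\lang$-conclusions survive. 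Making this finite-stage iteration of reflection go through without the forbidden uniform biconditional is the delicate heart of the proof.
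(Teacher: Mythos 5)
Your parts (a) and (b) are correct and essentially identical to the paper's proof. In both cases one inclusion is exactly as you say: the interpretations of Theorem \ref{Thm: GL interpretation}(c) and Theorem \ref{Thm: Con^omega interprets NEC + Non-triv} fix every $\lang$-formula, so applying them to a single $\lang$-theorem returns that theorem in $\ZF$, resp.\ $\ConTh^\omega$. Your induction for (b) is the paper's argument: $\NEC$ applied to the single new consistency sentence at each stage, $\mathsf{Multiverse}_\ZF$ for the $\ZF$ part, and then $\textnormal{\sf Non-Triviality}$ to obtain a universe whose $\Mod$-theory contains $\ConTh^n$, is deductively closed and is consistent. (Two cosmetic points: the passage from ``some universe satisfies $\ConTh^{n+1}$'' to $\Con_{\ConTh^{n+1}}$ is soundness, not completeness; and since universes are given by the primitive relation $\Mod$ rather than $\models$, it is the Soundness Lemma \ref{Lemma: CM soundness}, i.e.\ deductive closure, that delivers it.)

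In part (c) there is a genuine gap. You reduce (c) to two claims: (i) $\RTh^\omega$ is contained in the $\lang$-theorems of $\MS + \textnormal{\sf Multiverse Reflection}$, and (ii) $\CONEC$ is $\lang$-conservative over $\CM + \NEC + \textnormal{\sf Multiverse Reflection}$, to come from Lemma \ref{Lemma: Antons lemma}. You prove neither. For (i) you carry out only the first layer $\RTh^1$ (that part is fine: multiverse global reflection over $\ZF$ plus the schematic $\textnormal{\sf Multiverse Reflection}$), and for all higher layers you explicitly defer the argument, conjecturing a bootstrap by ``finitely many controlled applications of $\NEC$ and $\CONEC$''; that deferred step is the entire mathematical content of the inclusion. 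Claim (ii) is likewise only announced. So neither $\equiv_\lang$ in (c) is actually established by the proposal.

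The paper also closes the loop differently, and more economically: neither $\CONEC$ nor Lemma \ref{Lemma: Antons lemma} appears in its proof of (c). It proves the stronger inclusion $\CM + \NEC + \textnormal{\sf Multiverse Reflection} \vdash \RTh^\omega$ outright; combined with the inclusion of the $\lang$-theorems of $\MS + \textnormal{\sf Multiverse Reflection}$ in $\RTh^\omega$ (Theorem \ref{Thm: R^omega interprets T}) and the trivial inclusion of $\CM + \NEC + \textnormal{\sf Multiverse Reflection}$ in $\MS + \textnormal{\sf Multiverse Reflection}$, the cycle identifies all three sets of $\lang$-theorems, and the $\CONEC$-conservativity you planned to prove separately falls out for free. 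The inclusion you found difficult is proved by a meta-induction establishing, for each $n$, the single sentence $\Tr^\Box(\udot{\RTh}^{\underline{n}})$ in $\CM + \NEC + \textnormal{\sf Multiverse Reflection}$; the schema $\textnormal{\sf Multiverse Reflection}$ then deflates this to $\RTh^n$ in the background. In the induction step one applies $\NEC$ to the induction hypothesis (which is a single sentence, precisely so that $\NEC$ can be applied to it), to the axioms of $\CM$ (finitely many applications suffice thanks to $\mathsf{Multiverse}_\ZF$), and to the $\textnormal{\sf Multiverse Reflection}$ instance for the sentence $\sigma$ at hand; then, working inside an arbitrary universe $\mathcal{U}$, one applies the Soundness Lemma \emph{in} $\mathcal{U}$: from $\Mod(\mathcal{U},\gquote{\Pr_{\udot{\RTh}^{\underline{k}}}(\gquote{\sigma})})$ together with $\mathcal{U}$'s satisfaction of $\CM$ and of $\Tr^\Box(\udot{\RTh}^{\underline{k}})$ one obtains $\Mod(\mathcal{U},\gquote{\Tr^\Box(\gquote{\sigma})})$, and the image under $\NEC$ of the reflection instance yields $\Mod(\mathcal{U},\gquote{\sigma})$. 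Note that your diagnosis that this ``amounts to uniform reflection inside the universes'', i.e.\ the inconsistent $\mathsf{T}_\CM$ of Theorem \ref{Thm: liar}, overshoots: what the argument uses inside $\mathcal{U}$ is only the $\NEC$-image of one induction-hypothesis sentence and of $\lang$-restricted $\textnormal{\sf Multiverse Reflection}$ instances for standard $\sigma$, which is consistent. This overestimate of the obstacle is what steered you toward the unexecuted $\CONEC$ detour instead of the direct induction; your underlying worry about internally coded, nonstandard instances of the reflection schema does identify the delicate point, but the paper's device for it is to keep the induction statement a single internally quantified sentence rather than a schema of standard instances.
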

\begin{proof}
\begin{enumerate}[{\rm (a)}]
\item This is immediate from Theorem \ref{Thm: GL interpretation}(\ref{Item: ZF interprets Comp}) and that the interpretation used in its proof restricts to the identity on $\lang$.
\item The right-to-left direction follows from Theorem \ref{Thm: Con^omega interprets NEC + Non-triv}, observing that the interpretation is the identity on $\lang$.
 
For the left-to-right direction, suppose as induction hypothesis that we have proved $\ConTh^n$ in $\CM +\NEC + \textnormal{\sf Non-Triviality}$. By $\NEC$, we have $\Tr^\Box(\udot{\ConTh^{\underline{n}}})$. So by {\sf Non-Triviality} and Lemma \ref{Lemma: CM soundness}, we can prove $\ConTh^n + \Con(\udot\ConTh^{\underline{n}})$, which is $\ConTh^{n+1}$, as desired.

\item That $\RTh^\omega$ proves every $\lang$-theorem of $\MS + \textnormal{\sf Multiverse Reflection}$ follows from Theorem \ref{Thm: R^omega interprets T}, observing that the interpretation is the identity on $\lang$. 

That $\MS + \textnormal{\sf Multiverse Reflection} \vdash \CM + \NEC + \textnormal{\sf Multiverse Reflection}$ is trivial.

For $\CM + \NEC + \textnormal{\sf Multiverse Reflection} \vdash \RTh^\omega$, we shall show that for each $n \in \mathbb{N}$, $\CM + \NEC + \textnormal{\sf Multiverse Reflection} \vdash \Tr^\Box(\udot{\RTh}^{\underline{n}})$. Then the result follows from $\textnormal{\sf Multiverse Reflection}$. We proceed by induction. By $\textnormal{\sf Multiverse}_\ZF$, we have the base case: $\MS + \textnormal{\sf Multiverse Reflection} \vdash \Tr^\Box(\udot{\RTh}^{\underline{0}}).$ So suppose as induction hypothesis that 
\[
\MS + \textnormal{\sf Multiverse Reflection} \vdash \Tr^\Box(\udot{\RTh}^{\underline{k}}).
\]
Let $\sigma \in \sent(\lang)$. By $\NEC$ we have
\begin{align}
\MS + \textnormal{\sf Multiverse Reflection} &\vdash \Tr^\Box(\gquote{ \Tr^\Box(\udot{\RTh}^{\underline{k}}) }) \label{Eq: 1} \\
\MS + \textnormal{\sf Multiverse Reflection} &\vdash \Tr^\Box(\udot{\CM}) \label{Eq: 2} \\
\MS + \textnormal{\sf Multiverse Reflection} &\vdash \Tr^\Box(\gquote{\Tr^\Box(\gquote{\sigma}) \rightarrow \sigma}), \label{Eq: 3}
\end{align}
since (by {\sf Multiverse}$_\ZF$) $\NEC$ only needs to be applied to finitely many axioms of $\CM$. 
We work in $\MS + \textnormal{\sf Multiverse Reflection}$. Let $\mathcal{U} \in \Uni$. Assume $\Mod(\mathcal{U}, \gquote{\Pr_{\udot{\RTh}^{\underline{k}}}(\gquote{\sigma})}).$ We shall show $\Mod(\mathcal{U}, \gquote{\sigma})$. By (\ref{Eq: 2}), we can apply the Soundness Lemma in $\mathcal{U}$ to obtain $\Mod(\mathcal{U}, \gquote{\Tr^\Box(\gquote{\sigma})})$ from (\ref{Eq: 1}) and $\Mod(\mathcal{U}, \gquote{\Pr_{\udot{\RTh}^{\underline{k}}}(\gquote{\sigma})})$. Now by $\CM_\rightarrow$ and (\ref{Eq: 3}), we obtain $\Mod(\mathcal{U}, \gquote{\sigma})$, as desired.
\qedhere
\end{enumerate}
\end{proof}

\begin{thm}\label{Thm: CM + NEC + self-iso}
$\GR^\omega$ interprets $\CM(\lang_{\iota, \self}) + \NEC + \textnormal{\sf Self-Perception}$. 
\end{thm}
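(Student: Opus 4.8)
The plan is to feed the self-perception hierarchy $\SP^\alpha$ into the revision-semantic machinery of the Main Lemma, using $\SP^n$ as the theories $\Tb_n$ and taking the universes of stage $n$ to be the countable recursively saturated models of $\SP^{n-1}$, and then to promote the resulting local interpretation over $\SP^\omega$ to a genuine interpretation in $\GR^\omega$ by exploiting the $\omega$-inconsistency of $\GR^\omega$. Concretely, I would fix revision parameters $\Trm, \Unirm, \Modrm_0$ with $\lang^\Trm = \lang_{\Sat, \iota, \self}$ and $\lang^\Rev = \lang_{\iota, \self}$ by setting $\Tb_n = \SP^n$ and $\Unib_n = \{\mathcal{U} \in \crsmb \mid \mathcal{U} \models \SP^{\underline{n-1}}\}$ for $n \geq 1$ (with $\Unib_0$ the class of crsm $\lang^\Trm$-structures modelling $\ZF$), taking $\Modrm_0$ to be any admissible base relation. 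The conditions of Definition \ref{dfn: appropriate} are routine, since $\Tb_{n+1} \vdash \Tb_n$ and $\Unib_{n+1} \subseteq \Unib_n$ both follow from $\SP^{n+1} \vdash \SP^n$. Because $\mathcal{U} \in \Unib_{n+1}$ means $\mathcal{U} \models \SP^n = \Tb_n$, the condition $\textnormal{\sf Soundness}^*$ holds outright, so by Lemma \ref{Lemma: Admissible rules on revision parameters}(a) the parameters admit $\NEC^*$; and $\SP^0 \vdash \ZF$ supplies the base requirement $\Tb_B \vdash \ZF$ of Corollary \ref{Cor: T interprets MS} with $B = 0$.

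Next I would verify the displayed hypothesis of Corollary \ref{Cor: T interprets MS} for $S = \textnormal{\sf Self-Perception}$, i.e.\ $\Iso(\self) + \Uni(\self)$, which is finitely axiomatised. The interpretation $\mathcal{I}_{\underline{k}}$ fixes $\Iso(\self)$ (it contains no $\Uni, \Mod$), and $\SP^k \vdash \Iso(\self)$ for $k \geq 1$; while $\mathcal{I}_{\underline{k}}(\Uni(\self))$ is $\Unirm_k(\self)$, which $\SP^k$ proves because $\SP^k$ asserts $\self \in \crsmb$ and $\self \models \mathbf{Tr} \cup \udot{\SP}^{\underline{k-1}}$, hence $\self \models \SP^{k-1}$, placing $\self$ in $\Unib_k$. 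Thus the condition holds with $A = 1$, and Corollary \ref{Cor: T interprets MS}(a) gives that $\SP^\omega = \Tb_\omega$ locally interprets $\CM(\lang_{\iota,\self}) + \NEC + \textnormal{\sf Self-Perception}$ by $\mathfrak{F} = \{\mathcal{I}_{\underline{k}}\}$.

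To upgrade local interpretability over $\SP^\omega$ to a genuine interpretation in $\GR^\omega$, I would invoke (the proof of) Lemma \ref{Lemma: GR^omega interprets SP^omega}: from the $\omega$-inconsistency of $\GR^\omega$ one extracts a definable nonstandard $c$ together with a definable model $\mathcal{S}'$ of $\SP^c$. I would then read off the multiverse interpretation at the single nonstandard stage $c$, transporting $\mathcal{I}_c$ through $\mathcal{S}'$. The fine-grained bound recorded at the end of the Main Lemma gives, for each theorem $\psi$ of the target with $N$ applications of $\NEC$ and none of $\CONEC$, that $\SP^{j} \vdash \mathcal{I}_{\underline{j}}(\psi)$ for all standard $j \geq N + 1 + A'$; an overspill argument then pushes this to the definable nonstandard $c$, yielding $\GR^\omega \vdash (\mathcal{S}' \models \mathcal{I}_c(\psi))$ uniformly in $\psi$. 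This step is available precisely because $\textnormal{\sf Self-Perception}$ is finitely axiomatised and only $\NEC$ is in play — exactly the configuration in which the overspill technique noted after Theorem \ref{Thm: Con^omega interprets NEC + Non-triv} succeeds, in contrast to its failure for the schema of Theorem \ref{Thm: R^omega interprets T}.

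The main obstacle I expect is this last upgrade: internalising the fine-grained Main Lemma bound with enough care that the overspill actually instantiates at the definable nonstandard $c$, and checking that the self-witness of $\SP^c$ remains a universe of the interpreted multiverse at the nonstandard index $c$, so that $\textnormal{\sf Self-Perception}$ — and its $\NEC$-consequence $\Tr^\Box(\gquote{\textnormal{\sf Self-Perception}})$ asserting that every universe again perceives itself — is validated. That the stage-$n$ universes, being crsm models of $\SP^{n-1}$, carry their own isomorphism witnesses (which is what lets $\Iso(\self)$ propagate under $\NEC$) rests on Lemmas \ref{Lemma: Internal model iso}, \ref{Lemma: GR internal model} and \ref{Lem: model of CT expands to Iso}, and ultimately on the canonicity Theorem \ref{Thm: rec sat iso}; this is the source of the reliance on full $\ZF$.
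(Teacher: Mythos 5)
Your first half coincides exactly with the paper's proof: the same revision parameters ($\Tb_n = \SP^n$, $\Unib_{n+1} = \{\mathcal{U} \mid \mathcal{U} \models \Tb_n \wedge \mathcal{U} \in \crsmb\}$, $\lang^\Trm = \lang_{\Sat,\iota,\self}$, $\lang^\Rev = \lang_{\iota,\self}$), the same verification of $\textnormal{\sf Soundness}^*$, the same observation that $\SP^k \vdash \Unirm_{\underline{k}}(\self)$, and the same appeal to Corollary \ref{Cor: T interprets MS} to get a \emph{local} interpretation in $\SP^\omega$. Where you genuinely diverge is the upgrade to a single interpretation in $\GR^\omega$. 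The paper composes the family $\mathcal{I}_{\underline{n}}$ with the interpretation $\mathcal{J}$ of Lemma \ref{Lemma: GR^omega interprets SP^omega}, replaces each occurrence of $\Tr(t)$ in the resulting translations by $t \in y$ so that Proposition \ref{Prop: T biconditionals} becomes applicable, internally enumerates the theorems of the target theory by a length-non-decreasing $f$, packages ``some single stage $j$ $\Sat$-validates the first $x$ theorems'' into one $\lang_\Sat$-formula $\theta(x,z)$, and only then overspills --- on $\theta(x,\mathbf{Tr})$, via $\uranus$ and Proposition \ref{Prop: CT induction} --- to obtain a definable stage $e$ and the interpretation $\mathcal{K}_\psi$. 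Your route is instead the alternative proof that the authors themselves flag in the remark immediately following the theorem: exploit the fine-grained bound at the end of the Main Lemma at the single definable nonstandard stage $c$, reading everything through the definable model $\mathcal{S}'$ of $\SP^c$; as you say, this is available because $S = \{\textnormal{\sf Self-Perception}\}$ is finitely axiomatized and only $\NEC$ is in play. The trade-off: your route avoids the $\Tr$-parameterization trick and the theorem-enumeration machinery entirely, while the paper's route avoids ever internalizing the Main Lemma (it uses only its meta-level consequences, one per standard index) and, unlike yours, survives for non-finitely-axiomatized $S$ (compare the negative remark after Theorem \ref{Thm: R^omega interprets T}).

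The step you must repair is the one you describe as ``an overspill argument then pushes this to the definable nonstandard $c$.'' Overspill cannot be applied to the family of meta-theoretic facts $\SP^j \vdash \mathcal{I}_{\underline{j}}(\psi)$, one for each standard $j$: inferring a statement at a nonstandard point from its provability at every standard numeral is precisely the inference that $\omega$-inconsistency blocks, and $\GR^\omega$ is $\omega$-inconsistent (take $\chi(x) :\equiv \neg\uranus(x)$ as the counterexample --- provable at every numeral, yet not assertable at an arbitrary nonstandard point). What this route actually requires is: (i) a uniform, internalized form of the fine-grained bound, $\ZF \vdash \forall x \dt \big(x \geq \underline{t_\psi} \rightarrow \Pr_{\udot{\SP}^x}(\udot{\mathcal{I}}_x(\gquote{\psi}))\big)$, obtained by re-running the Main Lemma's induction \emph{inside} $\ZF$ uniformly in the stage index (using that $\textnormal{\sf Soundness}^*$ is $\ZF$-provable, the internalized form of $\NEC^*$, and formalized $\Sigma^0_1$-completeness); (ii) plain universal instantiation at $x = c$, which is legitimate since $\GR^\omega \vdash \underline{n} < c$ for every standard $n$ --- note that no overspill happens here, the nonstandardness of $c$ being already secured by its $\uranus$-maximality definition in Lemma \ref{Lemma: GR^omega interprets SP^omega}; and (iii) the formalized soundness theorem applied to $\GR^\omega \vdash \mathcal{S}' \models \udot{\SP}^c$, converting internal provability into satisfaction in $\mathcal{S}'$, plus the (routine, via compositionality of internal satisfaction) check that $\psi \mapsto (\mathcal{S}' \models \udot{\mathcal{I}}_c(\gquote{\psi}))$ is generated by an interpretation in the paper's sense. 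You do flag (i) as the main obstacle, so you have located the real work correctly; but (i) is the entire content of this route --- it is exactly the burden the paper's different construction is engineered to avoid --- so as written the proposal states the key step as an expected difficulty rather than proving it.
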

\begin{proof}
Set $\lang^\Trm = \lang_{\Sat, \iota, \self}$ and set $\lang^\Rev$ to $\lang_{\iota, \self}$. We can choose revision parameters $\Trm, \Unirm, \Modrm_0$, such that for each $n \in \mathbb{N}$:
\begin{align*}
\Tb_n &= \SP^n \\
\Unib_{n+1} &= \{\mathcal{U} \mid \mathcal{U} \models \Tb_{n} \wedge \mathcal{U} \in \crsmb\}
\end{align*}
Clearly these are revision parameters satisfying $\textnormal{\sf Soundness}^*$, thus admitting $\NEC^*$. Moreover, note that for each $k < \omega$, $\Tb_k \vdash \Unirm_{\underline{k}}(\self)$. So for each $k < \omega$, 
\[
\Tb_k \vdash \mathcal{I}^{\Unirm, \Modrm_0}_{\underline{k}}(\textnormal{\sf Self-Perception}).
\]
Thus, it follows from Corollary \ref{Cor: T interprets MS} that $\mathfrak{F} = \{\mathcal{I}_{\underline{k}} \mid k < \omega\}$ is a local interpretation of $\CM(\lang_{\iota, \self}) + \NEC + \textnormal{\sf Self-Perception}$ in $\SP^\omega$.

Recall from Lemma \ref{Lemma: GR^omega interprets SP^omega} that there is an interpretation $\mathcal{J}$ of $\SP^\omega$ in $\GR^\omega$. For each $n < \omega$, let $\mathcal{K}_{\underline{n}} = \mathcal{J} \circ \mathcal{I}_{\underline{n}}$. Then $\mathfrak{G} = \{\mathcal{K}_{\underline{k}} \mid k < \omega\}$ is a local interpretation of $\CM(\lang_{\iota, \self}) + \NEC + \textnormal{\sf Self-Perception}$ in $\GR^\omega$. There is a technical hurdle later on in this proof caused by the fact that the image of each $\mathcal{K}_{\underline{n}}$ is not included in $\lang$, but includes the instance of $\Tr$ needed to define the set $\mathbf{Tr} = \{\sigma \in \sent(\lang_\Sat) \mid \Tr(\sigma)\}$ for the construction of $\mathcal{J}$ in the proof of Lemma \ref{Lemma: GR^omega interprets SP^omega}. To overcome this, we construct the functions $\mathcal{K}'_{\underline{n}}(\phi)$, for each $n < \omega$, replacing each occurrence of the form ``$\Tr(t)$'' in the values of $\mathcal{K}_{\underline{n}}$ by ``$t \in y$'', where $y$ is assumed to be fresh. Then we have for each $\phi$ that 
\[
\GR^\omega \vdash \mathcal{K}(\phi) \leftrightarrow (\mathcal{K}'(\phi))[\mathbf{Tr} / y].
\]

Let $f$ be a function enumerating the theorems of $\CM(\lang_{\iota, \self}) + \NEC + \textnormal{\sf Self-Perception}$, such that the length of the theorems is non-decreasing. Then, for each $i < \omega$ there is $\sigma$, such that $\GR^\omega \vdash \udot{f}(\underline{i}) = \gquote{\sigma}$. Let $\theta(x, z)$ be the formula expressing
\[
x < \omega \wedge \exists j < \omega \dt \forall i \leq x \dt \Sat \big( \udot{\mathcal{K}}'_{\underline{\underline{j}}}(f(i)), \gquote{y} \mapsto z \big).
\]
Since the values of the $\mathcal{K}'_{\underline{n}}$ are in $\lang$, we have by Proposition \ref{Prop: T biconditionals} that for each $j < \omega$ and each $\phi \in \lang^\Rev_{\Uni, \Mod}$:
\[
\GR^\omega \vdash \Sat(\udot{\mathcal{K}}'_{\underline{\underline{j}}}(\gquote{\phi}), \gquote{y} \mapsto z) \leftrightarrow \mathcal{K}'_{\underline{j}}(\phi)[z / y].
\]
So since $\mathfrak{G}$ is a local interpretation, we have for each $n < \omega$ that
\[
\GR^\omega \vdash \theta(\underline{n}, \mathbf{Tr}).
\]
Since $\GR^\omega$ is $\omega$-inconsistent, there is a formula $\uranus(x)$ such that $\GR^\omega \vdash \exists x < \omega \dt\uranus(x)$, but for each $n \in \mathbb{N}$, $\GR^\omega \vdash \neg\uranus(\underline{n})$. Working in $\GR^\omega$, employing Proposition \ref{Prop: CT induction}, we obtain a maximal number $d < \omega$ such that $\theta(d, \mathbf{Tr}) \wedge \neg\uranus(d)$. Now there is a minimal number $e < \omega$ such that
\[
\forall i \leq d \dt \Sat \big( \udot{\mathcal{K}}'_{\underline{\underline{e}}}(f(i)), \gquote{y} \mapsto \mathbf{Tr} \big).
\]
So $\GR^\omega$ defines $e$ by a formula $\psi$. It follows from Proposition \ref{Prop: T biconditionals} that $\mathcal{K}_\psi$ is an interpretation of $\CM(\lang_{\iota, \self}) + \NEC + \textnormal{\sf Self-Perception}$ in $\GR^\omega$.
\end{proof}

\begin{rem*}
Under the meta-theoretic assumption that the revision parameters used in the above proof admit the {\sf Reflection-rule*}, $\GR^\omega$ interprets $\MS(\lang_{\iota, \self}) + \textnormal{\sf Self-Perception}$. Another potential approach to validating $\CONEC$ would be to employ revision parameters based on a hierarchy of theories converging to $\FS\restr {} + \Sep(\lang_\Sat)$, rather than to $\GR^\omega$ (but the authors are not certain that this approach would work). 
\end{rem*}

\begin{que*}
Does $\FS\restr {} + \Sep(\lang_\Sat)$ interpret $\MS(\lang_{\iota, \self}) + \textnormal{\sf Self-Perception}$?
\end{que*}

\begin{rem*}
There is another proof of the above theorem that utilizes the fine-grained result at the end of the proof of the Main Lemma. That technique only works when the theory $S$ of the Main Lemma is finitely axiomatized (as here where $S = \{\textnormal{\sf Self-Perception}\}$). But the technique used in the above proof works also for non-finitely axiomatized $S$.
\end{rem*}

\begin{rem*} Note the contrast that $\CM$ includes $\ZF + \Sep(\lang_{\Uni, \Mod})  + \Rep(\lang_{\Uni, \Mod})$ while $\GR^\omega$ only includes $\ZF + \Sep(\lang_\Sat)$. The essential reason why $\GR^\omega$ interprets $\Rep(\lang_{\Uni, \Mod})$ is that the interpretations $\mathcal{I}_{\underline{n}}$ in the above proof map $\Mod$ and $\Uni$ to the $\lang$-formulas $\Modrm_{\underline{n}}$ and $\Unirm_{\underline{n}}$, respectively.
\end{rem*}

In light of Theorems \ref{Thm: multiverse conservativity} and \ref{Thm: CM + NEC + self-iso}, the authors ask:

\begin{que*}
Is it the case that $\GR^\omega \equiv_\lang \CM(\lang_{\iota, \self}) + \NEC + \textnormal{\sf Self-Perception}$? If not, what is the precise consistency strength of $\CM(\lang_{\iota, \self}) + \NEC + \textnormal{\sf Self-Perception}$?
\end{que*}

The fact that the extended schema $\Sep(\lang_\Sat)$ in $\GR^\omega$ was only used to obtain the set of true {\em sentences}, suggests a negative answer; $\GR^\omega$ may have strictly higher consistency strength than $\CM(\lang_{\iota, \self}) + \NEC + \textnormal{\sf Self-Perception}$.

\section{Case studies}

This section examines how the framework introduced above can be applied to two rather different conceptions of the set-theoretic multiverse.

\subsection{Multiverse conceptions of arithmetical absoluteness}

One may feel confident in adopting a universe view on arithmetic, appealing to the general acceptance of an intended model consisting of the finite ordinals, while having a multiverse view of set theory, where agreement on an intended model is lacking. This subsection therefore explores how the techniques of this paper may be applied to a conception of the multiverse where arithmetic is more or less fixed throughout the universes.

Let $\lang^\PA$ be the language of arithmetic, and let $\Sigma_n^\PA$ be the usual complexity hierarchy of arithmetic formulas over $\PA$. Given $\phi \in \lang^\PA$, $\phi^\mathbb{N}$ denotes the corresponding $\lang$-formula obtained by restricting the quantifiers to $\mathbb{N}$. Figure \ref{Fig: Arithmetic Absoluteness} exhibits axioms, of increasing strength, expressing the absoluteness of arithmetic. The strongest of these is {\sf Arithmetic Absoluteness}, which expresses that the bounded quantifier ``$\forall x \in \mathbb{N}$'' commutes with the $\Mod$-relation, for the untyped language $\lang_{\Uni, \Mod}$.

\begin{figure}
\caption{Axioms of Arithmetic Absoluteness}
\label{Fig: Arithmetic Absoluteness}

\vspace{6pt}
\begin{center} 
{\bf Axioms of Arithmetic Absoluteness}
\end{center}
\[
\begin{array}{ll}
\textnormal{\sf $\Sigma^\PA_1$-Absoluteness} & \forall \sigma \in \sent(\Sigma_1^\PA):\dt \forall \mathcal{U} \in \Uni\dt \big( \Mod(\mathcal{U}, \gquote{\sigma^\mathbb{N}}) \leftrightarrow \sigma^\mathbb{N} \big) \\
\textnormal{\sf Arithmetic Absoluteness} & \forall \sigma \in \sent(\lang^\PA):\dt \forall \mathcal{U} \in \Uni\dt \big( \Mod(\mathcal{U}, \gquote{\sigma^\mathbb{N}}) \leftrightarrow \sigma^\mathbb{N} \big) \\
\textnormal{\sf Arithmetic Compositionality} & \forall \mathcal{U} \in \Uni\dt \forall \phi \in \lang_{\Uni, \Mod} \dt \big( \Mod(\mathcal{U}, \gquote{\forall x \in \mathbb{N}} \phi) \leftrightarrow \\ 
& \forall n \in \mathbb{N} \dt \Mod(\mathcal{U}, \phi, \gquote{x} \mapsto \underline{n}^\mathcal{U}) \big) \\
\end{array}
\]
\end{figure}

\begin{prop}\label{Prop: Arithm comp implies Arithm abs}
$\CM^- + \textnormal{\sf Arithmetic Compositionality} \vdash \textnormal{\sf Arithmetic Absoluteness}$
\end{prop}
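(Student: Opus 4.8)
The plan is to strengthen the claim to open formulas and argue by a meta-level induction on the structure of $\lang^\PA$-formulas, using $\textnormal{\sf Arithmetic Compositionality}$ only at the quantifier step and the compositional axioms of $\CM^-$ elsewhere. Concretely, I would prove in $\CM^- + \textnormal{\sf Arithmetic Compositionality}$ that for every $\lang^\PA$-formula $\phi(x_1,\dots,x_k)$,
\[
\forall \mathcal{U} \in \Uni \dt \forall n_1,\dots,n_k \in \mathbb{N} \dt \Big( \Mod\big(\mathcal{U}, \gquote{\phi^\mathbb{N}}, \langle \gquote{x_1},\dots,\gquote{x_k}\rangle \mapsto \langle \underline{n_1}^\mathcal{U},\dots,\underline{n_k}^\mathcal{U}\rangle\big) \leftrightarrow \phi^\mathbb{N}(n_1,\dots,n_k) \Big),
\]
of which $\textnormal{\sf Arithmetic Absoluteness}$ is essentially the case $k=0$.

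The connective cases are routine: since $(\neg\phi)^\mathbb{N}$ is $\neg(\phi^\mathbb{N})$ and $(\phi\wedge\psi)^\mathbb{N}$ is $\phi^\mathbb{N}\wedge\psi^\mathbb{N}$, I would invoke $\CM_\neg$ and $\CM_\wedge$ (and the derived $\CM_\vee$, $\CM_\rightarrow$) together with the induction hypothesis. The quantifier case is where the hypothesis $\textnormal{\sf Arithmetic Compositionality}$ does the essential work: because $(\forall x\,\phi)^\mathbb{N}$ is $\forall x \in \mathbb{N}\,\phi^\mathbb{N}$, the axiom lets me strip the leading quantifier, turning $\Mod(\mathcal{U}, \gquote{(\forall x\,\phi)^\mathbb{N}}, \cdots)$ into $\forall n \in \mathbb{N}\dt \Mod(\mathcal{U}, \gquote{\phi^\mathbb{N}}, \cdots)$ with $\gquote{x}$ now assigned $\underline{n}^\mathcal{U}$; the induction hypothesis then identifies this with $\forall n \in \mathbb{N}\,\phi^\mathbb{N}(\dots,n)$. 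The existential (i.e.\ $\exists x \in \mathbb{N}$) case follows by combining this equivalence with $\CM_\neg$.

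The base case — atomic equations $t_1 = t_2$ of $\lang^\PA$ — is where I expect the real work to lie, and I would treat it as the main obstacle. Here I would first establish a term-evaluation lemma: for each $\lang^\PA$-term $t(\vec{x})$ and standard $\vec{n}$, the value of $t^\mathbb{N}$ computed inside $\mathcal{U}$ under the assignment $\gquote{x_i} \mapsto \underline{n_i}^\mathcal{U}$ is the standard number $\underline{m}^\mathcal{U}$, where $m = t^\mathbb{N}(\vec{n})$ is the background value. Successor is immediate, since $\underline{n+1}^\mathcal{U}$ is by definition $S^\mathcal{U}(\underline{n}^\mathcal{U})$; for $+$ and $\times$ one argues by a side induction using their recursion equations. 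Granted this lemma, the atomic case closes via $\CM_=$: the relevant $\Mod$-statement holds iff $\underline{m_1}^\mathcal{U} = \underline{m_2}^\mathcal{U}$, which — since $n \mapsto \underline{n}^\mathcal{U}$ is injective on the standard numbers — holds iff $m_1 = m_2$, i.e.\ iff $(t_1 = t_2)^\mathbb{N}(\vec{n})$. The delicate point, and the reason I single this step out, is that it forces me to justify that each $\mathcal{U} \in \Uni$ interprets the arithmetic symbols \emph{absolutely on its standard numbers}: $\textnormal{\sf Arithmetic Compositionality}$ guarantees that the internal $\mathbb{N}$ of $\mathcal{U}$ is exactly $\{\underline{n}^\mathcal{U} : n \in \mathbb{N}\}$, and I would use this together with the recursion equations governing the arithmetic symbols to pin down their values there; making this rigorous is the part that demands genuine attention, rather than the surrounding recursion.

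Finally, to recover $\textnormal{\sf Arithmetic Absoluteness}$ from the open-formula statement I take $k=0$: for a sentence $\sigma$ the formula $\sigma^\mathbb{N}$ has no free variables, so $\Mod(\mathcal{U}, \gquote{\sigma^\mathbb{N}}, f)$ is independent of $f$ (a standard consequence of the compositional axioms). Hence $\Mod(\mathcal{U}, \gquote{\sigma^\mathbb{N}})$, which unfolds to $\forall f \in \VA^\mathcal{U}\dt \Mod(\mathcal{U}, \gquote{\sigma^\mathbb{N}}, f)$, reduces to the $k=0$ instance and yields $\Mod(\mathcal{U}, \gquote{\sigma^\mathbb{N}}) \leftrightarrow \sigma^\mathbb{N}$ for all $\mathcal{U} \in \Uni$, as required.
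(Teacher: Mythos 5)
Your proposal follows essentially the same route as the paper's proof: a meta-level induction on the structure of $\lang^\PA$-formulas, with the compositional axioms of $\CM^-$ handling the connectives and $\textnormal{\sf Arithmetic Compositionality}$ handling the bounded quantifier. The only cosmetic difference is that you carry free variables and assignments $\gquote{x_i} \mapsto \underline{n_i}^\mathcal{U}$ through the induction, whereas the paper inducts on sentences and substitutes numerals (passing from $\forall x \in \mathbb{N} \dt \phi(x)$ to the instances $\phi(\underline{n})$) at the quantifier step; your formulation actually matches the statement of $\textnormal{\sf Arithmetic Compositionality}$ more closely, since the paper's substitution version silently needs the same numeral-evaluation facts that you make explicit.

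The real divergence is the atomic case, and here your instinct about where the work lies is sound, but your sketched resolution does not close the gap. The paper disposes of atomic sentences in one line --- ``it follows from that arithmetic equations are decidable'' --- the intended argument presumably being that a true (respectively false) closed equation is provable (respectively refutable) from the basic arithmetic axioms, so that deductive closure of $\{\phi \mid \Mod(\mathcal{U},\phi)\}$ (the Soundness Lemma, Lemma \ref{Lemma: CM soundness}) together with $\CM_\neg$ settles it correctly --- \emph{provided} the universe's $\Mod$-theory contains those arithmetic axioms. That proviso is exactly the ``delicate point'' you flag: nothing in $\CM^-$ plus $\textnormal{\sf Arithmetic Compositionality}$ constrains how a universe interprets $S$, $+$ and $\times$, since $\CM_=$ speaks only of variables and $\textnormal{\sf Arithmetic Compositionality}$ only identifies the internal $\mathbb{N}$ with the standard numerals at the level of quantification. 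Consequently the ``recursion equations governing the arithmetic symbols'', which your term-evaluation lemma needs to hold inside $\mathcal{U}$, are not derivable from the stated hypotheses; they would follow from $\textnormal{\sf Multiverse}_\ZF$ via the Soundness Lemma, i.e., over $\CM$ rather than $\CM^-$. In short, your proof is the paper's proof carried out more carefully, and the step you single out as incomplete is genuinely incomplete --- but it is equally a lacuna in the paper's own one-line treatment, which suggests the proposition should either be read over $\CM$, or be supplemented by an assumption securing absoluteness of atomic arithmetic in the universes.
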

\begin{proof}
Let $\sigma \in \sent(\lang^\PA)$, such that $\sigma^\mathbb{N}$ holds. Let $\mathcal{U} \in \Uni$. We show by induction on the syntactic structure that $\Mod(\mathcal{U}, \gquote{\sigma^\mathbb{N}}) \leftrightarrow \sigma^\mathbb{N}$. For atomic sentences it follows from that arithmetic equations are decidable. For the propositional connectives, the induction step follows from the axioms of the form $\CM_-$; let us look at $\sigma^\mathbb{N} \equiv \neg\phi$ for example:
\[ 
\Mod(\mathcal{U}, \gquote{\neg\phi}) \iff \neg \Mod(\mathcal{U}, \gquote{\phi}) \iff \neg\phi 
\]
The first equivalence holds by $\CM_\neg$ and the second by the induction hypothesis. For the quantifier case, suppose that $\sigma^\mathbb{N} \equiv \forall x \in \mathbb{N} \dt \phi(x)$. We calculate:
\begin{align*}
\Mod(\mathcal{U}, \gquote{\forall x \in \mathbb{N} \dt \phi(x)}) \iff \forall n \in \mathbb{N} \dt \Mod(\mathcal{U}, \gquote{\phi(\underline{n})}) \iff \forall n \in \mathbb{N} \dt \phi(\underline{n}) \iff \sigma^\mathbb{N}
\end{align*}
The first equivalence holds by Arithmetic Compositionality, the second by the induction hypothesis, and the third by the fact that for all $n \in \mathbb{N}$, $\mathbb{N} \models n = \underline{n}$. 
\end{proof}

The following proposition shows the reflective power of Arithmetic Absoluteness, and exhibits a scenario where $\CONEC$ is useful.

\begin{prop}\label{Prop: Arithm abs + CONEC implies Iterated Reflection}
$\CM + \textnormal{\sf $\Sigma^\PA_1$-Absoluteness} +\CONEC \vdash \RTh^{\omega^\mathrm{CK}_1}$
\end{prop}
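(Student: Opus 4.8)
The plan is to reduce the proposition, via $\CONEC$, to a purely model-theoretic statement about universes, and then to establish that statement by transfinite induction, using $\textnormal{\sf $\Sigma^\PA_1$-Absoluteness}$ exactly where a classical soundness argument would use $\omega$-standardness. Recall that $\RTh^{\omega^{\mathrm{CK}}_1} = \bigcup_{\alpha<\omega^{\mathrm{CK}}_1}\RTh^\alpha$, so every axiom $\theta$ of $\RTh^{\omega^{\mathrm{CK}}_1}$ is either an axiom of $\ZF$ or a reflection instance $\Prv_{\udot{\RTh^\alpha}}(\gquote\phi)\rightarrow\phi$ for some recursive $\alpha<\omega^{\mathrm{CK}}_1$ and some $\phi\in\lang$, and hence is an axiom of some $\RTh^{\alpha+1}$. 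To prove the schema outright it suffices to prove each $\theta$; and by $\CONEC$ it suffices to prove $\Tr^\Box(\gquote\theta)$, that is, $\forall\mathcal{U}\in\Uni\dt\Mod(\mathcal{U},\gquote\theta)$, within $\CM + \textnormal{\sf $\Sigma^\PA_1$-Absoluteness}$. The whole proposition therefore reduces to the following Claim, to be proved for each recursive $\alpha<\omega^{\mathrm{CK}}_1$:
\[
\CM + \textnormal{\sf $\Sigma^\PA_1$-Absoluteness} \vdash \forall\mathcal{U}\in\Uni\dt\big(\forall\sigma\in\udot{\RTh^\alpha}\dt\Mod(\mathcal{U},\sigma)\big),
\]
i.e.\ every universe internally satisfies every axiom of $\RTh^\alpha$.

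I would prove the Claim by transfinite induction on $\xi\leq\alpha$, carried out inside $\CM$ along the recursive notation for $\alpha$; this induction is legitimate because $\CM$ contains $\Sep(\lang_{\Uni,\Mod})$ and hence proves transfinite induction for $\lang_{\Uni,\Mod}$-formulas along set well-orders, just as in Proposition \ref{Prop: CT induction}. Fix $\mathcal{U}\in\Uni$. The base case $\xi=0$ is $\mathcal{U}\models\ZF$, which is $\textnormal{\sf Multiverse}_\ZF$, an axiom of $\CM$; the limit case is immediate since $\RTh^\xi$ is then the union of the earlier theories. The successor step is the heart of the matter: assuming $\forall\sigma\in\udot{\RTh^\xi}\dt\Mod(\mathcal{U},\sigma)$, fix $\phi\in\udot\lang$; using the derivable axiom $\CM_\rightarrow$ it suffices, under the hypothesis $\Mod(\mathcal{U},\gquote{\Prv_{\udot{\RTh^\xi}}(\gquote\phi)})$, to derive $\Mod(\mathcal{U},\gquote\phi)$. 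Now $\Prv_{\udot{\RTh^\xi}}(\gquote\phi)$ is of the form $\tau^\mathbb{N}$ for a $\Sigma^\PA_1$ sentence $\tau$, since provability in $\RTh^\xi$ is a $\Sigma^0_1$ arithmetic predicate; hence $\textnormal{\sf $\Sigma^\PA_1$-Absoluteness}$ gives $\Mod(\mathcal{U},\gquote{\Prv_{\udot{\RTh^\xi}}(\gquote\phi)})\leftrightarrow\Prv_{\udot{\RTh^\xi}}(\gquote\phi)$, so the hypothesis yields $\Prv_{\udot{\RTh^\xi}}(\gquote\phi)$ outright in the background, i.e.\ an internal derivation of $\phi$ from premises in $\udot{\RTh^\xi}$. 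By the inductive hypothesis all such premises satisfy $\Mod(\mathcal{U},\cdot)$, so by the Soundness Lemma (Lemma \ref{Lemma: CM soundness}), which establishes deductive closure of $\{\psi\mid\Mod(\mathcal{U},\psi)\}$ even along internal, possibly nonstandard-length derivations, we obtain $\Mod(\mathcal{U},\gquote\phi)$. This gives $\mathcal{U}\models\RTh^{\xi+1}$ and completes the induction, hence the Claim and the proposition.

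The conceptual point worth stressing is that $\textnormal{\sf $\Sigma^\PA_1$-Absoluteness}$ converts a universe's internal belief in a $\Sigma^0_1$ provability fact into a genuine background derivation, which the Soundness Lemma then transports back into the universe; this is precisely the role $\omega$-standardness plays in the usual soundness argument for reflection, and it is what makes the inductive step go through without any assumption on the standard system or well-foundedness of the universes. I expect the main obstacle to be bookkeeping rather than conceptual: one must fix a recursive system of ordinal notations and verify that the assignment $\xi\mapsto\udot{\RTh^\xi}$ and the predicates $\Prv_{\udot{\RTh^\xi}}$ are given uniformly by $\Sigma^0_1$ formulas along that notation, so that (i) the transfinite induction is formalizable inside $\CM$ as an induction along a genuine set well-order, and (ii) $\textnormal{\sf $\Sigma^\PA_1$-Absoluteness}$ genuinely applies at each successor step. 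Granting the standard theory of iterated reflection along recursive ordinals, these uniformities are routine, and the argument above then delivers the result.
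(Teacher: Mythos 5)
Your proposal is correct and matches the paper's own proof essentially step for step: both reduce via $\CONEC$ to showing every universe satisfies the iterated reflection axioms, and both run a transfinite induction whose base case is $\textnormal{\sf Multiverse}_\ZF$, whose successor step uses $\textnormal{\sf $\Sigma^\PA_1$-Absoluteness}$ to convert a universe's internal provability claim $\Mod(\mathcal{U},\gquote{\Prv_{\udot{\RTh}^\alpha}(\sigma)})$ into a background derivation and then the Soundness Lemma together with the induction hypothesis $\Tr^\Box(\udot{\RTh}^\alpha)$ to recover $\Mod(\mathcal{U},\gquote{\sigma})$, and whose limit case is immediate. Your extra remarks --- carrying out the induction inside $\CM$ via $\Sep(\lang_{\Uni,\Mod})$ in the style of Proposition \ref{Prop: CT induction}, and the uniformity requirements on the ordinal notation system --- only make explicit bookkeeping that the paper's proof leaves implicit.
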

\begin{proof}
By $\CONEC$ it suffices to prove $\Tr^\Box(\udot{\RTh}^{\omega^\mathrm{CK}_1})$. Naturally, we do so by transfinite induction. The base case follows from $\textnormal{\sf Multiverse}_\ZF$, an axiom of $\CM$. For the successor case, assume that $\Tr^\Box(\udot{\RTh}^\alpha)$ for some $\alpha < \omega^\mathrm{CK}_1$. Suppose $\mathcal{U} \in \Uni$, let $\sigma \in \lang$ and assume that $\Mod(\mathcal{U}, \gquote{\Pr_{\udot{\RTh}^\alpha}(\sigma)})$. Since $\alpha$ is recursive, $\Pr_{\udot{\RTh}^\alpha}$ is $\Sigma^0_1$, so by $\Sigma^\PA_1$-Absoluteness, we have $\Pr_{\udot{\RTh}^\alpha}(\sigma)$. It now follows from $\Tr^\Box(\udot{\RTh}^\alpha)$ and the Soundness Lemma that $\Mod(\mathcal{U}, \gquote{\sigma})$. So by $\CM_\rightarrow$, we have $\Mod(\mathcal{U}, \udot{\RTh}^{\alpha+1})$, as desired. The limit case is immediate from the definition of $\RTh^\alpha$. 
\end{proof}

\begin{rem*} Note that the above proof argues model-theoretically on an arbitrary universe. But even though $\CM + \textnormal{\sf $\Sigma^\PA_1$-Absoluteness} +\CONEC$ proves a fair amount of reflection, it is not clear to the authors whether it proves that there is a universe ({\sf Non-Triviality}).
\end{rem*}

This section raises questions about the consistency (strength) of combinations of Copernican multiverse theories and axioms of arithmetic absoluteness, in particular:

\begin{que*}
Is $\CM + \textnormal{\sf $\Sigma^\PA_1$-Absoluteness} +\CONEC$ consistent relative to $\RTh^{\omega^\mathrm{CK}_1}$? 
\end{que*}
\begin{que*}
Is $\CM^-(\lang_{\iota, \self}) + \NEC + \textnormal{\sf Self-Perception} + \textnormal{\sf Arithmetic Compositionality}$ consistent?
\end{que*}

The following Proposition may be viewed as a partial answer to the second question, but the authors do not consider it to suggest an ultimately negative answer:

\begin{prop}\label{Prop: Arithmetic compositionality gives omega-inconsistency}
The system $\CM^- + \NEC + \textnormal{\sf Arithmetic Compositionality} + \textnormal{\sf Non-Triviality}$ is $\omega$-inconsistent.
\end{prop}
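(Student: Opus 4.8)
The plan is to adapt McGee's paradox to the present setting, exactly as the excerpt's proof that $\GR^\omega$ and $\FS$ are $\omega$-inconsistent does, but with $\Tr^\Box$ in the role of the truth predicate. The ingredient that \textnormal{\sf Arithmetic Compositionality} supplies is that $\Tr^\Box$ commutes with numerical universal quantification. So first I would show that $\CM^- + \textnormal{\sf Arithmetic Compositionality}$ proves, for every $\phi(x) \in \lang_{\Uni, \Mod}$,
\[
\Tr^\Box(\gquote{\forall x \in \mathbb{N} \dt \phi(x)}) \leftrightarrow \forall n \in \mathbb{N} \dt \Tr^\Box(\gquote{\phi(\underline{n})}) \qquad (\star)
\]
This is obtained by applying \textnormal{\sf Arithmetic Compositionality} inside each $\mathcal{U} \in \Uni$, using the substitution lemma for $\Mod$ (so that $\Mod(\mathcal{U}, \gquote{\phi(x)}, \gquote{x} \mapsto \underline{n}^\mathcal{U}) \leftrightarrow \Mod(\mathcal{U}, \gquote{\phi(\underline{n})})$), and then commuting the two universal quantifiers $\forall \mathcal{U} \in \Uni$ and $\forall n \in \mathbb{N}$.

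Next I would set up the iteration machinery by a simultaneous diagonalization. Let $b$ be the primitive recursive function with $b(0) = \gquote{\gamma}$ and $b(x+\underline{1}) = $ the code of the sentence $\Tr^\Box(\underline{b(x)})$, and let $\Phi(x)$ abbreviate $\Tr^\Box(\udot{b}(x))$, where the sentence $\gamma$ is produced by the fixed-point lemma so that $\CM^- \vdash \gamma \leftrightarrow \exists v \in \mathbb{N} \dt \neg\Phi(v)$; equivalently $\neg\gamma \leftrightarrow \rho'$, where $\rho' := \forall v \in \mathbb{N} \dt \Phi(v)$. The whole point of defining $b$ by this exact recursion is that $\gquote{\Phi(\underline{x})} = \udot{b}(x + \underline{1})$ holds provably, so that $\Phi(\underline{n})$ is literally the $(n{+}1)$-fold iterate of $\Tr^\Box$ applied to $\gamma$ and $(\star)$ can be applied on the nose, rather than merely up to provable equivalence.

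The core computation is to prove $\vdash \gamma$. Applying $(\star)$ to $\rho'$ and using the recursion identity $\gquote{\Phi(\underline{v})} = \udot{b}(v + \underline{1})$ gives $\Tr^\Box(\gquote{\rho'}) \leftrightarrow \forall v \in \mathbb{N} \dt \Phi(v + \underline{1})$, whence, splitting off the $v = \underline{0}$ conjunct, $\rho' \leftrightarrow \Phi(\underline{0}) \wedge \Tr^\Box(\gquote{\rho'})$ with $\Phi(\underline{0}) = \Tr^\Box(\gquote{\gamma})$. From $\vdash \rho' \leftrightarrow \neg\gamma$ I would pass, via $\NEC$ and $\CM^\Box_\leftrightarrow$, to $\vdash \Tr^\Box(\gquote{\rho'}) \leftrightarrow \Tr^\Box(\gquote{\neg\gamma})$, obtaining $\neg\gamma \leftrightarrow \big(\Tr^\Box(\gquote{\gamma}) \wedge \Tr^\Box(\gquote{\neg\gamma})\big)$. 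Since \textnormal{\sf Non-Triviality} yields $\CM^\Box_\neg$ (so that $\Tr^\Box(\gquote{\neg\gamma}) \rightarrow \neg\Tr^\Box(\gquote{\gamma})$), the right-hand side is refutable, and hence $\vdash \gamma$.

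Finally I would read off $\omega$-inconsistency with witness $\uranus(v) := v \in \mathbb{N} \wedge \neg\Phi(v)$: the clause $\uranus(x) \rightarrow x \in \mathbb{N}$ is immediate; $\vdash \exists v \dt \uranus(v)$ is just $\vdash \gamma$; and for each standard $n$, applying $\NEC$ exactly $n{+}1$ times to $\vdash \gamma$ gives $\vdash \Phi(\underline{n})$, i.e. $\vdash \neg\uranus(\underline{n})$. I expect the main obstacle to be the bookkeeping of this self-referential recursion: $b$ and $\gamma$ must be defined together so that a single $\Tr^\Box$ shifts the iteration index by exactly one \emph{as a provable identity of Gödel codes}, for otherwise $(\star)$ only delivers box-commutation up to provable equivalence, which cannot be invoked uniformly under the internal quantifier $\forall v \in \mathbb{N}$ and the argument breaks down at precisely the step where McGee's paradox is usually most delicate.
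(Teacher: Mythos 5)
Your proposal follows exactly the route the paper intends: the paper's own proof of this proposition is nothing more than the citation ``this is a corollary of McGee's paradox,'' to be unpacked along the lines of its earlier $\omega$-inconsistency proof for $\GR^\omega$ and $\FS$ (Halbach's Theorem 13.9 adapted from truth over $\PA$ to satisfaction over $\ZF$). Your skeleton --- the commutation principle $(\star)$ extracted from {\sf Arithmetic Compositionality}, the simultaneous diagonalization producing $b$ and $\gamma$, the derivation of $\vdash \gamma$ via $\NEC$, $\CM^\Box_\leftrightarrow$ and $\CM^\Box_\neg$ (the last requiring {\sf Non-Triviality}), and the read-off of the witness $\uranus(v)$ by iterating $\NEC$ on $\vdash \gamma$ --- is precisely that unpacking, and the core computation deriving $\vdash \gamma$ is correct as written.

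There is, however, one concrete defect, and it sits exactly at the point you yourself single out as critical. You define $b(x+\underline{1})$ to be the code of $\Tr^\Box(\underline{b(x)})$, i.e.\ of $\Tr^\Box$ applied to the \emph{numeral of the value} $b(x)$, whereas $\Phi(\underline{x})$ is the formula $\Tr^\Box(\udot{b}(\underline{x}))$, i.e.\ $\Tr^\Box$ applied to the \emph{function term} $\udot{b}(\underline{x})$. These are syntactically distinct sentences, so the identity $\gquote{\Phi(\underline{x})} = \udot{b}(x+\underline{1})$ that you claim ``holds provably'' in fact fails: with your $b$ the two sides are provably distinct codes. And, as you correctly argue, in $\CM^-$ this cannot be papered over, because the universes of $\CM^-$ are not required to satisfy any arithmetic (there is no $\mathsf{Multiverse}_\ZF$, and the compositional axioms say nothing about terms or about atomic formulas other than equality); hence the background-provable equivalence between the two differently coded sentences cannot be pushed underneath the outer $\Tr^\Box$ uniformly in the internally quantified $v$ --- $\NEC$ together with $\CM^\Box_\leftrightarrow$ repairs it only instance-by-instance for standard $v$, which is useless inside $\forall v \in \mathbb{N}$, so the step $\Tr^\Box(\gquote{\rho'}) \leftrightarrow \forall v \in \mathbb{N} \dt \Phi(v+\underline{1})$ breaks. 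The repair is standard and is what the Halbach/McGee argument actually does: define $b$ by the recursion theorem (the definition of $b$ may refer to its own symbol $\udot{b}$) so that $b(x+\underline{1}) = \gquote{\Tr^\Box(\udot{b}(\underline{x}))} = \gquote{\Phi(\underline{x})}$ holds on the nose; with that single change your derivation of $\vdash \gamma$ and the $\NEC$-iteration go through verbatim. A smaller point of the same flavour: the paper states {\sf Arithmetic Compositionality} with assignments, $\Mod(\mathcal{U},\phi,\gquote{x}\mapsto\underline{n}^\mathcal{U})$, and your $(\star)$ converts this into substituted numerals $\Tr^\Box(\gquote{\phi(\underline{n})})$ by appeal to a ``substitution lemma for $\Mod$''; in $\CM^-$ such a lemma is not free either, for the same underlying reason, though on this point you are no less rigorous than the paper itself.
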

\begin{proof}
This is a corollary of McGee's paradox, see \cite{McG85}.
\end{proof}

\subsection{The Hamkins multiverse}\label{Subsec: Hamkins's multiverse}

\cite{Ham12} introduced a conception of the set-theoretic multiverse, which informally is based on four over-arching principles:
\begin{enumerate}
\item The multiverse is a non-empty collection of models of $\ZFC$.
\item The multiverse is closed under the usual techniques for constructing models of set theory from other models of set theory, such as forcing extensions and inner models.
\item Every universe is countable and $\omega$-non-standard from the perspective of another universe.
\item The multiverse is closed under iterating large cardinal embeddings backward.
\end{enumerate}

Definition 1.1 in \cite[pp. 475--6]{GH10} gives succinct formulations of axioms encapsulating this conception. The third (and possibly the fourth) principle is more controversial than the first two. For example, the Well-foundedness Mirage axiom states that for every universe $\mathcal{U}$ there is a universe $\mathcal{V}$ which thinks $\mathcal{U}$ is $\omega$-non-standard. 

Philsophical argmuents in support for the axioms are provided in \cite[\S 9]{Ham12}; and a model of them is constructed by \cite{GH10}, essentially taking the multiverse to consist of the countable recursively saturated models of $\ZFC$.  We call this the {\em Gitman-Hamkins} model of the multiverse.

\cite{GH10} consider a weak and strong form of Well-foundedness Mirage. In the terminology of this paper, these are formally stated as follows:
\[
\begin{array}{ll}
\textnormal{\sf WM}_\textnormal{\sf weak} & \forall \mathcal{U} \in \Uni\dt \exists \mathcal{V} \in \Uni\dt \exists u \in \mathcal{V} \dt \big( u_\mathcal{V} = \mathcal{U} \wedge {} \\ 
& \quad {} \wedge \mathcal{V} \models \text{``$u$ is $\omega$-non-standard''}\big) \\
\textnormal{\sf WM}_\textnormal{\sf strong} & \forall \mathcal{U} \in \Uni\dt \exists \mathcal{V} \in \Uni\dt \exists u \in \mathcal{V} \dt \big( u_\mathcal{V} = \mathcal{U} \wedge {} \\ 
& \quad {} \wedge \mathcal{V} \models \text{``$u$ is an $\omega$-non-standard model of $\ZFC$''}\big)
\end{array}
\]

The distinction between the axioms is discussed in \cite[pp. 479-480]{GH10}, where a reflection assumption is introduced to ensure that the stronger axiom gets validated in the model. Their multiverse conception is flat in the sense that it does not consider the universes as themselves being models of the multiverse axioms. However, the move from $\textnormal{\sf WM}_\textnormal{\sf weak}$ to $\textnormal{\sf WM}_\textnormal{\sf strong}$ may naturally be viewed as a step in that direction. Accordingly, we may reformulate Well-founded Mirage in $\lang_{\Uni, \Mod}$ as:
\[
\begin{array}{ll}
\textnormal{\sf WM} & \forall \mathcal{U} \in \Uni\dt \exists \mathcal{V} \in \Uni\dt \exists u \in \mathcal{V} \dt \big( u_\mathcal{V} = \mathcal{U} \wedge {} \\ 
& \quad {} \wedge \Mod(\mathcal{V}, \text{``$u$ is an $\omega$-non-standard model in $\Unib$''}) \big)
\end{array}
\]
Informally speaking this says not only ``for every universe $\mathcal{U}$ there is a universe $\mathcal{V}$ that thinks that $\mathcal{U}$ is $\omega$-non-standard'', but also ``$\mathcal{V}$ thinks that $\mathcal{U}$ is a universe''. Note that over $\CM + \textnormal{\sf Choice} + \NEC$,\footnote{Choice is added here because Hamkins's conception of the multiverse is formulated for $\ZFC$.} we get $\textnormal{\sf WM}_\textnormal{\sf strong}$ from $\textnormal{\sf WM}$, and also iterated forms of $\textnormal{\sf WM}$, starting with $\forall \mathcal{W} \in \Uni\dt \Mod(\mathcal{W}, \textnormal{\sf WM})$. The authors take this to be a natural way for Well-founded Mirage to manifest in the multiverse of sets. The analogous modification can also be made to the Countability axiom in Definition 1.1 of \cite{GH10}. 

We write $\HM$ (the Hamkins Multiverse) for the $\lang_{\Uni, \Mod}$-theory obtained by extending $\CM$ with the axioms of Definition 1.1 in \cite{GH10} reformulated so that $\models$ is replaced by $\Mod$ and the Well-founded Mirage and Countability axioms are modified as above. In general, it is natural to add the closure condition $\NEC$ to this system, since that gives us a Copernican conception which ensures that the backgound universe does not have a privileged point of outlook over the multiverse. In particular, this yields a natural strengthening of Well-founded Mirage (and Countability), as explained in the previous paragraph. Moreover, the techniques used to validate {\sf Self-Perception} in this paper are closely related to the validation of Well-founded Mirage by \cite{GH10}. Therefore, the authors champion $\CM(\lang_{\iota, \self}) + \HM + \textnormal{\sf Self-Perception}$ as a theory of the multiverse. 

Note that the choice of $\Unib_n$ in the proof of Theorem \ref{Thm: CM + NEC + self-iso} is the collection of countable recursively saturated models, just as in the Gitman-Hamkins model of the multiverse. A plausible approach to proving the consistency of the above theory is therefore to use the construction from the proof of Theorem \ref{Thm: CM + NEC + self-iso}, setting up the revision parameters $\Trm, \Unirm, \Modrm_0$ so that for each $n \in \mathbb{N}$:
\begin{align*}
\Tb_n &= \SP^n(\ZFC) \\
\Unib_{n+1} &= \{\mathcal{U} \mid \mathcal{U} \models \Tb_{n} \wedge \mathcal{U}\restr_\lang \in \crsmb\}
\end{align*}

\begin{conj*}
$\GR^\omega(\ZFC)$ interprets $\CM(\lang_{\iota, \self}) + \NEC + \HM + \textnormal{\sf Self-Perception}$.
\end{conj*}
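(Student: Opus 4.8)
The plan is to mirror the proof of Theorem \ref{Thm: CM + NEC + self-iso}, replacing $\ZF$ by $\ZFC$ throughout and enlarging the theory $S$ fed to the Main Lemma so as to also carry the Hamkins axioms. Concretely, I would fix $\lang^\Trm = \lang_{\Sat, \iota, \self}$ and $\lang^\Rev = \lang_{\iota, \self}$ and choose revision parameters $\Trm, \Unirm, \Modrm_0$ with $\Tb_n = \SP^n(\ZFC)$ and $\Unib_{n+1} = \{\mathcal{U} \mid \mathcal{U} \models \Tb_n \wedge \mathcal{U}\restr_\lang \in \crsmb\}$, exactly as suggested preceding the conjecture. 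Since $\SP^0(\ZFC) \vdash \ZFC \vdash \ZF$, the hypothesis $\Tb_B \vdash \ZF$ of Corollary \ref{Cor: T interprets MS} holds with $B=0$, and these are revision parameters provably satisfying $\textnormal{\sf Soundness}^*$; hence they admit $\NEC^*$ by Lemma \ref{Lemma: Admissible rules on revision parameters}(a). As in Theorem \ref{Thm: CM + NEC + self-iso}, each $\Tb_k$ proves $\Unirm_{\underline{k}}(\self)$ (because $\SP^k(\ZFC)$ contains $\self \in \crsmb$, $\Iso(\self)$ and $\self \models \udot{\SP}^{\underline{k-1}}(\udot{\ZFC})$), so $\Tb_k \vdash \mathcal{I}_{\underline{k}}(\textnormal{\sf Self-Perception})$ for every $k$. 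This reduces the conjecture to showing that each axiom $\phi$ of $\HM$ is validated by the revision construction, i.e. that for each such $\phi$ there is $A$ with $\Tb_k \vdash \mathcal{I}_{\underline{k}}(\phi)$ for all $k \geq A$; given this, Corollary \ref{Cor: T interprets MS} yields a local interpretation of $\CM(\lang_{\iota,\self}) + \NEC + \HM + \textnormal{\sf Self-Perception}$ in $\SP^\omega(\ZFC)$.

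The second half of the argument is the collapse from a family of interpretations to a single one. I would establish the $\ZFC$-analogue of Lemma \ref{Lemma: GR^omega interprets SP^omega}, namely that $\GR^\omega(\ZFC)$ interprets $\SP^\omega(\ZFC)$; its proof transfers verbatim, using that $\GR^\omega(\ZFC)$ is still $\omega$-inconsistent (McGee's paradox) and that the expansion Lemma \ref{Lem: model of CT expands to Iso} holds with $\ZFC$ in place of $\ZF$ (the underlying canonicity result Theorem \ref{Thm: rec sat iso} only needs $\ZF$, which $\ZFC$ extends). Composing the local interpretation with this interpretation $\mathcal{J}$, and then running the $\uranus$-based selection of a nonstandard-but-standardly-large revision index exactly as at the end of the proof of Theorem \ref{Thm: CM + NEC + self-iso} (defining the single interpretation $\mathcal{K}_\psi$), converts the family $\{\mathcal{K}_{\underline{k}} \mid k<\omega\}$ into one interpretation of the whole theory in $\GR^\omega(\ZFC)$. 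This part is essentially bookkeeping once the validation of $\HM$ is in hand.

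The heart of the proof, and the step I expect to be the main obstacle, is the validation of the Hamkins axioms, carried out inside a model of $\Tb_k = \SP^k(\ZFC)$ where the universes are the countable recursively saturated models of $\SP^{k-1}(\ZFC)$. The axiom $\mathsf{Multiverse}_{\ZFC}$ is immediate from the choice of $\Unib_n$, and for the forcing- and inner-model-closure axioms I would appeal to the Gitman--Hamkins analysis \cite{GH10}, which shows the class of countable recursively saturated models of $\ZFC$ is closed under these operations. The delicate point is that our universes must satisfy the full self-perceiving, globally-reflecting theory $\SP^{k-1}(\ZFC)$ in the expanded language $\lang_{\Sat,\iota,\self}$, not merely $\ZFC$. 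The clean way to handle this is to produce each required witness $\mathcal{V}$ so that its $\lang$-reduct is a $\crsm$ of $\GR^{k-1}(\ZFC)$ and then re-expand it to a model of $\SP^{k-1}(\ZFC)$ via (the $\ZFC$-analogue of) Lemma \ref{Lem: model of CT expands to Iso}; the genuine work is thus to check that the Gitman--Hamkins constructions preserve ``being a $\crsm$ of $\GR^{k-1}(\ZFC)$'', i.e. that Global Reflection is inherited by forcing extensions, grounds, and the ill-founded outer models used for Well-foundedness Mirage.

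For the reformulated Well-foundedness Mirage axiom $\textnormal{\sf WM}$ there is an additional, level-sensitive subtlety: unfolding $\Modrm_{\underline{k}}(\mathcal{V}, \cdot)$ via $(\dagger)$ in Construction \ref{Constr: Multiverse semantics} shows that the clause ``$u$ is an $\omega$-non-standard model in $\Unib$'' is read inside $\mathcal{V}$ at revision level $k-1$, so the witness $\mathcal{V}$ must internally recognize the embedded copy $u$ of $\mathcal{U}$ as satisfying $\SP^{k-2}(\ZFC)$ and as being countable recursively saturated. Producing an ill-founded outer model with this reflective self-knowledge, uniformly in $k$ (so that a single constant $A$, absorbing the harmless requirement $k \geq 2$, works), is precisely the point at which the Gitman--Hamkins embedding construction must be strengthened. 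I expect this, together with the preservation of Global Reflection under forcing, to be the crux that currently leaves the statement a conjecture rather than a theorem.
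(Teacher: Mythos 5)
This statement is a \emph{conjecture} in the paper: the authors give no proof of it. The paragraph immediately preceding it contains exactly the set-up you describe --- the revision parameters $\Tb_n = \SP^n(\ZFC)$ and $\Unib_{n+1} = \{\mathcal{U} \mid \mathcal{U} \models \Tb_{n} \wedge \mathcal{U}\restr_\lang \in \crsmb\}$ --- and the authors state that what remains to be done ``is essentially to verify that the proof of the Main Theorem by \cite{GH10} generalizes from models of $\ZFC$ to models of $\SP^n(\ZFC)$.'' So your overall route (reduce via Corollary \ref{Cor: T interprets MS} to validating each axiom at cofinitely many revision levels; validate $\textnormal{\sf Self-Perception}$ from $\Tb_k \vdash \Unirm_{\underline{k}}(\self)$; transfer from $\SP^\omega(\ZFC)$ to $\GR^\omega(\ZFC)$ via the $\ZFC$-analogue of Lemma \ref{Lemma: GR^omega interprets SP^omega}; collapse the family of local interpretations to a single interpretation by the $\uranus$-argument ending the proof of Theorem \ref{Thm: CM + NEC + self-iso}) is precisely the paper's intended approach, and those parts of your argument are sound.

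However, your proposal is not a proof of the conjecture, and the gap is the one you yourself flag: nothing in the proposal establishes that for each axiom $\phi$ of $\HM$ there is an $A$ with $\Tb_k \vdash \mathcal{I}_{\underline{k}}(\phi)$ for all $k \geq A$, and this is exactly the mathematical content that separates the conjecture from Theorem \ref{Thm: CM + NEC + self-iso}. Concretely, one must show that the Gitman--Hamkins constructions --- forcing extensions, inner models, and above all the ill-founded outer models witnessing Well-foundedness Mirage --- can be arranged so that the witnessing universe $\mathcal{V}$ lands back in $\Unib_k$, i.e.\ is a countable recursively saturated model of the full theory $\SP^{k-1}(\ZFC)$ in $\lang_{\Sat,\iota,\self}$, and moreover (as your unfolding of $(\dagger)$ correctly notes) internally recognizes the embedded copy $u$ of $\mathcal{U}$ as an $\omega$-non-standard member of $\Unib$ at revision level $k-1$. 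Your suggested repair --- produce witnesses whose $\lang$-reduct is a $\crsm$ of $\GR^{k-1}(\ZFC)$ and re-expand via the $\ZFC$-analogue of Lemma \ref{Lem: model of CT expands to Iso} --- is plausible but unargued: it requires a preservation theorem to the effect that Global Reflection (or at least elementary-equivalence-with-truth, in the sense of Lemma \ref{Lemma: GR internal model}) is inherited by the relevant extensions of a $\crsm$, and no such argument is given in your proposal or in the paper. So what you have written is an accurate, well-organized reduction of the conjecture to its crux, not a resolution of it; the statement remains open.
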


Addressing this conjecture falls outside the scope of this paper. What needs to be done is essentially to verify that the proof of the Main Theorem by \cite{GH10} generalizes from models of $\ZFC$ to models of $\SP^n(\ZFC)$.

\section{Conclusion}\label{Subsec: Conclusion}

We have developed a framework of satisfaction for the multiverse of set theory, with two sides: A revision-semantic construction of an increasingly adequate definition of truth-in-a-universe, and a family of axiomatic theories validated by the revision construction. We have shown how the construction can be adjusted, by tuning the revision parameters, in order to validate various multiverse axioms. 

The basic theory of satisfaction for the multiverse is $\CM$, which extends $\ZF$ with axioms expressing that truth-in-a-universe is compositional with respect to the logical connectives and quantifiers. Adding the deductive rule $\NEC$ yields a system with the closure condition that whatever is provable in the multiverse theory also provably holds in each universe. So such a system respects the Copernican Principle that the background universe should not have a privileged point of outlook over the multiverse. Adding also the dual principle of $\CONEC$, yields the system $\MS$. $\MS$ is in a sense analogous to the Friedman--Sheard theory of truth ($\FS$), but in Theorem \ref{Thm: GL interpretation} we saw that, unlike $\FS$, it is conservative over the base theory, under the meta-theoretic assumption that $\ZF$ is closed under the {\sf Reflection rule} (which follows if $\ZF$ is $\omega$-consistent, and in particular if there is an $\omega$-standard model of $\ZF$).

The choice of $\ZF$ as base theory for our framework is important for the axiom of $\textsf{Self-Perception}$. The proof of theorem \ref{Thm: CM + NEC + self-iso} yields a model of $\CM(\lang_\Sat) + \NEC + \textsf{Self-Perception}$ satisfying that each universe is a countable recursively saturated model of $\ZF$ (under $\Mod$). As far as the authors can see, both the full Separation and Replacement schemas of $\ZF$, as well as its Foundation axiom, are needed both for the background theory and for the internal theory of each universe, due to the application of Theorem \ref{Thm: rec sat iso} in the proofs of Lemma \ref{Lemma: GR internal model} and Lemma \ref{Lem: model of CT expands to Iso}. In contrast, for the weaker extensions of $\CM$ introduced in this paper, the authors do not see any need for full $\ZF$. For example, the authors believe that most of the results of this paper hold (with minor modifications) also when taking Mac Lane set theory\footnote{This set theory is $\ZF$ minus Foundation and Replacement, and with Separation only for $\Delta_0$-formulas.}, or Kripke--Platek set theory with Infinity\footnote{This set theory is $\ZF$ minus Powerset, and with Separation and Collection (instead of Replacement) only for $\Delta_0$-formulas.}, as base theory and as theory for the internal universes. Due to the close connection between Mac Lane set theory and topoi, this suggests that the framework can be adapted to give an analogous multiverse framework for topos theory.

We explored adding axioms of a reflective character, asserting that the universe of the background multiverse theory is reflected in the multiverse. Whereas {\sf Non-Triviality} merely states the existence of a universe in the multiverse, $\textnormal{\sf Multiverse Reflection}$ can be viewed as expressing that for every formula in the base language that holds in the background universe, it holds in some universe. $\textnormal{\sf Self-Perception}$ goes as far as expressing that the background universe is isomorphic to a universe in the multiverse. These axioms were interpreted in systems of various types of iterated reflection over $\ZF$, all of which are very mild in terms of consistency strength; indeed their consistency strengths are all bounded by Morse-Kelley class theory with Global Choice (and the remark following System \ref{Sys: Reflection} indicates that they are far weaker), which in turn is far weaker than $\ZFC + \text{``there exists an inaccessible cardinal''}$. 

Having this framework available, the multiverse theorist can proceed to make use of its untyped relation of truth-in-a-universe. Apart from the light it sheds on the axioms above, a concrete value added, compared to the usual $\models$-relation, is that it makes it possible to express multiverse principles that reach arbitrarily deep into the structure of universes, universes within universes, universes within universes within universes, etc. We give the final word to Tomas Transtr\"omer, through his poem ``Romanska b\aa gar'' (translation by Robert Bly):

\begin{flushleft}
{\em Inne i dig \"oppnar sig valv bakom valv o\"andligt.\\
Du blir aldrig f\"ardig, och det \"ar som det skall.}
\end{flushleft}

\begin{flushleft}
{\em Inside you one vault after another opens endlessly. \\
You'll never be complete, and that's as it should be.}
\end{flushleft}

\section*{Acknowledgements}

This research was supported by the Swedish Research Council (VR) [2017-05111],
and by the Knut and Alice Wallenberg Foundation (KAW) [2015.0179]. The authors are grateful for the comments and suggestions of the anonymous referee, to Anton Broberg for permitting us to incorporate his argument for Lemma \ref{Lemma: Antons lemma}, and to Ali Enayat for answering our question on the relationship between {\sf Self-Perception} and recursive saturation (see \S \ref{Subsec: Multiverse principles}).


\begin{thebibliography}{9}

\bibitem[Broberg, 2021]{Bro21}
A. Broberg. Private communication (2021).

\bibitem[Chang \& Keisler, 1990]{CK90}
C. C. Chang and H. J. Keisler.
Model Theory. 
Elsevier Science Publishers (1990).

\bibitem[Enayat, 2020]{En20}
A. Enayat. {\em Condensable models of set theory.}
arXiv:1910.04029 [math.LO]

\bibitem[Friedman, 1973]{Fri73}
H. Friedman.
{\em Countable models of set theories.}
In A.R.D. Mathias and H. Rogers, eds., Cambridge Summer School in Mathematical Logic.
Springer-Verlag (1973).

\bibitem[Friedman \& Sheard, 1987]{FS87}
H. Friedman and M. Sheard. 
{\em An axiomatic approach to self-referential truth.}
Annals of Pure and Applied Logic, 33 (1987), pp. 1--21.

\bibitem[Fujimoto, 2012]{Fuj12}
K. Fujimoto. 
{\em Classes and truths in set theory.} 
Annals of Pure and Applied Logic, 163 (2012), pp. 1484--1523.

\bibitem[Gitman \& Hamkins, 2010]{GH10}
V. Gitman and J. D. Hamkins. {\em A natural model of the multiverse axioms.}
Notre Dame Journal of Formal Logic, no. 4 (2010), pp. 475--484.

\bibitem[Gorbow, 2019]{KG19}
P. K. Gorbow. {\em Rank-initial embeddings of non-standard models of set theory.} Arch. Math. Logic, 59 (2020), pp. 517--563.

\bibitem[Gupta, 1982]{Gup82}
A. Gupta. {\em Truth and paradox}, Journal of Philosophical Logic, 11 (1982), pp. 1--60. 

\bibitem[J\"ager \& Kr\"ahenb\"uhl, 2010]{JK10}
G. J\"ager, J. Kr\"ahenb\"uhl. 
{\em $\Sigma^1_1$ choice in a theory of sets and classes.} 
In Ways of Proof Theory, R. Schindler (ed.), Ontos Verlag, Frankfurt (2010), pp. 283--314.

\bibitem[Halbach, 2014]{Hal14}
V. Halbach.
Axiomatic theories of truth, Revised edition.
Cambridge University Press (2014).

\bibitem[Halbach, Leitgeb \& Welch, 2005]{HLW05}
V. Halbach, H. Leitgeb and P. Welch. {\em Possible Worlds Semantics for Predicates.} In Intensionality, R. Kahle (ed.), Lecture Notes in Logic 22, Association of Symbolic Logic, A K Peters, Wellesley (2005), pp. 20-41.

\bibitem[Hamkins, 2012]{Ham12}
J. D. Hamkins.
{\em The set-theoretic multiverse.}
The Review of Symbolic Logic, vol. 5, no. 3 (September 2012), pp. 416--449.

\bibitem[Herzberger, 1982a]{Her82a}
H. G. Herzberger. 
{\em Notes on naive semantics.} 
Journal of Philosophical Logic, 11 (1982), pp. 61--102.

\bibitem[Herzberger, 1982b]{Her82b}
H. G. Herzberger. 
{\em Naive semantics and the liar paradox.} 
Journal of Philosophy, 79 (1982), pp. 479--497. 

\bibitem[McGee, 1985]{McG85}
V. McGee. {\em How truthlike can a predicate be? A negative result.} Journal of Philosophical Logic, 14 (1985), pp. 399--410.

\bibitem[Peacock, 1998]{Pea98}
J. Peacock. Cosmological Physics. Cambridge University Press (1998). 

\bibitem[Ressayre, 1987]{Res87}
J. -P. Ressayre.
{\em Mod\`eles non Standard et Sous-Syst\`emes Remarquables de ZF.}
In Mod\`eles non Standard en arithm\'etique et th\'eorie des ensambles.
Publications math\'ematiques de l'Universit\'e Paris VII, no. 22, U.E.R. de Math\'ematiques, Paris (1987), pp. 47-147.

\bibitem[Wilmers, 1975]{Wil75}
G. M. Wilmers.
Ph.D. Thesis, Oxford (1975).

\end{thebibliography}
\end{document}